\theoremstyle{plain}
\newtheorem{thm}{Theorem}[section]
\newtheorem{cor}[thm]{Corollary}
\newtheorem{lem}[thm]{Lemma}
\newtheorem{prop}[thm]{Proposition}
\newtheorem{conjecture}[thm]{Conjecture}
\theoremstyle{definition}
\newtheorem{defi}[thm]{Definition}
\theoremstyle{remark}
\newtheorem{rem}[thm]{Remark}
\numberwithin{equation}{section}
\newcommand{\fls}{(-\Delta)^s}
\newcommand{\R}{\mathbb{R}}
\newcommand{\eps}{\varepsilon}
\newcommand{\U}{\mathcal{U}}
\newcommand{\be}{{\boldsymbol{e}}}
\newcommand{\Co}{{\mathcal{C}}}
\newcommand{\s}{\zeta}
\newcommand{\average}{{\mathchoice {\kern1ex\vcenter{\hrule height.4pt
width 6pt depth0pt} \kern-9.7pt} {\kern1ex\vcenter{\hrule
height.4pt width 4.3pt depth0pt} \kern-7pt} {} {} }}
\def\R{\mathbb{R}}
\begin{document}

\title[Stable cones in the thin one-phase problem]{Stable cones in the thin one-phase problem}

\author{Xavier Fern\'andez-Real}
\address{EPFL SB, Station 8, CH-1015 Lausanne, Switzerland}
\email{xavier.fernandez-real@epfl.ch}

\author{Xavier Ros-Oton}
\address{ICREA, Pg. Llu\'is Companys 23, 08010 Barcelona, Spain \& Universitat de Barcelona, Departament de Matem\`atiques i Inform\`atica, Gran Via de les Corts Catalanes 585, 08007 Barcelona, Spain \& Centre de Recerca Matem\`atica, Edifici C, Campus Bellaterra, 08193 Bellaterra, Spain}
\email{xros@icrea.cat}

%
%
\thanks{This work has received funding from the European Research Council (ERC) under the Grant Agreements No 721675 and No 801867. 
In addition, X. F. was supported by the SNF grant 200021\_182565 and X.R. was supported by the Swiss National Science Foundation, by the grant RED2018-102650-T funded by MCIN/AEI/10.13039/501100011033, and by the Spanish State Research Agency, through the Mar\'ia de Maeztu Program for Centers and Units of Excellence in R{\&}D (CEX2020-001084-M). We would like to thank G. Grubb for her useful comments on the topics of this paper}

\begin{abstract}
The aim of this work is to study homogeneous stable solutions to the thin (or fractional) one-phase free boundary problem. 

The problem of classifying stable (or minimal) homogeneous solutions in dimensions $n\geq3$ is completely open.
In this context, axially symmetric solutions are expected to play the same role as Simons' cone in the classical theory of minimal surfaces, but even in this simpler case the problem is open.

The goal of this paper is twofold.
On the one hand, our first main contribution is to find, for the first time, the stability condition for the thin one-phase problem. Quite surprisingly, this requires the use of ``large solutions'' for the fractional Laplacian, which blow up on the free boundary. 

On the other hand, using our new stability condition, we show that any axially symmetric homogeneous stable solution in dimensions $n\le 5$ is one-dimensional, \emph{independently} of the parameter $s\in (0,1)$. 
\end{abstract}

\maketitle

\section{Introduction}

Consider the energy functional
\begin{equation}
\label{eq.Classical}
\mathcal{J}(u) = [u]^2_{H^1(B_1)} + \big|\{u > 0\}\cap B_1\big| = \int_{B_1} \left( |\nabla u|^2+\chi_{\{u > 0\}}\right)
\end{equation}
where $\chi_A$ is the characteristic function of the set $A$. 

The study of the critical points and minimizers of \eqref{eq.Classical} is known as the (classical) one-phase free boundary problem (or Bernoulli free boundary problem), which is a typical model for flame propagation and jet flows; see \cite{BL82, CV95, We03, PY07, AC81, ACF82, ACF82b, ACF83}. From a mathematical point of view, it was originally studied by Alt and Caffarelli in \cite{AC81}, and since then multiple contributions have been made; see \cite{Caf87, Caf89, Caf88, CJK04,CS05, DJ09,JS15,EE19,ESV20} and references therein.

In this paper, we deal with the fractional analogue of \eqref{eq.Classical}, in which the Dirichlet energy in the functional is replaced by the $H^s$ fractional semi-norm of order $s\in (0, 1)$, 
\begin{equation}
\label{eq.Classicals}
\mathcal{J}_s(u) = [u]^2_{H^s(B_1)} + \big|\{u > 0\}\cap B_1\big|,
\end{equation}
(see \eqref{eq.seminorm} below) which corresponds to the case in which turbulence or long-range interactions are present, and appears in particular in flame propagation; see \cite{CRS10, DS15b} and references therein. 

This problem was first studied by Caffarelli, Roquejoffre, and Sire in \cite{CRS10}, where they obtained the optimal $C^s$ regularity for minimizers, the free boundary condition on $\partial\{u > 0\}$, and showed that Lipschitz free boundaries are $C^1$ in dimension $n = 2$. More recently, further regularity results for the free boundary have been obtained in \cite{DS12, All12, DR12, DSS14, DS15, DS15b, EKPSS20} among others.  These results imply that free boundaries are regular outside a certain set of singular points $\Sigma$, with ${\rm dim}_{\mathcal{H}}(\Sigma) \le n - k^*_s$ and $k_s^* \ge 3$. The value of $k_s^*$ is the lowest dimension in which there are stable/minimal cones.

Thus, to understand completely the structure and regularity of free boundaries, one must answer the following question:
\[
\textit{What is the first dimension $k_s^*$ in which stable/minimal cones appear?}
\]

This is the question that motivates our present work. 

\subsection{The non-local energy functional}
Let us consider the energy functional, 
\begin{equation}
\label{eq.energy}
\mathcal{J}_\Lambda (v, \R^n) =  [v]^2_{H^s(\R^n)} + \Lambda^2\big|\{v > 0\}\big|,
\end{equation}
depending on the parameter $\Lambda\in \R$, with the fractional semi-norm
\begin{equation}
\label{eq.seminorm}
\hspace{-1mm}[v]^2_{H^s(\R^n)} = \frac{c_{n,s}}{2}\iint_{\R^n\times \R^n} \frac{(v(x)- v(y))^2}{|x-y|^{n+2s}} \, dx\, dy,\quad\text{where}\quad c_{n, s} = \frac{s2^{2s} \Gamma\left(\frac{n+2s}{2}\right)}{\pi^{n/2}\Gamma(1-s)}
\end{equation}
is the constant appearing in the fractional Laplacian,
\[
\fls u (x)= c_{n,s} PV\int_{\R^n} \frac{u(x) - u(y)}{|x-y|^{n+2s}}\, dy.
\] 

Obtaining local minimizers to $\mathcal{J}_\Lambda$ is the \emph{fractional one-phase} free boundary problem. When $s = \frac12$ this is equivalent to the thin one-phase free boundary problem. It is a free boundary problem because, a priori, the zero-level set of the minimizer is unknown, and its boundary is called the ``free boundary''. After understanding the optimal regularity of minimizers, the study of the free boundary constitutes the main topic of research for this type of problem.

Let $u$ be a local minimizer (or critical point) to \eqref{eq.energy} in a ball $B$ (see \eqref{eq.criticalpoint}). Let $\Omega = \{u > 0\}$, and let us suppose $\Omega$ is smooth enough. Let
\[
	d(x) = {\rm dist}(x, \partial \Omega).
\]
Then, by standard variational arguments we have that $\fls u = 0$ in $\Omega\cap B$. Moreover, we have that $u$ solves the following problem involving a condition on the fractional derivative on $\partial \Omega$,
\begin{equation}
\label{eq.OPT}
\left\{
\begin{array}{rcll}
\fls u & = & 0 & \quad\textrm{in } \Omega\cap B\\
u & = & 0 & \quad\textrm{in } \Omega^c \cap B \vspace{1mm} \\ 
\Gamma(1+s) \displaystyle{\frac{u}{d^{s}}} & = & \Lambda& \quad \textrm{on }\partial \Omega\cap B.
\end{array}
\right.
\end{equation}
This is the first variation of the energy functional. It was first proved in \cite{CRS10} but, unfortunately, with a computational mistake in the derivation of the constant. For completeness, in Proposition~\ref{prop.crs} below, we find the precise constant $\Gamma(1+s)$ which, as far as we know, was only explicitly known for the case $s = \frac12$ (see Remarks~\ref{rem.1} and \ref{rem.2} below). We refer to Section~\ref{sec.2} for the definition of critical point.

\subsection{The stability condition}

A main goal of this paper is to obtain the second variation of the energy functional. Namely, we will find the stability condition for~\eqref{eq.energy}.

In order to state the result, we need the following definition:

\begin{defi}
\label{defi.kernel}
Let $\Omega$ be a $C^{1,\alpha}$ domain outside the origin, and let $G_{\Omega, s}(x, y)$ be the Green function of the operator $\fls$ for the domain $\Omega$. Then, we define the kernel $\mathcal{K}_{\Omega, s}:\partial\Omega\times\partial\Omega\to \R$ as
\begin{equation}
\label{eq.kernelK_C}
\mathcal{K}_{\Omega, s}(x, y) = \lim_{\substack{\Omega\ni \bar x\to x\\\Omega\ni \bar y \to y }} \frac{G_{\Omega, s}(\bar x ,\bar y)}{d^s(\bar x)d^s(\bar y)}.
\end{equation}
By well-known boundary regularity estimates for the fractional Laplacian (\cite{RS14, RS17}), \eqref{eq.kernelK_C} is well-defined as soon as the boundary is $C^{1,\alpha}$. 

Furthermore, we also define the following curvature-type term
\[
\mathcal H_{\Omega, s}(x) \coloneqq \int_{\partial \Omega} |\nu(x) - \nu(y)|^2 \mathcal{K}_{\Omega, s}(x, y) d\sigma(y)
\]
for $x\in \partial\Omega$, and where $\nu:\partial\Omega\to \mathbb{S}^{n-1}$ denotes the unit inward normal vector on $\partial\Omega$, and $\sigma$ denotes the area measure on $\partial\Omega$.
\end{defi}

We can now state the second-variation condition for the energy functional \eqref{eq.energy}.

\begin{thm}
\label{thm.main}
Let $s\in(0, 1)$ and let $u\in C^s(\R^n)$ be a global $s$-homogeneous stable solution to  \eqref{eq.OPT}, in the sense \eqref{eq.criticalpoint}-\eqref{eq.stablesolution}. Assume that $\Omega \coloneqq \{u > 0\}$ is a $C^{2,\alpha}$ domain outside the origin.

Let $\mathcal{K}_{\Omega, s}$ and $\mathcal H_{\Omega, s}$ be given by Definition~\ref{defi.kernel}. Then, we have
\begin{equation}
\label{eq.stabilityFin_C}
 \int_{\partial\Omega}\int_{\partial\Omega} \big(f(x) - f(y)\big)^2\mathcal{K}_{\Omega, s}(x, y)\,d\sigma(x) \, d\sigma(y)\ge \int_{\partial\Omega} H_{\Omega, s}  f^2 \, d\sigma(x)
\end{equation}
for all $f\in C^\infty_c(\partial\Omega\setminus\{0\})$. 

Furthermore,  $\mathcal{K}_{\Omega, s}$ is $(-n)$-homogeneous and
\begin{equation}
\label{eq.Kcone}
\mathcal{K}_{\Omega, s}(x, y) \asymp \frac{1}{|x-y|^{n}}\quad\text{for all }~~x, y\in \partial\Omega,
\end{equation}
while $\mathcal H_{\Omega, s}$ is $(-1)$-homogeneous, and
\[
\mathcal{H}_{\Omega, s}(x) \asymp \frac{1}{|x|}\quad\text{for all }~~x\in \partial\Omega,
\]
if $\Omega$ is not a half space.
\end{thm}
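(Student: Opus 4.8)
The plan is to derive the stability inequality \eqref{eq.stabilityFin_C} by computing the second variation of $\mathcal{J}_\Lambda$ along domain deformations, and then to extract the qualitative homogeneity/size estimates from scaling and from the boundary behaviour of the Green function. First I would set up the first variation carefully: given a vector field $X\in C^\infty_c(\R^n\setminus\{0\};\R^n)$, let $\Phi_t$ be its flow and $u_t$ the harmonic (for $\fls$) extension in $\Omega_t=\Phi_t(\Omega)$ that equals $\Lambda d_{\Omega_t}^s/\Gamma(1+s)$ to leading order near $\partial\Omega_t$. The key point flagged in the abstract is that the correct object to differentiate is not $u_t$ itself but a \emph{large solution}: writing the perturbed minimizer in terms of the normal displacement $f = X\cdot\nu$ on $\partial\Omega$, the shape derivative $\dot u$ solves $\fls\dot u=0$ in $\Omega$ with a prescribed singular behaviour $\dot u \sim c\, f\, d^{s-1}$ near $\partial\Omega$, i.e. $\dot u$ is exactly the ``$d^{s-1}$-large solution'' with Dirichlet-type datum $f$ on $\partial\Omega$. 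This is where Definition~\ref{defi.kernel} enters: the kernel $\mathcal{K}_{\Omega,s}$ is precisely the Poisson-type kernel for large solutions, so that $\dot u(x) = \int_{\partial\Omega}\mathcal{K}_{\Omega,s}(x,y)\, f(y)\, d^s(x)\,d\sigma(y)$ up to normalization, and the Dirichlet energy $[\dot u]^2_{H^s}$ becomes the double integral on the left of \eqref{eq.stabilityFin_C}.

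Next I would expand $\mathcal{J}_\Lambda(u_t,\R^n)$ to second order in $t$. The first-order term vanishes since $u$ is a critical point \eqref{eq.OPT}; the second-order term splits into (i) the energy of the shape derivative, $[\dot u]^2_{H^s(\R^n)}$, which by the previous paragraph equals $\iint (f(x)-f(y))^2\mathcal{K}_{\Omega,s}\,d\sigma\,d\sigma$ after the antisymmetrization that always accompanies $H^s$ seminorms (the "$f(x)-f(y)$" structure comes from the fact that constants correspond to dilations, which leave the energy unchanged, so only differences of $f$ contribute), and (ii) a term coming from differentiating twice the volume $|\{u_t>0\}|$ together with the boundary normalization $\Gamma(1+s)u/d^s=\Lambda$; this produces the curvature term $\int_{\partial\Omega}\mathcal H_{\Omega,s}f^2\,d\sigma$. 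The geometric origin of $\mathcal H_{\Omega,s}$ is that the second variation of the constraint "$u$ vanishes on $\Omega^c$ to order $d^s$" under a deformation picks up $|\nu(x)-\nu(y)|^2$ weighted by the same kernel — this is the nonlocal analogue of the $|A|^2$ term (squared second fundamental form) in the classical minimal surface / one-phase stability inequality. Stability $\delta^2\mathcal{J}_\Lambda(u)[X]\ge 0$ for all such $X$ then gives \eqref{eq.stabilityFin_C} for all $f\in C^\infty_c(\partial\Omega\setminus\{0\})$, since every such $f$ is realized as $X\cdot\nu$ for some admissible $X$.

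For the homogeneity and two-sided bounds: since $u$ is $s$-homogeneous, $\Omega$ is a cone, so $d_\Omega$ is $1$-homogeneous and $G_{\Omega,s}(\lambda x,\lambda y)=\lambda^{2s-n}G_{\Omega,s}(x,y)$ by the scaling of the fractional Laplacian; dividing by $d^s(\bar x)d^s(\bar y)$ as in \eqref{eq.kernelK_C} shows $\mathcal{K}_{\Omega,s}$ is $(-n)$-homogeneous, and then $\mathcal H_{\Omega,s}$ is $(-1)$-homogeneous because $|\nu(x)-\nu(y)|^2$ is $0$-homogeneous and $d\sigma$ scales like $\lambda^{n-1}$. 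The estimate \eqref{eq.Kcone} follows from the standard sharp boundary estimates for the Green function of $\fls$ on $C^{1,\alpha}$ domains from \cite{RS14, RS17}: $G_{\Omega,s}(\bar x,\bar y)\asymp |\bar x-\bar y|^{2s-n}\min\{1,\tfrac{d^s(\bar x)d^s(\bar y)}{|\bar x-\bar y|^{2s}}\}$, so that the quotient in \eqref{eq.kernelK_C} is comparable to $|x-y|^{-n}$ when $x,y$ are on the boundary at comparable scales — and by homogeneity it suffices to check this on the unit sphere. Finally $\mathcal H_{\Omega,s}(x)\asymp |x|^{-1}$ when $\Omega$ is not a halfspace: the lower bound uses that $\nu$ is genuinely non-constant on $\partial\Omega$ (a cone with constant inward normal is a halfspace), so $|\nu(x)-\nu(y)|^2$ is bounded below on a set of positive $\sigma$-measure, combined with \eqref{eq.Kcone}; the upper bound requires checking that $\int_{\partial\Omega}|\nu(x)-\nu(y)|^2|x-y|^{-n}\,d\sigma(y)$ converges, which is where the $C^{2,\alpha}$ regularity is used — near the diagonal $|\nu(x)-\nu(y)|\lesssim|x-y|$ so the integrand is $\lesssim |x-y|^{2-n}$, integrable over the $(n-1)$-dimensional surface, and at large scales on the cone the $|x-y|^{-n}$ decay with $|\nu(x)-\nu(y)|$ bounded gives convergence too.

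The main obstacle I expect is item (i)–(ii) of the second paragraph: rigorously justifying that the shape derivative of the constrained minimizer is exactly the large solution with datum $f$, and that its $H^s$ energy is finite and equals the stated double integral. This requires precise control of the expansion $u_t = u + t\dot u + o(t)$ in a norm strong enough to pass to the limit in the (singular) Dirichlet energy — in particular understanding that although $\dot u \sim d^{s-1}$ blows up at $\partial\Omega$, it still has finite $H^s$ seminorm because $s-1 > -1/2$ is false in general, so one must instead work with the regularized energy $[u_t]^2_{H^s}-[u]^2_{H^s}$ and show the cross terms cancel the divergences, leaving the finite kernel expression. Handling the interplay between the moving domain $\Omega_t$ and the fixed-domain Green function, and making the "large solution" framework for $\fls$ precise (existence, uniqueness, and the Poisson representation via $\mathcal{K}_{\Omega,s}$), is the technical heart of the argument.
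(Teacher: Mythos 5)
Your high-level plan — derive a second-variation inequality by deforming the domain, identify the shape derivative as a large solution, and then translate everything into kernel language through the boundary behaviour of $G_{\Omega,s}$ — is indeed the spirit of the paper's argument, and your treatment of the homogeneity and two-sided bounds (via scaling of $G_{\Omega,s}$ plus boundary estimates for the kernel) matches the paper's Lemma~\ref{lem.estimatesK}. But there are two genuine gaps in the way you propose to arrive at \eqref{eq.stabilityFin_C}, and they are precisely the places where the paper needs nontrivial ideas.

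First, your claim that ``the Dirichlet energy $[\dot u]^2_{H^s}$ becomes the double integral on the left of \eqref{eq.stabilityFin_C}'' is not correct, and cannot be correct: the shape derivative $\dot u=F$ behaves like $f\,d^{s-1}$ near $\partial\Omega$, so $[F]_{H^s(\R^n)}=\infty$ in general (as you yourself worry about at the end). The double integral is \emph{not} a renormalized version of $[F]_{H^s}^2$. What the paper actually shows (Proposition~\ref{prop.main} plus the proof of Theorem~\ref{thm.cor.1}) is that the second variation is controlled by the \emph{boundary} pairing
\[
\int_{\partial\Omega} f\,T_\Omega(f)\,d\sigma,\qquad T_\Omega(f)=\partial_\nu\!\left(\frac{F}{d^{s-1}}\right),
\]
a Dirichlet-to-Neumann--type quantity for large solutions. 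Only by writing $F$ via the normalized Green kernel $\bar G_{\Omega,s}$ and extracting the ``pure difference'' part does one get
\[
T_\Omega(f)(x)=\int_{\partial\Omega}\bigl(f(y)-f(x)\bigr)\,\mathcal{K}_{\Omega,s}(x,y)\,d\sigma(y)+f(x)\,T_\Omega(1),
\]
and then symmetrization of the first term yields the left-hand side of \eqref{eq.stabilityFin_C}, while $T_\Omega(1)$ gets absorbed into the right-hand side. This decomposition is exactly what lets the divergences you identify cancel, and your proposal does not produce it.

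Second, the account of where $\mathcal H_{\Omega,s}$ comes from is wrong. In the paper, the second variation of the volume does produce a mean-curvature term $\tfrac12\int_{\partial\Omega}Hf^2$, but this term \emph{cancels} against an identical contribution from the expansion of $\fls v_\eps$ (via $\bar c_*$ in Lemma~\ref{lem.form2}); the surviving coefficient in Proposition~\ref{prop.main} is the first-order boundary expansion coefficient $U_1=\partial_\nu(u/d^s)$, not $H$. The conversion of $U_1$ into the kernel form $\mathcal H_{\Omega,s}(x)=\int_{\partial\Omega}|\nu(x)-\nu(y)|^2\mathcal{K}_{\Omega,s}(x,y)\,d\sigma(y)$ requires the key identity (which your proposal misses entirely) that for each fixed direction $\boldsymbol e$, the function $\partial_{\boldsymbol e}u$ is itself a large solution with boundary datum proportional to $\boldsymbol e\cdot\nu$, so that $U_1(x)$ can be computed as $T_\Omega(\nu(x)\cdot\nu(\cdot))(x)$ up to constants. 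Combining this with the $f(x)^2\,T_\Omega(1)$ term from the symmetrization, and using $1-\nu(x)\cdot\nu(y)=\tfrac12|\nu(x)-\nu(y)|^2$, is what produces $\mathcal H_{\Omega,s}$. Without these two steps your heuristic cannot be made to converge to the stated inequality.
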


Here, we have denoted $g_1(x)\asymp g_2(x)$ if $C^{-1} g_2(x) \le g_1(x) \le C g_2(x)$ for some positive constant $C$ independent of $x$. 

The result stated here is for $s$-homogeneous solutions since we are mainly interested in blow-ups at free boundary points.
We refer the reader to Theorem~\ref{thm.cor.1} below for a more general result dropping the $s$-homogeneity hypothesis.

As we will see later on in the paper, the stability condition \eqref{eq.stabilityFin_C} has an equivalent formulation in terms of large solutions for the fractional Laplacian (which were introduced and studied in \cite{Aba15,Gru15}).
More precisely, \eqref{eq.stabilityFin_C} turns out to be equivalent to 
\begin{equation}\label{intro-large}
\int_{\partial \Omega} f \, T_{\Omega,s}f \, d\sigma \geq \kappa_s\int_{\partial \Omega} U_1 f^2 \, d\sigma
\end{equation}
for all $f\in C^\infty_c(\partial\Omega\setminus\{0\})$,
where 
\[U_1\coloneqq -\frac{1}{\Lambda}\,\partial_\nu\left(\frac{u}{d^s}\right),\qquad T_{\Omega,s}f\coloneqq -\partial_\nu\left(\frac{F}{d^{s-1}}\right),\]
and $F$ is the unique solution of
\[\left\{
\begin{array}{rcll}
\fls F & = & 0 & \quad\textrm{in } \Omega\\
F & = & 0 & \quad\textrm{in } \Omega^c\\
\displaystyle \frac{F}{d^{s-1}} & = & f& \quad \textrm{on }\partial \Omega
\end{array}
\right.\]
satisfying $F \to 0$ for $|x|\to\infty$. (Notice that $F$ blows-up on the free boundary $\partial\Omega$.)

Such equivalence is not trivial, and actually $U_1$ is related, but not equal, to $\mathcal H_{\Omega,s}$.

\begin{rem}
Alternatively, we can rewrite \eqref{eq.stabilityFin_C} in a more symmetric way as 
\[
\int_{\partial\Omega}\int_{\partial\Omega} \left\{\nu(x) \cdot\nu(y) \left(f(x)^2+f(y)^2\right) - 2 f(x) f(y)\right\}\mathcal{K}_{\Omega, s}(x, y) d\sigma(x) d\sigma(y)\ge 0 
\]
for all $f\in C^\infty_c(\partial\Omega\setminus \{0\})$.
\end{rem}

\begin{rem} We emphasize that \eqref{eq.stabilityFin_C} ---or \eqref{intro-large}--- is the non-local counter-part of the result by Caffarelli, Jerison, and Kenig for stable solutions in the classical one-phase obstacle problem \cite{CJK04}. 
In particular, one can show that  both $\mathcal H_{\Omega, s}$ and $U_1$ converge to the mean curvature $H$ of $\partial\Omega$ when $s\uparrow 1$.
In that case, the stability condition can be written as 
\[ \int_{\Omega} |\nabla f|^2 \, dx \geq \int_{\partial \Omega} H f^2 \, d\sigma\]
for all $f\in C^\infty_c(\R^n\setminus\{0\})$.
Unfortunately, we do not have such a simple expression in the nonlocal case $s\in(0,1)$.
\end{rem}

\begin{rem}[Nonlocal minimal surfaces]
Our expression \eqref{eq.stabilityFin_C} also has a similar structure to the one obtained for the second variation of nonlocal perimeters at nonlocal minimal surfaces, see \cite[Eq. (1.5)-(1.6)]{DDW18} and \cite[Theorem 6.1]{FFMMM15}, which is then used in \cite{CCS20} to classify stable $s$-minimal cones in $\R^3$ for $s\sim 1$. Notice, however, that our expression is fundamentally different in nature: while the scalings for the stability condition for nonlocal perimeters are of order $1+s$, our scalings preserve the local structure independently of $s$ (with order 1). Moreover, as we will see, obtaining our stability condition \eqref{eq.stabilityFin_C} or \eqref{intro-large} turns out to be more delicate, and requires fine estimates for $s$-harmonic functions near the boundary. 
\end{rem}

\subsection{Scaling}

Note that since $\partial\Omega$ is $(n-1)$-dimensional, the left-hand side in \eqref{eq.stabilityFin_C} behaves roughly as a fractional Laplacian of order $1$ (i.e., like $\sqrt{-\Delta}$) on $\partial\Omega$, and this is exactly true if $\Omega$ is a half-space.

On the other hand, $\mathcal H_{\Omega, s}$ is $(-1)$-homogeneous (i.e. it equals $H_1 |x|^{-1}$ for a $0$-homogeneous $H_1$). In particular, it is some kind of non-local curvature term that nonetheless preserves the local curvature scaling. Thus, the expression \eqref{eq.stabilityFin_C} can be understood as a Hardy-type inequality on $\partial\Omega$. In particular, as an immediate consequence of this, one can see by an asymptotic analysis that, in dimension $n=~2$, the only stable cones are half spaces. This was known for minimizers, \cite{DS15, EKPSS20}, and here we give a different and short proof of the following result.

\begin{cor}
\label{cor.2D}
Let $s \in (0, 1)$ and let $u\in C^s(\R^2)$ be an $s$-homogeneous global solution to the one phase problem \eqref{eq.OPT}. Assume $u$ is stable in the sense \eqref{eq.criticalpoint}-\eqref{eq.stablesolution}. Then, up to a rotation and a multiplicative constant, $u$ is the half-space solution, $u = (x_1)_+^s$. 
\end{cor}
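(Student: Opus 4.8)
The plan is to use the stability inequality \eqref{eq.stabilityFin_C} together with the homogeneity properties of $\mathcal{K}_{\Omega,s}$ and $\mathcal{H}_{\Omega,s}$ stated in Theorem~\ref{thm.main} to rule out every $s$-homogeneous cone in $\R^2$ except the half-space. Since $u$ is $s$-homogeneous in $\R^2$, the free boundary $\partial\Omega$ is a cone in the plane, i.e.\ a finite union of rays emanating from the origin. If $\partial\Omega$ is a single line through the origin, then $\Omega$ is a half-space and we are done (this is the desired conclusion). So, arguing by contradiction, suppose $\Omega$ is not a half-space; then $\partial\Omega$ consists of at least two rays and, crucially, Theorem~\ref{thm.main} applies and gives $\mathcal{H}_{\Omega,s}(x)\asymp |x|^{-1}$ on $\partial\Omega\setminus\{0\}$, together with $\mathcal{K}_{\Omega,s}(x,y)\asymp|x-y|^{-n}=|x-y|^{-2}$.

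Next I would parametrize each ray by arclength $t\in(0,\infty)$ and test \eqref{eq.stabilityFin_C} with functions $f$ supported on (a neighborhood of one point of) $\partial\Omega\setminus\{0\}$, chosen to mimic the extremizers of the one-dimensional Hardy inequality on a half-line. Concretely, on a single ray the left-hand side of \eqref{eq.stabilityFin_C} is comparable to the $H^{1/2}$-seminorm $\iint \frac{(f(t)-f(\tau))^2}{|t-\tau|^2}\,dt\,d\tau$ (using \eqref{eq.Kcone}), while the right-hand side is comparable to $\int \frac{f(t)^2}{t}\,dt$ (using the $(-1)$-homogeneity and the lower bound $\mathcal{H}_{\Omega,s}\gtrsim|x|^{-1}$). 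The inequality \eqref{eq.stabilityFin_C} thus forces a Hardy-type inequality of the form
\[
C\iint_{(0,\infty)^2} \frac{(f(t)-f(\tau))^2}{|t-\tau|^2}\,dt\,d\tau \;\ge\; \int_0^\infty \frac{f(t)^2}{t}\,dt
\]
for all $f\in C^\infty_c(0,\infty)$. The point is that the sharp constant in the fractional Hardy inequality $\iint \frac{(f(t)-f(\tau))^2}{|t-\tau|^2}\,dt\,d\tau \ge c_H\int \frac{f^2}{t}\,dt$ is explicit and finite; testing against $f_\eps(t)=t^{1/2}\varphi(t)$ with a logarithmic cutoff (the classical near-extremizer), the left-hand side stays bounded while $\int_0^\infty \frac{f_\eps^2}{t}\,dt\sim\int \varphi^2\,\frac{dt}{t}\to\infty$ as the cutoff widens; more precisely one exploits that $t^{1/2}$ is exactly the threshold power making the quotient finite, so by scaling/concentration one produces test functions for which the ratio of the two sides is pushed against the sharp constant. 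If the constant $C$ coming from the bounds $\mathcal{K}_{\Omega,s}\le C|x-y|^{-2}$ and $\mathcal{H}_{\Omega,s}\ge C^{-1}|x|^{-1}$ violates the sharp Hardy constant, we get a contradiction; so the real content is a quantitative comparison between the implicit constants in \eqref{eq.Kcone}, in $\mathcal{H}_{\Omega,s}\asymp|x|^{-1}$, and the sharp fractional Hardy constant on the half-line.

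The main obstacle I anticipate is precisely making this constant comparison rigorous: the $\asymp$ in Theorem~\ref{thm.main} hides constants depending on $\Omega$, and a crude bound may not beat the sharp Hardy constant. The clean way around this is an asymptotic (blow-up/zoom-in) argument at a point $x_0\in\partial\Omega$ away from the origin: as one rescales around $x_0$, the cone $\partial\Omega$ looks locally like a straight line, $\mathcal{K}_{\Omega,s}$ converges (after rescaling) to the half-space kernel — for which the constant in the Hardy inequality is \emph{exactly} the sharp one by the explicit half-space computation — and $\mathcal{H}_{\Omega,s}(x)$, being $(-1)$-homogeneous and strictly positive at $x_0\neq 0$ because $\Omega$ is not a half-space, contributes a strictly positive multiple of $|x|^{-1}$ that the sharp Hardy inequality cannot absorb. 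Thus testing \eqref{eq.stabilityFin_C} with functions concentrating near $x_0$ at the Hardy extremal scale yields the contradiction. The remaining routine points are: verifying that the single-ray contribution dominates (the cross-terms between different rays only help the left-hand side), checking that the cutoff functions are admissible in $C^\infty_c(\partial\Omega\setminus\{0\})$, and confirming the half-space kernel computation giving the sharp constant — all of which I would carry out after setting up the contradiction scheme above.
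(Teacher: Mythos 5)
Your proposal starts on the right track (assume not a half-space, use the stability inequality with the asymptotics $\mathcal{K}_{\Omega,s}\asymp|x-y|^{-2}$ and $\mathcal{H}_{\Omega,s}\asymp|x|^{-1}$, and parametrize the rays of $\partial\Omega$ by arclength to reduce to a Hardy-type inequality on the half-line), but the way you attack the resulting inequality is off, and the ``constant comparison'' and blow-up machinery you build around it is not only unnecessary but symptomatic of a misunderstanding of the underlying one-dimensional inequality.

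The inequality you arrive at is
\[
C\iint_{(0,\infty)^2} \frac{(f(t)-f(\tau))^2}{|t-\tau|^2}\,dt\,d\tau \;\ge\; \int_0^\infty \frac{f(t)^2}{t}\,dt ,
\]
which, in the fractional Hardy scale, has kernel $|t-\tau|^{-(1+2s)}$ with $s=\tfrac12$ and weight $t^{-2s}$ with $2s=1=n$. This is exactly the \emph{critical} case in one dimension, and the inequality \emph{fails for every constant $C$}: there is no finite sharp Hardy constant here. So your worry about ``whether the implicit constant in $\asymp$ beats the sharp Hardy constant'' is moot, and the whole blow-up/zoom-in scheme at $x_0\neq 0$ designed to recover an exactly-sharp constant is superfluous. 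Moreover your chosen near-extremizer $f_\eps(t)=t^{1/2}\varphi(t)$ does not do what you claim: $f_\eps^2/t=\varphi^2$, so the right-hand side is $\int\varphi^2\,dt$ (not $\int\varphi^2\,dt/t$), and the left-hand side is \emph{not} bounded for such $f_\eps$ — since $f_\eps = R^{1/2} f_1(t/R)$ for a fixed profile $f_1$ when the cutoff lives at scale $R$, and the 1D $H^{1/2}$ seminorm is scale-invariant, the seminorm grows like $R$. In short the computation is wrong and does not produce a contradiction.

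The correct and much simpler choice, which is what the paper does, is a truncated \emph{constant}: take $f_R(x)=\varphi(|x|)\zeta_R(|x|)$ with $\zeta_R\equiv 1$ on $[0,R]$, linearly decreasing to $0$ on $[R,2R]$, and $\varphi$ a fixed cutoff near the origin. Since $\zeta_R(Rt')=\zeta_1(t')$ and the kernel $|t-\tau|^{-2}$ makes the double integral exactly scale-invariant in one dimension, the left-hand side stays bounded by a constant independent of $R$. Meanwhile $\int_{\partial\Omega}f_R^2/|x|\gtrsim\int_1^R dr/r=\ln R\to\infty$. This is immediate and requires no sharp-constant analysis, no blow-up, and no comparison of implicit constants — the critical Hardy inequality on the line simply fails, and the logarithmic counterexample exhibits the failure.
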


In higher dimensions, $n\ge 3$, the situation is much more complicated and cannot be understood simply by scaling.
Indeed, we expect the inequality \eqref{eq.stabilityFin_C} to be always true for a sufficiently large multiplicative constant.

\subsection{Axially symmetric cones}

The classification of stable/minimal cones in dimension $n \ge 3$ is an extremely challenging problem. Even in case of the classical Alt-Caffarelli functional \eqref{eq.Classical}, the problem is still not completely understood \cite{JS15}.

In the context of minimal surfaces, the Simons cone is the first counter-example of a non-smooth minimal cone for $n \ge 8$. 
As a consequence, the natural candidates for non-trivial minimal cones in the context of nonlocal minimal surfaces are those with symmetry of ``double revolution'' \cite{DDW18}. 


The role played by the Simons cone for minimal surfaces is played by {axially symmetric} cones in the one phase free boundary problem. 
Indeed, in this context, the natural non-trivial solutions have axial symmetry; see \cite{DJ09}. 
As such, axially symmetric solutions have also been studied in \cite{CJK04, FR19, LWW19}.

Thus, for the thin/fractional one-phase free boundary problem, the first case to be understood is that of \emph{axially symmetric} cones. 
Let us define, for each $\beta\in (0, \infty)$, the axially symmetric cone 
\[
\mathcal{C}(\beta) \coloneqq \big\{(x', x_n)\in \R^{n-1}\times\R : \, |x'|> \beta |x_n| \big\}.
\]
\begin{figure}[h]
\centering
\includegraphics{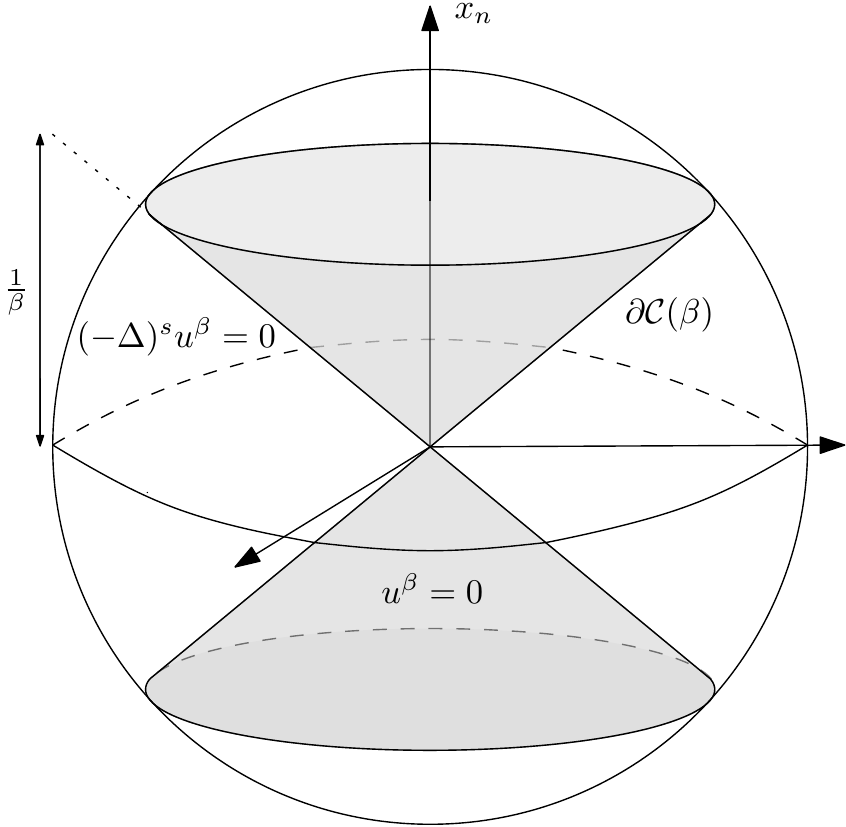}
\caption{The greyed area is the complement of the cone $\mathcal{C}(\beta)$.}
\label{fig.1}
\end{figure}

Let us consider $u^\beta$ the unique positive solution to 
\[
\left\{
\begin{array}{rcll}
\fls u^\beta & = & 0 & \quad\textrm{in } \mathcal{C}(\beta)\\
u^\beta & = & 0 & \quad\textrm{in } \R^n\setminus \mathcal{C}(\beta),
\end{array}
\right.
\]
see Figure~\ref{fig.1}. 
Then $u^\beta$ is $\lambda$-homogeneous, where $\lambda = \lambda(\beta)\in (0, 2s)$. Moreover, $\lambda$ is continuous and strictly monotone in $\beta$ (see \cite{TTV18}), so that there exists a unique $\beta_{n, s}$ such that $u_s \coloneqq u^{\beta_{n, s}}$ is homogeneous of degree $s$. 
In particular, $u/{d^s}$ is $0$-homogeneous, and therefore, by symmetry, it is constant on $\partial \Co(\beta_{n, s})$. Hence, up to a multiplicative constant, $u_s$ is a solution to the fractional one-phase problem with contact set given by $\Co_{n, s} \coloneqq \Co(\beta_{n, s})$.

In the classical one-phase problem ($s=1$), these solutions $u_s$ are known to be unstable in dimensions $n \le  6$ (see \cite{CJK04}).
Still, even in such case, the proof in  \cite{CJK04} is quite delicate and required some fine numerical computations. 

Here, we use our new stability condition \eqref{intro-large} to prove that stable (and in particular, minimal) axially-symmetric cones for the fractional one-phase problem are trivial in dimensions $n\le  5$.

\begin{thm}
\label{thm.low-dim}
Let $s\in (0, 1)$ and let $u\in C^s(\R^n)$ be a global stable solution to \eqref{eq.OPT}, in the sense \eqref{eq.criticalpoint}-\eqref{eq.stablesolution}. 
Assume in addition that $u$ is $s$-homogeneous and axially-symmetric. 
Then, if $n \le 5$, $u$ is one-dimensional.
\end{thm}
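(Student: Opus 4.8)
The plan is to insert the axially symmetric cone $\Co_{n,s}$ into the stability inequality \eqref{intro-large} (equivalently \eqref{eq.stabilityFin_C}), and to choose a clever one-parameter family of competitor functions $f$ adapted to the axial symmetry, so that the failure of stability in dimensions $n\le 5$ becomes a statement about a one-variable ODE/integral inequality that we can check (either in closed form or with the help of the explicit homogeneity exponents). Concretely, the boundary $\partial\Co_{n,s}$ is itself a cone, invariant under $SO(n-1)\times\{\pm 1\}$ acting on $(x',x_n)$, so every quantity appearing in the stability condition — the kernel $\mathcal K_{\Omega,s}$, the weight $U_1$ (or $\mathcal H_{\Omega,s}$), and the operator $T_{\Omega,s}$ — is invariant under this symmetry and homogeneous of the degrees recorded in Theorem~\ref{thm.main}. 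Parametrizing $\partial\Co_{n,s}$ by $(r,\omega)$ with $r=|x|>0$ and $\omega$ in a fixed $(n-2)$-sphere, one reduces $f$ to a radial profile times a cutoff; the $(-n)$-homogeneity of $\mathcal K$ and the $(-1)$-homogeneity of $U_1$ make the quadratic form scale exactly like $\int_0^\infty (\phi'(t))^2\,dt$ versus $c\int_0^\infty \phi(t)^2\,dt$ after the logarithmic substitution $t=\log r$, i.e. a Hardy-type inequality on the half-line with a sharp constant $c=c(n,s)$.

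The key steps, in order, are: (i) write down the stability functional restricted to $\Co_{n,s}$ and use the symmetry to reduce to functions depending only on $r$; (ii) perform the change of variables $r=e^t$ to obtain, on the left, (a constant multiple of) $[\phi]_{\dot H^{1/2}(\R)}^2$ plus lower-order terms, and on the right, $c(n,s)\int_\R \phi^2\,dt$; (iii) identify the sharp constant in the resulting one-dimensional inequality — this is where one uses that the left side is of order exactly $1$ (not $1+s$), so that the relevant model operator is $\sqrt{-\Delta}$ on $\R$ whose Hardy constant is explicit; (iv) compute, or estimate from below, the constant $c(n,s)=\inf_{\partial\Co_{n,s}} |x|\,U_1$ (equivalently the relevant eigenvalue of the curvature term $\mathcal H_{\Omega,s}$), and show $c(n,s)$ exceeds the sharp Hardy constant precisely when $n\le 5$; (v) conclude that \eqref{intro-large} fails for a suitable $f$, so no nontrivial axially symmetric stable cone exists, whence by Corollary~\ref{cor.2D}-type reasoning together with the classification of $1$-homogeneous solutions, $u$ must be one-dimensional. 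A delicate point in (v) is ruling out the borderline: one should take $\phi$ a truncated near-optimizer of the Hardy inequality (e.g. $\phi(t)=\eta(\eps t)$ times a slowly-decaying profile) and verify the strict inequality survives the truncation and the lower-order remainder terms.

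The main obstacle I expect is step (iv): getting a sufficiently sharp \emph{lower} bound on the curvature weight $U_1$ (or on $\mathcal H_{\Omega,s}$) along $\partial\Co_{n,s}$, since $\mathcal H_{\Omega,s}(x)\asymp |x|^{-1}$ only gives two-sided bounds with non-explicit constants, whereas distinguishing $n=5$ from $n=6$ requires the true constant (or at least a rigorous lower bound that crosses the Hardy threshold exactly at $n=5$). This is the nonlocal analogue of the delicate numerical computation in \cite{CJK04}, and it is plausible that here too one needs either an exact formula for $U_1$ on the cone — exploiting that $u_s/d^s$ is constant on $\partial\Co_{n,s}$ and that $u_s$ is an explicit homogeneous $s$-harmonic function on a cone — or a monotonicity-in-$n$ argument comparing $U_1$ for consecutive dimensions. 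A secondary technical point is justifying that $f\in C^\infty_c(\partial\Omega\setminus\{0\})$ suffices and that the logarithmic cutoffs used to approach the Hardy optimizer are admissible competitors; this is routine given the $(-n)$-homogeneity and the bound \eqref{eq.Kcone}, which guarantee the left-hand side is finite for such $f$.
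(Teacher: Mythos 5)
Your proposal is essentially the Mellin-transform/Hardy-threshold strategy that the paper itself develops in Section~\ref{sec.7} --- and which the authors explicitly present as the analogue of Caffarelli--Jerison--Kenig's numerical approach, one that \emph{they do not use to prove Theorem~\ref{thm.low-dim}} precisely because it requires numerics they cannot carry out analytically. The genuine gap is your step (iv). After reducing to a Hardy inequality on the half-line, the question becomes whether $c(n,s)$ (i.e., $H_1=\int_{\partial\Co}|\nu(x_1)-\nu(y)|^2\mathcal K_{\Co}(x_1,y)\,d\sigma(y)$, or equivalently $\inf |x|\,U_1$) exceeds the sharp multiplier constant $m(0)$ of the reduced one-dimensional operator. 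But Theorem~\ref{thm.main} only yields $\mathcal H_{\Omega,s}(x)\asymp |x|^{-1}$ and $\mathcal K_{\Omega,s}(x,y)\asymp|x-y|^{-n}$ with non-explicit comparison constants, and neither $\beta_{n,s}$ nor $G_{\Co}$ is explicit for $s\in(0,1)$. There is no analytic mechanism in your argument that makes the threshold fall at $n=5$ versus $n=6$; ``compute or estimate from below'' is exactly the hard part, and you acknowledge you do not know how to do it. So as stated this is a plan, not a proof.

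The paper sidesteps this entirely with a different device. Starting from the extension-variable stability inequality (Proposition~\ref{prop.main_E}), it takes as competitor $F=\eta\,\bar u_\s$, where $\bar u_\s=\hat\s\cdot\nabla\bar u$ is the axial derivative of the $a$-harmonic extension --- a \emph{large} solution blowing up like $d^{s-1}$. Two structural facts then do all the work: the identity
\[
\frac{1+s}{s}\,\partial_\nu\!\Big(\frac{u}{d^s}\Big)\,\frac{u_\s}{d^{s-1}}=\partial_\nu\!\Big(\frac{u_\s}{d^{s-1}}\Big)\quad\text{on }\partial\Omega,
\]
which makes the two free-boundary integrals in the stability condition cancel against each other, and the PDE $L_a\bar u_\s=\frac{n-2}{\s^2}\,\bar u_\s\,y^a$, obtained by differentiating $L_a\bar u=0$ in the $\s$ direction. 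After the cancellations, the stability condition collapses to the clean Hardy-type inequality
\[
\int_{\{y>0\}}\bar u_\s^2\,|\nabla\eta|^2\,y^a \;\ge\; (n-2)\int_{\{y>0\}}\bar u_\s^2\,\eta^2\,\s^{-2}\,y^a,
\]
in which the dimension $n$ appears \emph{explicitly} and there is nothing left to compute numerically. Taking $\eta=\s^{-\alpha}$ with cutoffs and optimizing yields the constraint $2\alpha>n-2>\alpha^2$, i.e.\ $(n-2)^2/4<n-2$, i.e.\ $n<6$, forcing $\bar u_\s\equiv0$ and hence $u$ one-dimensional for $n\le5$. The crucial idea you are missing is that the test function should be a \emph{derivative of the solution itself} (in the spirit of Cabr\'e--Capella for semilinear equations), which transfers all the geometric information about $\Co_{n,s}$ into an explicit zeroth-order term $\frac{n-2}{\s^2}$, rather than leaving it buried in an intractable constant $c(n,s)$.

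Two smaller points. First, your appeal to the scaling-order-$1$ structure to identify $\sqrt{-\Delta}$ on $\R$ is correct in spirit (the paper makes the same observation), but the exact Fourier--Mellin multiplier $m(\xi)$ of the reduced operator is \emph{not} that of $\sqrt{-\Delta}$; it depends on $\tilde{\mathcal K}_\Co$, which again is not explicit. Second, your concluding step (v) conflates ``$\Co_{n,s}$ is unstable'' with ``$u$ is one-dimensional''; the paper's argument directly shows $u_\s\equiv 0$ for an arbitrary axially symmetric stable $s$-homogeneous $u$, which is cleaner and does not require first classifying the possible conical positivity sets.
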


To our surprise, our proof of the previous result gives as a condition on $n$ that $n \le 6-\delta$ for any $\delta > 0$, \emph{independently} of $s$. Since we already know that the case $n = 6$ is the best we could hope for if $s = 1$ (\cite{DJ09}), based on Theorem~\ref{thm.low-dim} (and its proof), we conjecture the following:

\begin{conjecture}
Let $s\in (0, 1)$ and let $u\in C^s(\R^n)$ be a global, stable, $s$-homogeneous solution to the fractional one-phase problem. If $n \le 6$, then $u$ is one-dimensional.
\end{conjecture}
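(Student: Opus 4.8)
The plan is to reduce the classification to an instability estimate for a single explicit cone, and then to turn the stability condition in its large-solution form \eqref{intro-large} into a sharp one-dimensional, Hardy-type spectral inequality.

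\emph{Reduction to one cone.} Since $u$ is $s$-homogeneous its contact set $\Omega=\{u>0\}$ is a cone, and since $u$ is axially symmetric so is $\Omega$. Using the continuity and strict monotonicity of the homogeneity exponent $\lambda(\beta)$ of $u^\beta$ in $\beta$ (together with the analogous facts for the complementary ``inner'' cones), the only possibilities are the one-dimensional solution $(x_1)_+^s$, whose contact set is a half-space, and --- up to rotation and a multiplicative constant --- the solution $u_s$ on $\Co_{n,s}=\Co(\beta_{n,s})$ (possibly together with finitely many other isolated axially symmetric double cones, treated identically). In the non-trivial cases $\Omega$ is $C^{2,\alpha}$ away from the origin, so Theorem~\ref{thm.main} applies and \eqref{intro-large} holds. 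It therefore suffices to show that $u_s$ is \emph{not} stable when $n\le 5$, i.e. to produce $f\in C^\infty_c(\partial\Co_{n,s}\setminus\{0\})$ with $\int_{\partial\Omega} f\,T_{\Omega,s}f\,d\sigma<\kappa_s\int_{\partial\Omega}U_1 f^2\,d\sigma$.

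\emph{Diagonalization using the symmetries.} By Theorem~\ref{thm.main} and axial symmetry, $U_1$ is $(-1)$-homogeneous and axially symmetric, hence $U_1(x)=c_0(n,s)\,|x|^{-1}$ for an explicit $c_0>0$; and $T_{\Omega,s}$ --- built from the large solution $F$ --- is positive, self-adjoint, commutes with dilations and with rotations fixing the axis, and, by \eqref{eq.Kcone}, is of order one on the $(n-1)$-dimensional cone $\partial\Omega$ (its Dirichlet-form kernel is comparable to $|x-y|^{-n}$, the $H^{1/2}$-kernel of $\partial\Omega$). Restricting \eqref{intro-large} to axially symmetric competitors $f=f(r)$, $r=|x|$ (equal on the two nappes), writing $\partial\Co_{n,s}\cong(0,\infty)_r\times\mathbb{S}^{n-2}$ with $d\sigma\propto r^{n-2}\,dr\,d\omega$, and substituting $f(r)=r^{-(n-2)/2}h(\log r)$, the left-hand side becomes a translation-invariant, order-one quadratic form in $t=\log r$ whose (even) Fourier multiplier $m_{n,s}(\tau)$, $\tau\in\R$, can be written explicitly, while the right-hand side collapses to $\kappa_s c_0\cdot(\text{const})\int_\R h(t)^2\,dt$ since the weight $U_1\,d\sigma$ flattens to $dt$. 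Hence the restricted inequality is equivalent to $m_{n,s}(\tau)\ge\kappa_s c_0$ for all $\tau$; in particular, $u_s$ is unstable as soon as $m_{n,s}(\tau_0)<\kappa_s c_0$ for one $\tau_0$, and it is enough to take $\tau_0=0$ (the truncated critical mode $f(r)\approx r^{-(n-2)/2}$).

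\emph{The explicit threshold, uniform in $s$.} The number $m_{n,s}(0)$ is an indicial quantity: constructing the large solution $F$ for the critical boundary datum on the cone from the explicit (hyper)geometric profiles of $s$-harmonic functions, one computes $m_{n,s}(0)$ in closed form (reducing, in the half-space limit, to a ratio of Gamma functions in $n$ and $s$). The constant $c_0$, read off from $U_1=-\Lambda^{-1}\partial_\nu(u_s/d^s)$, is likewise computed from the $\lambda$-homogeneous profile $u^{\beta_{n,s}}$ and the sharp constant $\Gamma(1+s)$ of Proposition~\ref{prop.crs}, and can be related to $\lambda'(\beta_{n,s})$. One then checks $m_{n,s}(0)<\kappa_s c_0$ for every $s\in(0,1)$ whenever $n\le 5$; equivalently, defining the critical dimension $n^\ast(s)$ by equality, one proves $n^\ast(s)>6-\delta$ for a universal $\delta>0$, so that the integrality of $n$ yields instability precisely for $n\le 5$, and the full conjectural range $n\le 6$ would follow only from the borderline case. (The endpoint $n=6$ is exactly the value that is already critical at $s=1$ in \cite{CJK04}, which is why it remains only a conjecture for $s\in(0,1)$.)

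\emph{Main obstacle.} The crux is the last step: proving $m_{n,s}(0)<\kappa_s c_0$ \emph{uniformly in} $s\in(0,1)$. Both quantities degenerate as $s\to 0$ and $s\to 1$, so the comparison must be carried out with enough precision to exclude any drop of the critical dimension below $6$ --- the analogue of the delicate numerical estimates needed in \cite{CJK04} for $s=1$, now required for all $s$ and presumably handled via monotonicity or log-convexity properties of the relevant Gamma-function ratios in $n$ and $s$ (or via sharp asymptotics of $\lambda(\beta)$ as $\beta\to 0,\infty$). A secondary but genuine point is the rigorous justification of the diagonalization step: that, after separating the axial variable, $T_{\Omega,s}$ acts as a convolution in $t=\log r$, that the attendant integrals converge, and that one may insert smooth cut-offs near $0$ and $\infty$ without changing the sign of the Rayleigh quotient --- all of which rest on the positivity, exact scaling, and two-sided bounds for $T_{\Omega,s}$ (equivalently for $\mathcal{K}_{\Omega,s}$) supplied by Theorem~\ref{thm.main}.
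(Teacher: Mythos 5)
There is a genuine gap --- in fact several --- and it is worth saying first that the statement you are proving is stated in the paper as a \emph{conjecture}: the paper itself proves only Theorem~\ref{thm.low-dim} ($n\le 5$, and only under the additional hypothesis of axial symmetry), and Section~\ref{sec.7} explicitly leaves the $n=6$ endpoint as an unproved numerical condition in the spirit of \cite{CJK04}. Your first step, the ``reduction to one cone,'' is where the argument breaks: monotonicity of the homogeneity exponent $\lambda(\beta)$ only classifies $s$-homogeneous solutions \emph{within} the axially symmetric class $\Co(\beta)$; it gives no information whatsoever about general $s$-homogeneous cones, whose cross-sections can a priori be arbitrary subsets of $\mathbb{S}^{n-1}$. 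Axial symmetry is an \emph{assumption} in Theorem~\ref{thm.low-dim}, not a consequence of stability or homogeneity, and removing it is precisely (part of) what makes the conjecture open. Second, even granting axial symmetry, your own argument concedes that the comparison $m_{n,s}(0)<\kappa_s c_0$ is only claimed for $n\le 5$ and that ``the full conjectural range $n\le 6$ would follow only from the borderline case'' --- so the proposal does not prove the stated range even in the symmetric setting. Third, the key quantitative ingredient is asserted rather than established: the Mellin symbol $m_{n,s}(0)$ and the constant $c_0$ are \emph{not} available in closed form on the actual cone $\Co_{n,s}$, because they require the aperture $\beta_{n,s}$ and the Green function (equivalently $\mathcal{K}_{\Co_{n,s},s}$), neither of which is explicit; this is exactly the obstruction the authors identify in Section~\ref{sec.7}, where the same Mellin/multiplier reduction is carried out and the resulting inequality is left as something to be checked numerically. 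Identifying $m_{n,s}(0)$ with a half-space Gamma-function ratio does not control the cone quantity.

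It is also worth noting that the route the paper actually uses for the $n\le 5$ axially symmetric case is different from, and much softer than, your spectral scheme: instead of diagonalizing the boundary form \eqref{intro-large}, the authors pass to the extension variable (Proposition~\ref{prop.main_E}), insert the \emph{large} test function $F=\eta\,\partial_\s \bar u$ to obtain the clean inequality \eqref{eq.stab_E_2} via Proposition~\ref{prop.stab_E}, and then take $\eta=\s^{-\alpha}$ with cutoffs, which yields the condition $n<6$ by optimizing in $\alpha$ --- no kernel, aperture, or Green-function computation is needed. That argument is what motivates the conjecture; it does not prove it, and neither does your proposal: the missing ingredients are (i) any treatment of non--axially-symmetric cones, and (ii) a rigorous (or at least numerically verified) evaluation of the cone-dependent constants at the borderline dimension $n=6$, uniformly in $s\in(0,1)$.
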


As said above, Caffarelli, Jerison, and Kenig proved  in \cite{CJK04} that the analogous of Theorem~\ref{thm.low-dim} for $s=1$ holds up to dimension $n = 6$. 
We show in Section~\ref{sec.7} what would be the analogy for $s\in(0,1)$ to the approach of \cite{CJK04}, using our new stability condition.
We believe this could yield our previous result for $\mathcal{C}_{n,s}$ up to dimension $n = 6$, for all $s\in (0,1)$, but unfortunately this seems to require some delicate numerical computations.

\subsection{Ideas of the proofs}
\label{ss.local}

Let us present here the ideas of the proofs of our two main results, Theorems~\ref{thm.main} and \ref{thm.low-dim}.

\subsubsection{Ideas of the proof of Theorem~\ref{thm.main}.}
The proof of Theorem~\ref{thm.main} is done by constructing explicit competitors and computing the corresponding energy to deduce an expansion up to second order around a critical point. Roughly speaking, the inequality \eqref{eq.stabilityFin_C} corresponds to the excess energy at order $\eps^2$ of $u$ when perturbing the domain $\Omega$ by $\eps f$ in the normal direction. 

Indeed, given a solution $u$ with $\Omega = \{u > 0\}$ smooth, and given an arbitrary function $f\in C^\infty_c(\partial \Omega)$, we consider domain perturbations such that $\partial \Omega$ is stretched by $\eps f(z)$ in the normal direction at $z\in \partial\Omega$. In this way, we obtain a new domain $\Omega_\eps$ which is ``$\eps f$-close'' to $\Omega$. The energy of the new stretched $u$ can be lowered by making it $s$-harmonic on $\Omega^c_\eps$, so that we consider $v_\eps$ our $\eps$-close perturbation of $u$ (in the ``direction'' $f$) to be the solution to
\[
\left\{
\begin{array}{rcll}
\fls v_\eps & = & 0 & \quad\textrm{in } \Omega_\eps \cap B\\
v_\eps & = & 0 & \quad\textrm{in } \Omega_\eps^c \cap B\\
v_\eps & = & u & \quad\textrm{in } \R^n\setminus B. 
\end{array}
\right.
\]
Then we compute the expansion of the energy $\mathcal{J}_{\Lambda, B}(v_\eps)$ (see \eqref{eq.energy2}) in $\eps$:
\[
\mathcal{J}_{\Lambda, B}(v_\eps) = \mathcal{J}_{\Lambda, B}(u) + \eps A_1(u, f) + \eps^2 A_2(u, f) + o(\eps^2). 
\]
The first term, $A_1(u, f)$, corresponds to the first variation of the functional. Imposing that this term vanishes for all $f\in C^\infty_c(\partial\Omega)$ is what yields the constant fractional derivative condition on $\partial\Omega$. 

The second term, $A_2(u, f)$, corresponds to the second variation. The fact that $u$ is stable implies that $A_2(u, f)\ge 0$ for all $f\in C^\infty_c(\partial\Omega)$, and this yields the stability condition from our main result. Let us very briefly explain how to explicitly compute $A_1(u, f)$ and $A_2(u, f)$. 

We assume, for simplicity, that $f\ge 0$, so that we can separate between semi-norms and the measure of $\Omega\setminus \Omega_\eps$ as follows,
\[
\mathcal{J}_{\Lambda, B}(v_\eps) - \mathcal{J}_{\Lambda, B}(u) = [v_\eps]^2_{H^s(\R^n)} - [u]^2_{H^s(\R^n)}  - \Lambda^2|(\Omega\setminus \Omega_\eps)\cap B|.
\]
(In fact, each semi-norm could be infinite, but the difference can be computed, using \eqref{eq.energy2}.) For the second term in the previous expression, a simple geometric argument yields that 
\[
|(\Omega\setminus \Omega_\eps)\cap B| = \eps\int_{\partial\Omega} f + \frac{\eps^2}{2} \int_{\partial\Omega} H f^2 + o(\eps^2),
\]
where $H$ is the mean curvature of $\partial\Omega$ with respect to $\{u = 0\}$. Thus, we just need to expand the difference of semi-norms, which after some manipulations corresponds to 
\begin{equation}
\label{eq.expsemi}
[v_\eps]^2_{H^s(\R^n)} - [u]^2_{H^s(\R^n)} =  -\int_{(\Omega\setminus \Omega_\eps)\cap B} u \fls v_\eps. 
\end{equation}
Notice that the integral is performed in a region $\eps$-close to $\partial\Omega$ (and $\partial\Omega_\eps$). The value of the previous integral will depend on the function $u$, and more specifically, on the behaviour of $u$ near $\partial\Omega$. More precisely, from the boundary regularity for the fractional Laplacian in $C^{1,\alpha}$ domains we know that, if $z\in \partial\Omega_\eps$ is the projection of $x$ onto $\partial\Omega_\eps$, and we denote $d_\eps(x) = {\rm dist}(x, \Omega_\eps^c)$, then
\[
v_\eps(x) = U_0^\eps(z) d_\eps^s(x) + o(d_\eps^s),
\]
for some function $U_0^\eps\in C^0(\partial\Omega_\eps)$. We can now compute the expansion of $\fls v_\eps$ in $\Omega_\eps^c$, which is 
\[
\fls v_\eps(x) = \bar c_s U_0^\eps(z) \bar d_\eps^{-s}(x) + o(\bar d_\eps^{-s}),
\]
and where $\bar d_\eps = {\rm dist}(x, \Omega_\eps)$, and $\bar c_s$ is an explicit constant.  Notice that $v_0 = u$, so that plugging these expansions in \eqref{eq.expsemi} and using that $U_0^\eps = U_0^0 + o(\eps)/\eps$, $d_\eps = d_0 + o(\eps)$, we obtain that
\[
A_1(u, f) = C \int_{\partial\Omega} f(z) \left[c_s (U_0^0(z))^2 - \Lambda^2 \right] \, dz
\]
for some constant $c_s$. Imposing that $A_1(u, f)$ vanishes for all $f\in C^\infty_c(\partial\Omega)$ gives the constant fractional derivative condition on $\partial\Omega$. 

In order to obtain the term in $\eps^2$, $A_2(u, f)$, we need to consider the previous expansions at a higher order. Roughly, in this case we have that
\[
v_\eps(x) \thickapprox U_0^\eps(z) d_\eps^s(x) + U_1^\eps(z) d_\eps^{1+s}(x) + o(d_\eps^{1+s}).
\]
(There is also an extra tangential term, that ends up having no role.) Thus, the first step is to expand $\fls v_\eps(x)$ from here. This is a delicate argument done in Lemma~\ref{lem.form2}, from which, roughly,
\[
\fls v_\eps(x) \thickapprox \bar c_s U_0^\eps(z) \bar d_\eps^{-s}(x) + \bar c_* U_0^\eps(z) H(z) \bar d^{1-s}_\eps(x) + U_1^\eps(z) \bar d_\eps^{1-s}(x) + o(\bar d_\eps^{1-s}). 
\]
We again want to plug this in \eqref{eq.expsemi} to get the terms of order $\eps^2$. In this case, for the terms multiplying $\bar d_\eps^{1-s}$ we use, as before, that $U_0^\eps = U_0^0+o(\eps)/\eps$ and $U_1^\eps = U_1^0 + o(\eps)/\eps$, where now $U_0^0$ is constant and $U_1^0 = \partial_\nu(u /d^s)$ on $\partial\Omega$ (here, $\partial_\nu$ denotes the normal derivative to $\partial\Omega$). For the first term, we need a higher order expansion, in $\eps$, for $U_0^\eps$. That is, 
\[
U_0^\eps = U_0^0 + \eps \tilde A(u, f) + o(\eps). 
\]    
We can now compute $\tilde A(u, f)$ using that $(v_\eps - u)\eps^{-1}\to F_s$, as $\eps \downarrow 0$, and where $F_s$ solves
\begin{equation}
\label{eq.nicolaeq}
\left\{
\begin{array}{rcll}
\fls F_s & = & 0 & \quad\textrm{in } (\Omega \cap B)\\
F_s & = & 0 & \quad\textrm{in } (\Omega\cap B)^c\\
\displaystyle \frac{F_s}{d^{s-1}} & = & s f& \quad \textrm{on }\partial \Omega.
\end{array}
\right.
\end{equation}
(We remark that \eqref{eq.nicolaeq} is a Dirichlet-type problem for the fractional Laplacian involving large solutions, and was first studied in \cite{Aba15, Gru15}. When $s\uparrow1$, such problem converges to the classical Dirichlet problem for the Laplacian.) In particular, $\tilde A(u, f)$ depends on $u$ and $f$ through $U_0^1$ and $\partial_\nu(F/d^{s-1})$.

Putting all together, the stability condition $A_2(u, f)\ge 0$ is
\[
\frac{C_s}{\Lambda}\int_{\partial \Omega} \partial_\nu\left(\frac{u}{d^s}\right) f^2 \, d\sigma \ge \int_{\partial \Omega} f \partial_\nu\left(\frac{F_s}{d^{s-1}}\right)d\sigma.
\]
From here, and after some nontrivial manipulations, we can express $\partial_\nu\left(u/d^s\right)$ in terms of $\Omega$ through the Green function, to get our desired result, Theorem~\ref{thm.main}.

%
%
%

\subsubsection{Ideas of the proof of Theorem~\ref{thm.low-dim}.}

To prove Theorem~\ref{thm.low-dim}, we need a local formulation of the stability condition: an alternative form of the stability condition, as seen in the extension variable (introduced in Subsection~\ref{ss.extension}).

More precisely, if we extend to $\R^{n+1}_+$ as $(x, y)\in \R^n\times\R_+$, and we denote by $u = u(x)$ our solution (so that, as an abuse of notation $u:\R^n\times\{0\}\to \R$ and $\Omega\subset\R^n\times\{0\}$), then in Proposition~\ref{prop.main_E} we prove that our stability condition in the extended variable reads as
\begin{equation}
\label{eq.condFinB_E_I}
\begin{split}
\int_{\partial \Omega} \frac{F}{d^{s-1}} \bigg\{\partial_\nu \left(\frac{F}{d^{s-1}}\right) & -\frac{\Gamma(2+s)}{s\Lambda} \partial_\nu\left(\frac{u}{d^s}\right) \frac{F}{d^{s-1}}  -2^{s-1}\frac{1-s}{s}y^{1+s}L_a F \bigg\}\leq \\
& ~\le -\frac{d_s}{\Gamma(1+s)\Gamma(s)} \int_{\Omega} F \partial_y^a F - \frac{d_s}{\Gamma(1+s)\Gamma(s)} \int_{\{y > 0\}} F L_a F,
\end{split}
\end{equation}
for all $F$ with $F \equiv 0$ in $(\R^n\setminus \Omega)\times\{0\}$ and such that each of the previous terms is well-defined, and where $d_s$ is given by \eqref{eq.ds}. (We recall that $\nu$ denotes the unit inward normal to $\partial \Omega$.) 

Notice that we are interested in test functions $F$  that blow up like $d^{s-1}$ when approaching $\partial\Omega\times\{0\}$ (namely, behaving as the \emph{large} solutions introduced above). We also denote $u = u(x) = u(\s, \tau)$ where $\s^2 = x_1^2+ \dots + x_{n-1}^2$ and $\tau = x_n$, that is, $u$ is axially symmetric in the $x_n$-direction. 

Once condition \eqref{eq.condFinB_E_I} is established, we take 
\[F = \eta\, \partial_\s u \]
as a test function.
Here, we take $u$ to be its own $a$-harmonic extension towards $\{y > 0\}$, and denote $\partial_\s$ the  derivative in the $\s$ direction.
It is important to notice that $\partial_\s u$ is a \emph{large} solution of the type \eqref{eq.nicolaeq}.  

We show in Proposition~\ref{prop.stab_E} that, somewhat surprisingly, this yields a new and much simpler stability condition in the extended variable,
\begin{equation}
\label{eq.stab_E_2}
\int_{\{y > 0\}}  (\partial_\s u)^2 \,|\nabla \eta|^2\,y^{1-2s} \, dx\,dy
\ge (n-2) \int_{\{y > 0\}}  (\partial_\s u)^2 \,\eta^2 \,\s^{-2}\, y^{1-2s}\,dx\,dy 
\end{equation}
for all test functions $\eta$. 
By taking now 
\[\eta = \s^{-\alpha}\]
and optimizing in $\alpha$, we reach a contradiction with the stability condition for non-trivial solutions if $n < 6$.

\vspace{3mm}

The idea of taking $x\cdot\nabla u$ or $\partial_\s u$ in the stability condition goes back to \cite{CC04}, where Cabr\'e and Capella studied radial stable solutions of $-\Delta u=f(u)$.
More recently, this type of test function has been also used in \cite{CR13,CFRS20} in case of semilinear equations, and even in the classical one-phase problem in \cite{FR19} when seen as a limit of semilinear equations.
Finally, in case of nonlocal equations of the type $(-\Delta)^s u=f(u)$, this idea has been used in \cite{CDDS11,San18}.

Our proof here turns out to be much more delicate, since our stability condition \eqref{eq.condFinB_E_I}-\eqref{eq.nicolaeq} is quite different (and much more singular) than those for semilinear equations.
Still, we end up obtaining a simple stability condition \eqref{eq.stab_E_2} with no free boundary terms, in which we can then take appropriate test functions $\eta$.

\subsection{Structure of the paper}

The paper is organized as follows.

In Section~\ref{sec.2} we introduce some preliminary results that will be useful throughout the work, and we obtain the critically condition or first variation condition for the functional \eqref{eq.energy} with the explicit constant $\Gamma(1+s)$. In Section~\ref{sec.3} we then focus our attention on second order variations and obtain the stability condition Theorem~\ref{thm.main} (see also Proposition~\ref{prop.main}). In order to do that we use fine estimates for the expansion of the fractional Laplacian of an $s$-harmonic function outside the domain, in Lemma~\ref{lem.form2}. In Section~\ref{sec.4} we then use our main result, Theorem~\ref{thm.main}, to prove Corollary~\ref{cor.2D} on the instability of non-trivial cone-like solutions in $\R^2$.

In Section~\ref{sec.5} we express the previously obtained stability condition in $\R^n$ in terms of the extension variable towards $\R^{n+1}$, in Proposition~\ref{prop.main_E}. We then use it in Section~\ref{sec.6} to prove Theorem~\ref{thm.low-dim}, stating that axially-symmetric solutions are either one-dimensional or unstable, for dimensions up to $n = 5$. We finish, in Section~\ref{sec.7}, with what would be the analogous numerical stability condition approach developed by Caffarelli, Jerison, and Kenig in \cite{CJK04}, in the context of the fractional one-phase problem, and that could yield the optimal dimension $n = 6$ for the previous statement.

%
%
%

\section{Preliminaries and the first variation}
\label{sec.2}

In this section we introduce some preliminaries regarding the definitions of local minimizer, critical points, and stable solutions for \eqref{eq.energy}, as well as the Caffarelli-Silvestre extension.
Then, we find the first variation condition, Proposition~\ref{prop.main}, computing the explicit constant $\Gamma(1+s)$ in \eqref{eq.OPT}.

We start with the basic definitions for the energy functional \eqref{eq.energy}.

\subsection{Local minimizer, critical point, and stable solution}

\label{ss.crit_stable}

Let us define what we mean by local minimizer for the energy functional $\mathcal{J}_\Lambda$ (cf. \cite{EKPSS20}). 
Let $B\subset \R^n$ be a fixed ball. 
We want a function $u$ such that, under perturbations in $B$, $\mathcal{J}_\Lambda(u)$ cannot decrease its energy. 
In general, though, such energy could (and, in many cases, will) be infinite. 
To avoid this, we instead consider the associated functional $\mathcal{J}_{\Lambda, B}$  involving only those terms of $\mathcal{J}_\Lambda$ that could change under perturbations in $B$:
\begin{equation}
\label{eq.energy2}
\mathcal{J}_{\Lambda, B}(v) =  \frac{c_{n,s}}{2} \iint_{\R^{2n}\setminus (B^c)^2} \frac{(v(x) - v(y))^2}{|x-y|^{n+2s}}\, dx\, dy + \Lambda^2 |\{v > 0\}\cap B|. 
\end{equation}
We then say that $u\in L^1_{\rm loc}$ is a local minimizer of $\mathcal{J}_\Lambda$ in $B$ if 
\begin{equation}
\label{eq.loc_min}
\text{
$\mathcal{J}_{\Lambda, B}(u) \le \mathcal{J}_{\Lambda, B}(v)$ for all $v$ s.t. $v - u\in H^s(\R^n)$ and $u \equiv v$ in $\R^n \setminus B$.
}
\end{equation}
We say that $u$ is a global minimizer of $\mathcal{J}_\Lambda$, if it is a local minimizer for all $B\subset \R^n$.

Since the functional $\mathcal{J}_\Lambda$ is non-smooth, the notion of critical (and stable) points is delicate. We will always be dealing with weak/viscosity solutions to the problem, and in our assumptions we will include that the domain $\Omega \coloneqq \{u > 0\}$ is smooth around the points we want to deal with. Under these assumptions, in order to get the first (and second) variation of our functional it is enough to consider smooth domain variations. The definition of critical point and stable solution  presented here are made under the assumption that the previous hypotheses hold.

Given a domain variation $\Psi\in C^\infty_c(B; \R^n)$ we define
\[
u_\eps(x) \coloneqq u(x+\eps \Psi(x)).
\] 
We then say that $u\in L^1_{\rm loc}$ with $\Omega = \{u = 0\}$ smooth is a critical point (with respect to domain variations) of $\mathcal{J}_\Lambda$ if
\begin{equation}
\label{eq.criticalpoint}
\frac{d}{d\eps}\bigg|_{\eps = 0} \mathcal{J}_{\Lambda, B} (u_\eps)  = 0\quad\text{for all $B\subset \R^n$ and $\Psi\in C^\infty_c(B; \R^n)$}.
\end{equation}

Similarly, we say that $u\in L^1_{\rm loc}$ is a stable solution (with respect to domain variations) of $\mathcal{J}_\Lambda$ if it is a critical point, \eqref{eq.criticalpoint} and
\begin{equation}
\label{eq.stablesolution}
\frac{d^2}{d\eps^2}\bigg|_{\eps = 0} \mathcal{J}_{\Lambda, B} (u_\eps)  \ge 0\quad\text{for all $B\subset \R^n$ and $\Psi\in C^\infty_c(B; \R^n)$}.
\end{equation}

We now show how to ``localize'' the problem, by means of the Caffarelli-Silvestre extension for the fractional Laplacian.

\subsection{The extension variable}

\label{ss.extension}

While we will often work with the nonlocal formulation of the variational problem, \eqref{eq.energy}-\eqref{eq.energy2}-\eqref{eq.loc_min}, the fractional one-phase obstacle problem is sometimes referred (and studied) as the thin one-phase problem (see \cite{CRS10, DS12, DS15, DS15b, EKPSS20} among others). This is due to the Caffarelli-Silvestre  extension for the fractional Laplacian and fractional Sobolev norms (\cite{CS07}), that allows an equivalent formulation of the previous non-local variational problem as a local problem defined in one extra dimension. Namely, if we want to compute $\fls v$ for some $v:\R^{n}\to \R$, and we denote the points in $\R^{n+1}$ as $(x, y)\in \R^n\times\R$, we can consider the $a$-harmonic extension of $v$ towards $\R^{n+1}_+$. That is, a function $\bar v: \R^{n+1}\to \R$ such that
\[
L_a \bar v = 0\quad\text{for}\quad y > 0,\qquad \bar v(x, 0) = v(x),
\]
where 
\begin{equation}
\label{eq.La}
L_a \bar v \coloneqq {\rm div}(|y|^a\nabla \bar v),\qquad a  =1-2s\in (-1, 1). 
\end{equation}
Then,
\begin{equation}
\label{eq.FracDer}
\fls v(x) = -d_s \partial_y^a \bar v (x,0),\qquad\text{where} \quad \partial_y^a v(x, 0) =\lim_{y\downarrow 0} y^a\partial_y \bar v(x, y) 
\end{equation}
and we have denoted 
\begin{equation}
\label{eq.ds}
d_s = 2^{2s-1}\frac{\Gamma(s)}{\Gamma(1-s)}. 
\end{equation}
We also have the equivalence, in this case, 
\begin{equation}
\label{eq.fromdowntoup}
[v]_{H^s(\R^n)} = d_s [\bar v]_{H^1(\R^{n+1}_+, |y|^a)}
\end{equation}
where we have introduced the weighted Sobolev space $H^1(\Omega,|y|^a)$ with semi-norm,
\[
[w]^2_{H^1(\Omega, |y|^a)} = \int_{\Omega} |\nabla w|^2|y|^a\, dx\, dy. 
\] 

Thus, we define the following local energy functional in $\R^{n+1}$
\begin{equation}
\label{eq.energy_loc}
\mathcal{I}_{\Lambda^*}(w, \R^{n+1}) = [w]^2_{H^1(\R^{n+1}, |y|^a)} + \Lambda_*^2\, \mathcal{H}^n\big(\{(x, 0)\in \R^{n+1} : w(x, 0) > 0 \}\big),
\end{equation}
where $\mathcal{H}^n$ is the $n$-dimensional Hausdorff measure. We can analogously define a localized energy functional in $B\subset \R^{n+1}$ as 
\begin{equation}
\label{eq.energy_loc2}
\mathcal{I}_{\Lambda^*}(w, B) = [w]^2_{H^1(B, |y|^a)} + \Lambda_*^2\, \mathcal{H}^n\big(\{(x, 0)\in B : w(x, 0) > 0 \}\big),
\end{equation}
where now, due to the local nature of the problem, one can really focus only on the set $B$ without intervention from $B^c$. 

We say that $u_*$ is a local minimizer for $\mathcal{I}_{\Lambda_*}$ in $B\subset \R^{n+1}$ if $\mathcal{I}_{\Lambda_*}(u_*, B) \le \mathcal{I}_{\Lambda_*}(w, B)$ for all $w$ such that $u_* = w$ in $B^c$. Similarly, we say that $u_*$ is a global minimizer for $\mathcal{I}_{\Lambda*}$ if it is a local minimizer for all $B\subset \R^{n+1}$. 

We note that, due to the equivalence \eqref{eq.fromdowntoup} and the fact that $a$-harmonic functions are local minimizers of the weighted Dirichlet energy, the $a$-harmonic extension of a global minimizer of $\mathcal{J}_{\Lambda_*}$ is a global minimizer for $\mathcal{I}_{\Lambda}$, when
\begin{equation}
\label{eq.lambdastar}
2 \Lambda^2 = d_s \Lambda_*^2.
\end{equation}
The extra factor 2 appears because in the equivalence \eqref{eq.fromdowntoup} we consider only a half-space $\R^{n+1}_+$. In particular, for $s = \frac12$ (when $a = 0$ and $L_a = \Delta$), $\sqrt{2}\Lambda = \Lambda_*$.

\subsection{The first variation}

The first variation (criticality condition) for the fractional one-phase problem is the following:

\begin{prop}
\label{prop.crs}
Let $u$ be a local minimizer to \eqref{eq.energy}, in the sense \eqref{eq.energy2}-\eqref{eq.loc_min}; or a critical point to \eqref{eq.energy} in $B\subset\R^n$, in the sense \eqref{eq.criticalpoint}. Let $\Omega \coloneqq \{u > 0\}$, and let us suppose that $\partial \Omega \cap B$ is at least $C^{1,\alpha}$. 
Then, $u$ satisfies 
\begin{equation}
\label{eq.OPT_2}
\left\{
\begin{array}{rcll}
\fls u & = & 0 & \quad\textrm{in } \Omega\cap B\\
u & = & 0 & \quad\textrm{in } \Omega^c \cap B \vspace{1mm}\\
\Gamma(1+s) \displaystyle{\frac{u}{d^{s}}} & = & \Lambda& \quad \textrm{on }\partial \Omega\cap B.
\end{array}
\right.
\end{equation}
\end{prop}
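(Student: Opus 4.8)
The plan is to derive \eqref{eq.OPT_2} by computing the first variation $\frac{d}{d\eps}\big|_{\eps=0}\mathcal{J}_{\Lambda,B}(u_\eps)$ for domain variations $u_\eps(x)=u(x+\eps\Psi(x))$ and imposing that it vanishes. The interior equation $\fls u=0$ in $\Omega\cap B$ and $u=0$ in $\Omega^c\cap B$ are the easy part: they follow from standard inner variations (testing against $C^\infty_c$ perturbations supported strictly inside $\Omega\cap B$, which do not see the free boundary, gives $\fls u=0$ weakly; the condition $u=0$ in $\Omega^c$ is built into the definition of $\Omega=\{u>0\}$ together with the minimality/criticality, as is standard in one-phase problems). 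So the whole content is the free boundary condition $\Gamma(1+s)\,u/d^s=\Lambda$ on $\partial\Omega\cap B$, and in particular pinning down the constant.

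For that, I would split the energy difference as in the ``Ideas of the proofs'' section: writing $v_\eps$ for the $s$-harmonic replacement of $u_\eps$ in $\Omega_\eps^c\cap B$ (which only lowers the energy and hence does not affect criticality), one has
\[
\mathcal{J}_{\Lambda,B}(v_\eps)-\mathcal{J}_{\Lambda,B}(u)=\big([v_\eps]^2_{H^s}-[u]^2_{H^s}\big)_{\rm loc}-\Lambda^2\,|(\Omega\setminus\Omega_\eps)\cap B|+o(\eps),
\]
where the semi-norm difference is understood through \eqref{eq.energy2}. The measure term expands as $\eps\int_{\partial\Omega}(\Psi\cdot\nu)\,d\sigma+o(\eps)$ by a direct geometric computation (only the normal component of $\Psi$ on $\partial\Omega$ matters). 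For the Dirichlet part I would use the identity $[v_\eps]^2_{H^s}-[u]^2_{H^s}=-\int_{(\Omega\setminus\Omega_\eps)\cap B}u\,\fls v_\eps$ (an integration-by-parts valid because $v_\eps$ is $s$-harmonic where $u_\eps>0$ and vanishes where it is $0$; the cross terms telescope). The key asymptotic input is the boundary behaviour of $s$-harmonic functions in $C^{1,\alpha}$ domains from \cite{RS14,RS17}: $u(x)=U_0(z)\,d^s(x)+o(d^s)$ near $\partial\Omega$ with $U_0=u/d^s|_{\partial\Omega}$, and correspondingly $\fls u(x)= \bar c_s\,U_0(z)\,\bar d^{-s}(x)+o(\bar d^{-s})$ for $x\in\Omega^c$ near $\partial\Omega$, with an \emph{explicit} constant $\bar c_s$. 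Integrating $u\cdot\fls v_\eps\sim U_0^2\, d^s\bar d^{-s}$ over the thin sliver $(\Omega\setminus\Omega_\eps)\cap B$ of width $\eps(\Psi\cdot\nu)$ produces, after the one-dimensional model computation $\int_0^1 t^s(1-t)^{-s}\,dt=\frac{s\pi}{\sin(\pi s)}\cdot(\text{normalization})$ type integral, a term $\eps\, c_s\int_{\partial\Omega}U_0^2\,(\Psi\cdot\nu)\,d\sigma$. Setting the full first variation to zero for all $\Psi$ forces $c_s U_0^2=\Lambda^2$ pointwise on $\partial\Omega\cap B$, i.e. $\big(u/d^s\big)^2=\Lambda^2/c_s$ on $\partial\Omega$.

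The remaining step is to identify $\sqrt{c_s}$ with $\Gamma(1+s)$, i.e. to track all the normalization constants: $c_{n,s}$ in the semi-norm, $\bar c_s$ from the $C^{1,\alpha}$-boundary expansion of $\fls(d^s)$, and the value of the model integral in the sliver. I would do this most cleanly on the one-dimensional half-space profile $u=(x_1)_+^s$: there $\fls\big((x_1)_+^s\big)=0$ in $\{x_1>0\}$ and $\fls\big((x_1)_+^s\big)(x)= \Gamma(1+s)\Gamma(1-s)^{-1}\cdot(\text{const})\,( (-x_1)_+)^{-s}$ for $x_1<0$ — a known explicit identity — so all constants can be fixed by this calibration, and the answer $\Gamma(1+s)\,u/d^s=\Lambda$ drops out.

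The main obstacle I anticipate is precisely this bookkeeping of constants — previous literature \cite{CRS10} got it wrong — so I would be careful to: (i) use the exact normalization $c_{n,s}$ as in \eqref{eq.seminorm}; (ii) use a sharp (not merely qualitative) version of the boundary expansion $\fls u \sim \bar c_s (u/d^s)\bar d^{-s}$ with the constant $\bar c_s$ explicitly computed from the 1D model; and (iii) evaluate the sliver integral exactly rather than just up to a constant. A secondary technical point is justifying that replacing $u_\eps$ by its $s$-harmonic extension $v_\eps$ in $\Omega_\eps^c$ and passing the various limits under the integral sign is legitimate — this rests on the $C^s$ regularity of $u$, the $C^{2,\alpha}$ (here only $C^{1,\alpha}$) regularity of $\partial\Omega$, and standard compactness for $a$-harmonic (equivalently $s$-harmonic) functions, and can alternatively be carried out in the Caffarelli–Silvestre extension where everything is a local, uniformly elliptic (degenerate) problem.
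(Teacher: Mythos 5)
Your proposal is correct and follows essentially the same route as the paper: domain variations $\Omega_\eps$ with an $s$-harmonic replacement $v_\eps$, the reduction $[v_\eps]^2_{H^s}-[u]^2_{H^s}=-\int_{\Theta_\eps} u\,\fls v_\eps$ (the paper's Lemma~\ref{lem.form1}), the sliver integration using $\int_0^1 t^s(1-t)^{-s}\,dt=\Gamma(1+s)\Gamma(1-s)$, and calibration of $\bar c_s$ on the 1D profile $(x_1)_+^s$ (the paper's Lemma~\ref{lem.constants}, where $\bar c_s=-\Gamma(1+s)/\Gamma(1-s)$ is derived explicitly). The only cosmetic difference is that you propose to fix the overall constant $c_s=\Gamma(1+s)^2$ by a final calibration, whereas the paper tracks all constants explicitly throughout; both yield $\Gamma(1+s)\,u/d^s=\Lambda$.
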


\begin{rem}
\label{rem.1}
Even if it is already known, we will prove the previous result for two reasons: on the one hand, we think it is an opportunity to introduce some of the expressions that will be used later on; and on the other hand, we compute the explicit constant for the normal derivative (depending on $\Lambda$). 
The constant appearing in \cite{CRS10} is incorrect, unfortunately, because of a computational mistake in the derivation. (Notice that the constant obtained here is actually simpler than the one in \cite{CRS10}.)
\end{rem}
\begin{rem} 
\label{rem.2}
Our constant for the fractional derivative coincides with the one in \cite[Proposition 3.13]{DS15} in the case $s = \frac12$, where they obtain that the (fractional) normal derivative is $\sqrt{2\pi^{-1}}$ for minimizers of \eqref{eq.energy_loc2} with $\Lambda_* = 1$. Using that from \eqref{eq.lambdastar} $\Lambda = 2^{-\frac12}$ in this case, we see that both results are the same. 
\end{rem}
\begin{rem}
While the previous result is originally proved for minimizers in \cite{CRS10}, we do it also for critical points. Both for us, and for \cite{CRS10}, the proof is the same for minimizers and critical points. We think, however, that it is an opportunity to introduce the competitors that will be used later on for the stable solutions. Also, our proof of the first variation condition does not use the Caffarelli-Silvestre extension of the fractional Laplacian.
\end{rem}

Let us start by a couple of lemmas that will be useful throughout the proof of Proposition~\ref{prop.crs}. 

The first lemma is a simple (but useful) identity involving fractional semi-norms and the fractional Laplacian.

\begin{lem}
\label{lem.form1}
Let $u$ and $v$ be such that $u \equiv v$ in $\R^n\setminus B$. Then
\begin{align*}
\frac{c_{n,s}}{2}\iint_{\R^{2n}\setminus (B^c)^2} \frac{(v(x)-v(y))^2}{|x-y|^{n+2s}}\, dx\, dy -\frac{c_{n,s}}{2}&\iint_{\R^{2n}\setminus (B^c)^2}  \frac{(u(x)-u(y))^2}{|x-y|^{n+2s}}\, dx\, dy  = \\
& = \int_B (v-u) \fls (v+u).
\end{align*}
\end{lem}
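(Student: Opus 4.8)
\textbf{Proof plan for Lemma~\ref{lem.form1}.} The identity is an algebraic manipulation of the quadratic form, so the plan is to reduce it to a polarization-type computation and then identify the resulting double integral with $\int_B(v-u)\fls(v+u)$.

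First I would set $w = v - u$, which by hypothesis vanishes on $\R^n\setminus B$, and write $v = u + w$. Expanding the difference of the two (truncated) Gagliardo forms,
\[
(v(x)-v(y))^2 - (u(x)-u(y))^2 = \big((v-u)(x) - (v-u)(y)\big)\big((v+u)(x)-(v+u)(y)\big),
\]
which is just the factorization $p^2 - q^2 = (p-q)(p+q)$ applied pointwise with $p = v(x)-v(y)$, $q = u(x)-u(y)$. Hence the left-hand side equals
\[
\frac{c_{n,s}}{2}\iint_{\R^{2n}\setminus(B^c)^2} \frac{\big(w(x)-w(y)\big)\big((v+u)(x)-(v+u)(y)\big)}{|x-y|^{n+2s}}\,dx\,dy.
\]

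Next I would symmetrize. Expanding the product $\big(w(x)-w(y)\big)\big(g(x)-g(y)\big)$ with $g = v+u$ gives $w(x)g(x) + w(y)g(y) - w(x)g(y) - w(y)g(x)$; by symmetry of the domain $\R^{2n}\setminus(B^c)^2$ and the kernel under $(x,y)\mapsto(y,x)$, the first two terms each contribute the same, and likewise the last two. So the integral becomes
\[
c_{n,s}\iint_{\R^{2n}\setminus(B^c)^2} \frac{w(x)\big(g(x)-g(y)\big)}{|x-y|^{n+2s}}\,dx\,dy.
\]
Now since $w \equiv 0$ outside $B$, the outer $x$-integration is effectively over $x\in B$; and for such $x$ the inner integral over $y\in\R^n$ reconstructs $c_{n,s}PV\int_{\R^n}\frac{g(x)-g(y)}{|x-y|^{n+2s}}\,dy = \fls g(x)$. (Restricting $x$ to $B$ also automatically removes the excluded region $(B^c)^2$, since that region requires $x\in B^c$.) This yields $\int_B w\,\fls g = \int_B (v-u)\fls(v+u)$, as claimed.

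The main point requiring care — the only real obstacle — is the justification of the $PV$ in passing from the symmetrized double integral to $\int_B w\,\fls g$: the inner integral $\int \frac{g(x)-g(y)}{|x-y|^{n+2s}}dy$ is only conditionally convergent near $y=x$ for $s\ge \tfrac12$, so one should either interpret all integrals in the principal-value sense throughout (cutting out $\{|x-y|<\delta\}$ and letting $\delta\downarrow0$, noting the cross terms cancel by antisymmetry of $g(x)-g(y)$ in the excised annulus), or invoke enough regularity of $u,v$ near $\partial B$ to make Fubini and the $PV$ limit legitimate. Since in the application $u,v$ are $s$-harmonic (hence smooth) in the relevant region, this is harmless, but it is the step I would write out most carefully.
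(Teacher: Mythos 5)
Your proof is correct, but it takes a genuinely different route from the paper's. You argue directly: polarize $(v(x)-v(y))^2-(u(x)-u(y))^2 = (w(x)-w(y))(g(x)-g(y))$ with $w=v-u$, $g=v+u$, symmetrize the domain $\R^{2n}\setminus(B^c)^2$ and kernel under $(x,y)\mapsto(y,x)$, use $w\equiv 0$ on $B^c$ to pin the outer variable in $B$ (which automatically keeps $(x,y)$ out of $(B^c)^2$), and reconstruct the principal-value integral defining $\fls g$. The paper instead invokes a nonlocal integration-by-parts identity from \cite{DRV17},
\[
\langle u, v\rangle_{H^s(B)} = \int_B v\,\fls u + \int_{B^c} v\,\mathcal{N}_s u,\qquad \mathcal{N}_s u(x)=c_{n,s}\int_B\frac{u(x)-u(y)}{|x-y|^{n+2s}}\,dy,
\]
and then iterates it, using $u\equiv v$ on $B^c$ to discard the $\mathcal{N}_s$ boundary terms. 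Your approach is more elementary and self-contained (no appeal to the nonlocal normal derivative $\mathcal{N}_s$ or an external reference), at the modest cost of having to handle the principal value explicitly; the paper's is shorter once the \cite{DRV17} identity is taken as a black box, and it sidesteps the PV issue by working with the already-regularized bilinear form. You correctly flag the one place requiring care: symmetrizing and then pulling the inner $y$-integral into $\fls g(x)$ is only legitimate as a limit after excising $\{|x-y|<\delta\}$; since the truncated domain is still symmetric under the swap and $u,v$ are smooth in the relevant region, this is harmless, exactly as you say.

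One small tightening worth making if you write this up: avoid phrasing the symmetrization as if the four terms $w(x)g(x)$, $w(y)g(y)$, $-w(x)g(y)$, $-w(y)g(x)$ are integrated separately (each is individually divergent); instead symmetrize the grouping $w(x)(g(x)-g(y)) - w(y)(g(x)-g(y))$ on the $\delta$-truncated domain, where everything is absolutely convergent, and only then let $\delta\downarrow 0$.
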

\begin{proof}
We are going to use the following identity from \cite{DRV17}:
\[
\langle u, v\rangle_{H^s(B)} = \int_B v\fls u + \int_{B^c} v \mathcal{N}_s u,
\]
where we have denoted by 
\begin{equation}
\label{eq.notation}
\langle u, v\rangle_{H^s(B)} \coloneqq \frac{c_{n,s}}{2} \iint_{\R^{2n}\setminus (B^c)^2} \frac{(u(x)- u(y))(v(x)-v(y))}{|x-y|^{n+2s}}\, dx\, dy 
\end{equation}
and 
\[
\mathcal{N}_s u = c_{n,s}\int_B \frac{u(x) - u(y)}{|x-y|^{n+2s}}\, dy\quad\text{for}\quad x\in B^c.
\]
Thus, we have  (using that $u \equiv v$ in $B^c$)
\begin{align*}
\langle v, v\rangle_{H^s(B)} & = \int_B v\fls v + \int_{B^c} v \mathcal{N}_s v = \int_B v\fls v + \int_{B^c} u \mathcal{N}_s v \\
& = \int_B (v-u)\fls v + \langle v, u\rangle_{H^s(B)}\\
& = \int_B (v-u)\fls v + \int_\Omega v \fls u +\int_{B^c} v\mathcal{N}_s u \\
& = \langle u, u\rangle_{H^s(B)} + \int_B (v-u) \fls (v+u),
\end{align*}
as wanted. 
\end{proof}

The second lemma is the first order expansion of the fractional Laplacian of an $s$-harmonic function outside of the domain. Here, we denote $d(x) = {\rm dist}(x, \partial \Omega)$.

\begin{lem}
\label{lem.constants}
Let $u_+ = \max\{u, 0\}$ and $u_- = -\min\{u, 0\}$. Then
\begin{equation}
\label{eq.constant0}
\fls (x_1)_+^{s} = \bar c_s (x_1)_-^{-s}\quad\text{with}\quad \bar c_s = -\frac{\Gamma(1+s)}{\Gamma(1-s)}.
\end{equation} 

Moreover, if $\Omega\subset \R^n$ is $C^{1,\alpha}$, with $0\in \partial \Omega$, and $u$ solves 
\[
\left\{
\begin{array}{rcll}
\fls u & = & 0 & \quad\textrm{in } B_1\cap \Omega\\
u & = & 0 & \quad\textrm{in } B_1\setminus \Omega,
\end{array}
\right.
\]
then there exists $U_0\in\R$ such that
\[
u(x) = U_0 d^s(x) + O(|x|^{s+\alpha})\quad\text{in}\quad \Omega,
\]
and 
\[
\fls u(x)  = \bar c_s U_0 d^{-s}(x) + O(|x|^{\alpha})\,d^{-s}(x) \quad\text{in}\quad B_1\setminus \Omega,
\]
where $\bar c_s$ is given by \eqref{eq.constant0}. 
\end{lem}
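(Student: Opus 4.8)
For the identity \eqref{eq.constant0}, since $(x_1)_+^s$ depends only on $x_1$, the computation of $\fls(x_1)_+^s$ reduces to a one--dimensional one: integrating out the $(n-1)$ transversal variables turns $c_{n,s}$ into $c_{1,s}$ and leaves the one--dimensional principal value integral. For $x_1>0$ the function is $s$--harmonic on the half--line, so $\fls(x_1)_+^s=0$ there; for $x_1<0$ the integral is $-s$--homogeneous in $|x_1|$, hence equal to $\bar c_s\,(x_1)_-^{-s}$. To pin down $\bar c_s$ one evaluates at $x_1=-1$: there is no principal value issue (the integrand vanishes at the only singular point $t=-1$) and
\[
\fls(x_1)_+^s(-1)=-c_{1,s}\int_0^\infty\frac{t^s}{(1+t)^{1+2s}}\,dt=-c_{1,s}\,\frac{\Gamma(s+1)\Gamma(s)}{\Gamma(2s+1)}=-\frac{\Gamma(1+s)}{\Gamma(1-s)},
\]
the last equality following from the expression of $c_{1,s}$ in \eqref{eq.seminorm} and the Legendre duplication formula. (Equivalently one may argue via the Caffarelli--Silvestre extension $\rho^s\Phi(\varphi)$ in polar coordinates of the $(x_1,y)$--plane, solving a hypergeometric ODE for $\Phi$; the normalization can be double--checked at $s=\tfrac12$, where the extension is $\mathrm{Re}\sqrt{x_1+iy}$ and $\bar c_{1/2}=-\tfrac12$.)

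The expansion of $u$ inside $\Omega$ is a direct consequence of the fine boundary regularity for the fractional Laplacian in $C^{1,\alpha}$ domains \cite{RS14,RS17}: since $\fls u=0$ (bounded) and $u$ is bounded, $u/d^s$ extends to a $C^\alpha$ function up to $\partial\Omega$ in a neighbourhood of $0$. Setting $U_0:=(u/d^s)(0)$ and using $d(x)\le|x|$ (as $0\in\partial\Omega$) gives $|u(x)-U_0\,d^s(x)|=d^s(x)\,|(u/d^s)(x)-U_0|\le C\,d^s(x)\,|x|^\alpha\le C\,|x|^{s+\alpha}$ in $\Omega$.

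For the expansion of $\fls u$ outside $\Omega$, note first that if $d(x)$ is bounded below the claim is trivial since $\fls u$ is then bounded; so we may assume $d(x_0)$ small and also, without loss of generality, $\alpha<s$. Fix $x_0\in B_{1/2}\setminus\Omega$, let $z_0\in\partial\Omega$ be its projection and $\rho:=|x_0|$ (so $d(x_0)\le\rho$, $|z_0|\le2\rho$), and rescale: $\tilde u(X):=\rho^{-s}u(z_0+\rho X)$, $\tilde\Omega:=\rho^{-1}(\Omega-z_0)$. Then $\fls\tilde u=0$ in $\tilde\Omega$ on a large ball, $\tilde u=0$ off $\tilde\Omega$, $0\in\partial\tilde\Omega$, and because $\Omega$ is $C^{1,\alpha}$ the boundary $\partial\tilde\Omega$ is, in $B_2$, a graph over its tangent plane at $0$ with $C^{1,\alpha}$ norm $\le C\rho^\alpha=:\eta$; moreover $\tilde u/\tilde d^s$ has $C^\alpha$ seminorm $\le C$ on $B_1$ (scale--invariant form of the previous step) with $(\tilde u/\tilde d^s)(0)=(u/d^s)(z_0)=:U_0^{z_0}$, and $|U_0^{z_0}-U_0|\le C|z_0|^\alpha\le C\eta$. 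Undoing the scaling via $\fls u(x_0)=\rho^{-s}\fls\tilde u(\tilde x_0)$ and $d(x_0)^{-s}=\rho^{-s}\tilde\delta^{-s}$, where $\tilde x_0:=\rho^{-1}(x_0-z_0)$, $\tilde\delta:=|\tilde x_0|=d(x_0)/\rho\le1$, it suffices to prove
\[
\bigl|\fls\tilde u(\tilde x_0)-\bar c_s\,U_0^{z_0}\,\tilde\delta^{-s}\bigr|\le C\,\eta\,\tilde\delta^{-s}.
\]

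This rescaled estimate is the technical core. Since $\tilde u(\tilde x_0)=0$ one has $\fls\tilde u(\tilde x_0)=-c_{n,s}\int_{\tilde\Omega}\tilde u(y)\,|\tilde x_0-y|^{-n-2s}\,dy$, and one compares $\tilde u$ with the half--space model $w_0:=U_0^{z_0}(X_1)_+^s$ in coordinates where the tangent plane at $0$ is $\{X_1=0\}$ and $\tilde\Omega$ lies on the side $\{X_1>0\}$ to first order. By Part~1, $\fls w_0(\tilde x_0)=\bar c_s\,U_0^{z_0}\,|(\tilde x_0)_1|^{-s}=\bar c_s\,U_0^{z_0}\,\tilde\delta^{-s}(1+O(\eta))$, using that the projection of $\tilde x_0$ is $0$ and the boundary is $\eta$--flat. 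For the difference $\tilde u-w_0$ (which also vanishes at $\tilde x_0$) one splits $\fls(\tilde u-w_0)(\tilde x_0)$ into: the bulk $\tilde\Omega\cap B_1$, where the $C^\alpha$ bound on $\tilde u/\tilde d^s$ together with the $C^{1,\alpha}$--flatness of $\partial\tilde\Omega$ give $|\tilde u-w_0|(y)\le C\eta\,|y|^{s+\alpha'}$ for some $\alpha'>0$; the thin symmetric difference $\tilde\Omega\,\triangle\,\{X_1>0\}$, which inside $B_r$ is contained in $\{|X_1|\le C\eta r^{1+\alpha}\}$ and where $|\tilde u|+|w_0|\le C|X_1|^s$; and the far region $\{|y|\ge1\}$, where $|\tilde u-w_0|$ is controlled by the (sub)linear growth of $u$. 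Feeding these into the singular integral --- with the usual care near $y=\tilde x_0$, where the difference of two functions each comparable to a $C^\alpha$ factor times $d^s$ is integrable against $|\tilde x_0-y|^{-n-2s}$ --- each piece contributes at most $C\eta\,\tilde\delta^{-s}$ (the far piece being $O(1)\le C\tilde\delta^{-s}$, absorbed since $\tilde\delta\le1$ and $\alpha<s$), which proves the estimate. The main obstacle is precisely this last step: quantifying, uniformly as $\tilde\delta\downarrow0$, the discrepancy between $\tilde u$ and its half--space model both in the interior and across the free boundary, for which the quantitative $C^{1,\alpha}$ boundary regularity of \cite{RS14,RS17} is essential.
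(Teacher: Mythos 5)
Your treatment of the identity \eqref{eq.constant0} (one--dimensional reduction, evaluation at $x_1=-1$, duplication formula) and of the interior expansion $u=U_0\,d^s+O(|x|^{s+\alpha})$ (via the $C^\alpha$ regularity of $u/d^s$ from \cite{RS14,RS17}) is the same as the paper's. The difference is in the exterior expansion, where the paper itself is terse: it evaluates at $x=z-t\nu_z$, uses the interior expansion near $z$ on a \emph{fixed}-size ball, and absorbs the tail (which contributes $O(1)$, uniformly in $z$ and $t$) into $O(t^{-s+\alpha})$ since $t^{-s+\alpha}\to\infty$ as $t\downarrow0$ for $\alpha<s$; the paper does not rescale. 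You rescale around $z_0$ by $\rho=|x_0|$, which is a legitimate alternative organization, but it changes the bookkeeping of the ``far'' region in a way your proposal does not handle correctly.

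Here is the gap. After rescaling, your far region $\{|y|\ge1\}$ corresponds to $\{|\zeta-z_0|\ge\rho\}$ in the original variables, which includes the whole intermediate annulus $\rho<|\zeta-z_0|<\tfrac12$ where the interior expansion of $u$ is still available. If on this region one controls $|\tilde u - w_0|$ only by the growth of $u$, as you state, the bound on the far piece is $O(1)$ in rescaled variables, i.e.\ $O(\rho^{-s})$ for $\fls u(x_0)-\bar c_s U_0^{z_0}d(x_0)^{-s}$. This is \emph{not} $\le C|x_0|^\alpha d(x_0)^{-s}$ in general. Concretely, take $x_0=-t\nu$ along the normal at the origin, so $z_0=0$, $\rho=t$, $\tilde\delta=1$; your target becomes $\bigl|\fls\tilde u(-\nu)-\bar c_sU_0\bigr|\le C\eta=Ct^\alpha$, while your far piece is $O(1)$, which does not tend to $0$. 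The parenthetical ``absorbed since $\tilde\delta\le1$ and $\alpha<s$'' does not help: $\eta\,\tilde\delta^{-s}\ge\eta=C\rho^\alpha\to0$ as $\rho\downarrow0$, whereas $O(1)$ does not.

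The estimate is salvageable, but one needs the interior expansion $u(\zeta)=U_0^{z_0}\,d^s(\zeta)+O(|\zeta-z_0|^{s+\gamma})$, together with the $C^{1,\alpha}$-flatness of $\partial\Omega$, on the \emph{entire} annulus $\rho\lesssim|\zeta-z_0|\lesssim1$, not just in the rescaled unit ball. Doing so produces a genuine positive power of $\rho$ in the rescaled far piece (of size $\rho^{\gamma}$ or $\rho^{s\alpha}$, depending on how the $s$-H\"older continuity of $t\mapsto t^s$ enters when comparing $d^s$ with $(y_1)_+^s$), which is then absorbed after possibly shrinking the final exponent $\alpha$ in $O(|x|^\alpha)d^{-s}$, harmless for the Lemma. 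The cleaner alternative, which is the paper's implicit route, is not to rescale at all: keep the near/far cutoff at a scale independent of $x_0$, in which case the far tail is a uniform $O(1)$ and is absorbed because $|x_0|^{\alpha}d(x_0)^{-s}\ge|x_0|^{\alpha-s}\to\infty$ as $x_0\to0$.
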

\begin{proof}
Notice that 
\begin{align*}
\fls v(x) & = -\frac{c_{n,s}}{2} \int_{\R^n} \frac{v(x+y)+v(x-y)-2v(x)}{|y|^{n+2s}} dy\\
& = -\frac{c_{n,s}}{2}\int_{\mathbb{S}^{n-1}} \frac12\int_{-\infty}^\infty \frac{v(x+r\theta)+v(x-r\theta)-2v(x)}{r^{1+2s}} dr\, d\theta.
\end{align*}

Suppose now that $v(x) = v(x_n)$ so 
\begin{align*}
\fls v (x) & = -\frac{c_{n,s}}{2}\int_{\mathbb{S}^{n-1}} \frac12\int_{-\infty}^\infty \frac{v(x_n+r\theta_n)+v(x_n-r\theta_n)-2v(x_n)}{r^{1+2s}} dr\, d\theta\\
& = \frac{c_{n,s}}{2c_{1,s}}\int_{\mathbb{S}^{n-1}}\fls_\R v(x_n + \theta_n\cdot)\, d\theta = \frac{c_{n,s}}{2c_{1,s}} \int_{\mathbb{S}^{n-1}}|\theta_n|^{2s}\, d\theta \fls_\R v(x_n),
\end{align*}
where $\fls_\R$ denotes the one-dimensional fractional Laplacian. 
Now notice that 
\begin{align*}
 \int_{\mathbb{S}^{n-1}}|\theta_n|^{2s}\, d\theta& = 2|\mathbb{S}_{n-2}|\int_0^{\pi/2} (\sin(\theta))^{2s}(\cos(\theta))^{n-2}\, d\theta \\
 & = |\mathbb{S}_{n-2}| \int_0^1 t^{s-\frac12} (1-t)^{\frac{n-3}{2}}\, dt 
 = |\mathbb{S}_{n-2}|\frac{\Gamma\left(\frac{n-1}{2}\right) \Gamma\left(s+\frac12\right)}{\Gamma\left(\frac{n}{2} + s\right)}.
 \end{align*}

Notice also that 
\[
\frac{c_{n,s}}{c_{1,s}} = \frac{\Gamma\left(\frac{n}{2}+s\right)}{\pi^{\frac{n-1}{2}}\Gamma\left(s+\frac12\right)},\qquad |\mathbb{S}^{n-2}| = \frac{2\pi^{\frac{n-1}{2}}}{\Gamma\left(\frac{n-1}{2}\right)}.
\]
Putting it all together, 
\[
\fls v (x) = \fls_\R v(x_n).
\]
Now, if we denote $v(t) = t^s_+$ we can compute
\[
\fls (x_1)^s_+ = (\fls_\R v)(x_1) = \bar c_s (x_1)_-^{-s}.
\]
and $\bar c_s$ is simply given by $(\fls_\R v)(-1)$,
\[
\bar c_s = (\fls_\R v)(-1) = c_{1,s}\int_{0}^\infty\frac{-t^s}{(1+t)^{1+2s}}dt.
\]
Let us compute with the change of variable $t\mapsto z = \frac{1}{1+t}$
\[
\int_0^\infty\frac{t^s}{(1+t)^{1+2s}}dt= \int_0^1 (1-z)^s z^{s-1}\, dz= \frac{\Gamma(s+1)\Gamma(s) }{\Gamma(2s+1)}. 
\]
Thus, plugging in the value of $c_{1,s}$, 
\[
\bar c_s  = -c_{1,s} \frac{\Gamma(s+1)\Gamma(s) }{\Gamma(2s+1)} = -\frac{\Gamma(1+s)}{\Gamma(1-s)},
\]
where we are also using the duplication formula for the gamma function,
\[
\Gamma(2z) = \Gamma(z) \Gamma\big(z+ {\textstyle\frac12}\big) 2^{2z-1}\pi^{-\frac12}. 
\]

Let now $\Omega\subset \R^n$ be any $C^{1,\alpha}$ domain.
Then, by \cite{RS17}, we know that $u/d^s\in C^\alpha(\overline\Omega\cap B_{1/2})$.
Thus, if we define $U_0\coloneqq(u/d^s)(0)$ we then have
\[u(x)=(u/d^s)(x)\,d^s(x) = \big(U_0+O(|x|^{\alpha})\big)\,d^s(x) = U_0d^s(x)+O(|x|^{\alpha})\,d^s(x).\]
Since $d^s(x)=O(|x|^s)$, this proves the expansion for $u$ near $0$.
Notice that such expansion also implies that
\[u(x)= U_0(x\cdot \nu)_+^s+O(|x|^{s+\alpha}),\]
where $\nu$ is the normal vector to $\partial\Omega$ at the origin.

Now, thanks to the previous expansion, we find that for $x=-t\nu\in \Omega^c$, with $t>0$,
\[(-\Delta)^s u(x) = U_0(-\Delta)^s (x\cdot \nu)_+^s+O(t^{-s+\alpha}) = \bar c_s U_0 t^{-s}+O(t^{-s+\alpha}),\]
where we used the first part of the Lemma (the case $n=1$).
Since this can be done not only at the origin but at every boundary point $z\in \partial\Omega$, we deduce that for every $x=z-t\nu_z\in \Omega^c$
\[(-\Delta)^s u(x) = \bar c_s U_z t^{-s}+O(t^{-s+\alpha}).\]
For each $x\in\Omega^c$ we can choose $z\in\partial\Omega$ such that $|x-z|=d(x)$, and then we deduce
\[(-\Delta)^s u(x) = \bar c_s U_z d^{-s}(x)+O(d^{-s+\alpha}(x)).\]
Since $U_z=U_0+O(|z|^\alpha) = U_0+O(|x|^\alpha)$, we finally get
\[(-\Delta)^s u(x) = \bar c_s U_0 d^{-s}(x)+O(|x|^\alpha)\,d^{-s}(x),\]
as wanted.
\end{proof}

We now have all the ingredients to give the:

\begin{proof}[Proof of Proposition~\ref{prop.crs}]
We divide the proof into two steps. In the first step we build the right competitors, and in the second step we use them to deduce the desired properties. 
\\[0.1cm]
{\bf Step 1.} According to the definition of critical point, \eqref{eq.criticalpoint}, we need to consider competitors of the form $u_\eps(x) = u(x+\eps\Psi(x))$ for smooth domain variations variations $\Psi$ supported in $B\subset \R^n$, in order to compute the expansion of the energy $\mathcal{J}_{\Lambda, B}(u_\eps)$ in $\eps$, for all $B\subset \R^n$. 
We notice two important properties: on the one hand, for $\eps$ small, $x+\eps\Psi$ is a diffeomorphism; on the other hand, we can always lower the energy by making $u_\eps$ $s$-harmonic in its positivity region. With these two properties, we have that it is enough to perform smooth $\eps$-deformations of the contact set $\{u = 0\}$ while keeping the positivity region $s$-harmonic. 

Let us now show the theorem by building a competitor $v_\eps$ in $B$. Let us consider a fixed function $f\in C^\infty_c(\partial \Omega)$, $f\ge 0$, and without loss of generality let us assume that ${\rm supp}(f) \subset B\cap \partial \Omega$ (the case for general $f$ without sign restriction is discussed at the end). Let us denote, for $x\in \R^n$, $z = \pi_\Omega(x)$ for any $z\in \partial \Omega$ such that $d(x) = {\rm dist}(x, z)$. Notice that if $d(x)$ is small enough, since the domain $\Omega$ is smooth, this is uniquely determined. (In the formulation above, we are considering the surface $({\rm Id} + \eps \Psi)(\partial\Omega)$ as a function over $\partial\Omega$.) 

Let us define the domain $\Omega_\eps$ for $\eps > 0$ as
\[
\Omega_\eps = \big\{x\in \Omega : d(x) \ge \eps f(\pi_\Omega(x))\big\}.
\]
Namely, we extend the complementary of $\Omega^c$ by $\eps f(z)$ for each $z\in \partial \Omega$ (see Figure~\ref{fig.0}). Let us denote by $v_\eps$ our competitor, which is the solution to  
\[
\left\{
\begin{array}{rcll}
\fls v_\eps & = & 0 & \quad\textrm{in } (\Omega_\eps\cap B)\\
v_\eps & = & u & \quad\textrm{in } B^c\\
v_\eps & = & 0& \quad \textrm{in }B \setminus \Omega_\eps.
\end{array}
\right.
\]

\begin{figure}
\centering
\includegraphics{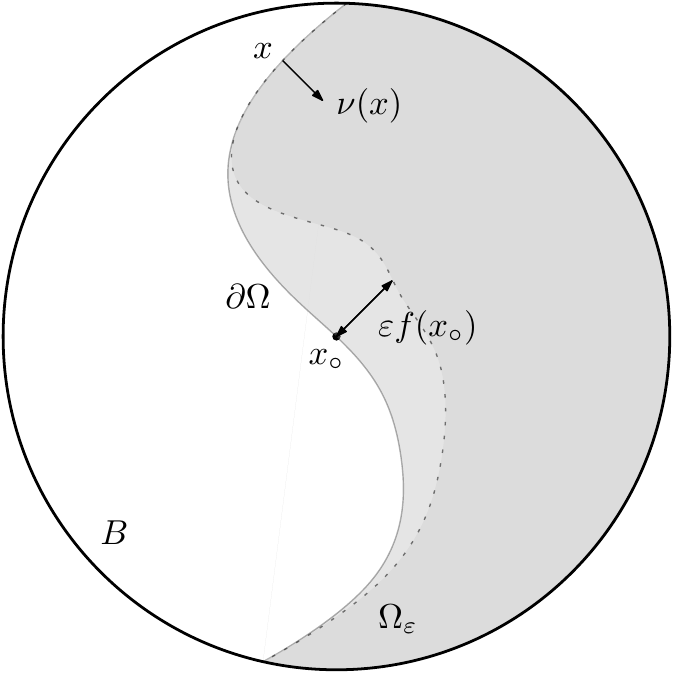}
\caption{The domains $B$, $\Omega$, and $\Omega_\eps$.}
\label{fig.0}
\end{figure}

Now, we have that $\mathcal{J}_{\Lambda, B}(v_\eps) \le \mathcal{J}_{\Lambda, B}(u_\eps) $ and they coincide for $\eps = 0$; therefore, the expansion in $\eps$ coincides at order 0 and 1 (and second derivatives are ordered). 
\\[0.1cm]
{\bf Step 2.}
Let us now compute the first order term in the expansion, $\mathcal{J}_{\Lambda, B}(v_\eps) - \mathcal{J}_{\Lambda, B}(u)$. We have, using the notation from \eqref{eq.notation}, and Lemma~\ref{lem.form1}
\begin{align*}
\mathcal{J}_{\Lambda, B}(v_\eps) - \mathcal{J}_{\Lambda, B}(u) & = \langle v_\eps, v_\eps\rangle_{H^s(B)} - \Lambda \langle u, u\rangle_{H^s(B)} -\Lambda^2 |(\Omega\setminus \Omega_\eps)\cap B|\\
& = \int_B (v_\eps - u) \fls (v_\eps+u) -\Lambda^2 |\Theta_\eps|,
\end{align*}
where we have denoted $\Theta_\eps \coloneqq (\Omega\setminus \Omega_\eps)\cap B$.
Notice now that, using $u\fls u = v_\eps \fls v_\eps = v_\eps\fls u = 0$ in $B$, and $u\fls v_\eps = 0$ in $\Omega^c\cup \Omega_\eps$, 
\begin{equation}
\label{eq.20}
\mathcal{J}_{\Lambda, B}(v_\eps) - \mathcal{J}_{\Lambda, B}(u) =-\int_{\Theta_\eps} u \fls v_\eps - \Lambda^2|\Theta_\eps|.
\end{equation}
Now, on the one hand
\begin{equation}
\label{eq.21}
|\Theta_\eps| = \eps \int_{\partial \Omega} f + o(\eps). 
\end{equation}

On the other hand, let us parametrize the points in $\Theta_\eps$ as $z+t\nu(z)$, where $z\in \partial \Omega$, $t > 0$, and $\nu(z)$ is the unit inward normal to $\partial \Omega$ at $z$. Notice also that the volume element here is $dV = (1+O(t)) \, d\sigma(z) \, dt$ where $d\sigma$ is the area element on $\partial \Omega$. Using this parametrization we can expand $u$ at $z\in \partial \Omega$ as 
\begin{equation}
\label{eq.11}
u(z+t\nu(z) ) = \frac{u}{d^s}(z)\, t^s + o(t^{s}),
\end{equation}
where 
\[
\frac{u}{d^s} (z) = \lim_{\tau\downarrow 0} \frac{u(z+\tau \nu(z))}{\tau^s}. 
\]
is the fractional normal derivative. Similarly, we can expand $v_\eps$ around $x_\circ = z+\eps f(z)\nu(z) \in \partial\Omega_\eps$ as 
\[
v_\eps(x) = \frac{v_\eps}{d_\eps^s}(x_\circ)\,d_\eps^s(x) + o(|x-x_\circ|^s)\quad\text{in}\quad  \Omega_\eps,
\]
where 
\[d_\eps(x)\coloneqq{\rm dist}(x, \partial \Omega_\eps).\]
By Lemma~\ref{lem.constants}, 
\[
\fls v_\eps = \bar c_s \,\frac{v_\eps}{d_\eps^s}(x_\circ)\,d_\eps^{-s} + o(d_\eps^{-s})\quad\text{in}\quad \Omega_\eps^c,
\]
where $\bar c_s$ is given by \eqref{eq.constants}. 
Notice now that, if $x = z+t\nu(z) \in \Theta_\eps$ ($0 < t < \eps f(z)$), 
\[
d_\eps^2(x)\big(1+  [\eps |\nabla f|+o(\eps)]^2\big)= (\eps f(z) - t)^2 
\]
so 
\begin{equation}
\label{eq.dist_asimpt}
d_\eps(x) = (\eps f(z) - t)(1 + o(\eps)).
\end{equation}
That is, 
\[
\fls v_\eps (z+t\nu(z) ) = \bar c_s \,\frac{v_\eps}{d_\eps^s}(x_\circ)\, \big((\eps f(z) - t)(1 + o(\eps))\big)^{-s} + o(t^{-s}). 
\]
Similarly, since $v_\eps \to u$ and $(v_\eps/d_\eps^s)(x_\circ) = (u/d^s)(z) + o(1)$ as $\eps\downarrow 0$, we have
\begin{equation}
\label{eq.12}
\fls v_\eps (z+t\nu(z) ) = \bar c_s \,\frac{u}{d^s}(z) (\eps f(z) - t)^{-s}(1+o(1)) + o(t^{-s}). 
\end{equation}
Putting \eqref{eq.11} and \eqref{eq.12}  together, 
\begin{align*}
\int_{\Theta_\eps} & u \fls v_\eps = \\
& = \int_{\partial \Omega}\int_0^{\eps f(z)}  t^s \bar c_s \left( \left(\frac{u}{d^s}(z)\right)^2 (\eps f(z) - t)^{-s}[1+o(1)] +  o(t^{-s})\right) \, dt \, d\sigma(z)\\
& =  \bar c_s \int_{\partial \Omega} \left(\frac{u}{d^s}(z)\right)^2  \int_0^{\eps f(z)}  t^s (\eps f(z) - t)^{-s}[1+o(1)] \, dt \, d\sigma(z) + o(\eps)\\
&  = \eps \bar c_s \int_{\partial \Omega}f(z) \left(\frac{u}{d^s}(z)\right)^2  \int_0^{1}  t^s (1 - t)^{-s} \, dt \, d\sigma(z) + o(\eps).
\end{align*}
Now notice that 
\[
\int_0^{1}  t^s (1 - t)^{-s} \, dt = \Gamma\left(1+s\right)\Gamma\left(1-s\right),
\]
so that
\begin{equation}
\label{eq.22}
\int_{\Theta_\eps} u \fls v_\eps = -\eps  \Gamma\left(1+s\right)^2 \int_{\partial \Omega}f(z) \left(\frac{u}{d^s}(z)\right)^2  \, d\sigma(z) + o(\eps).
\end{equation}
Combining \eqref{eq.20}, \eqref{eq.21} and \eqref{eq.22}, 
\[
\mathcal{J}_{\Lambda, B}(v_\eps) - \mathcal{J}_{\Lambda, B}(u) =\eps  \int_{\partial \Omega}f(z) \left[  \Gamma\left(1+s\right)^2\left(\frac{u}{d^s}(z)\right)^2-\Lambda^2\right]  \, d\sigma(z) + o(\eps).
\]

By \eqref{eq.criticalpoint}), we deduce
\[
 \int_{\partial \Omega}f(z) \left[ \Gamma\left(1+s\right)^2\left(\frac{u}{d^s}(z)\right)^2-\Lambda^2 \right]  \, d\sigma(z) = 0
\]
for all $0\leq f\in C^\infty_c(B\cap \partial \Omega)$, and hence
\[
\Gamma(1+s)\,\frac{u}{d^s}(z) = \Lambda
\]
for all $z\in \partial\Omega$, as we wanted to see. 
\end{proof}

\section{The stability condition}
\label{sec.3}

In this section we will prove the following result, which is our first characterisation of the stability condition.

\begin{prop}
\label{prop.main}
Let $u$ be a local minimizer to \eqref{eq.energy}, in the sense \eqref{eq.energy2}-\eqref{eq.loc_min}; or a stable critical point to \eqref{eq.energy} in $B\subset\R^n$, in the sense \eqref{eq.criticalpoint}-\eqref{eq.stablesolution}. Let $\Omega \coloneqq \{u > 0\}$, and let us suppose that $\partial \Omega \cap B$ is  at least $C^{2,\alpha}$. 
Then, $u$ satisfies
\begin{equation}
\label{eq.condFinB}
-\frac{1+s}{s}\cdot \frac{\Gamma(1+s)}{\Lambda} \int_{\partial \Omega} \partial_\nu\left(\frac{u}{d^s}\right) f^2 \, d\sigma \le -\int_{\partial \Omega} f \partial_\nu\left(\frac{F}{d^{s-1}}\right)d\sigma,
\end{equation}
for all $f\in C^\infty_c(\partial \Omega\cap B)$; where $\nu$ is the unit inward normal vector on $\partial\Omega$, and $F$ is the solution to 
\begin{equation}
\label{eq.FinB}
\left\{
\begin{array}{rcll}
\fls F & = & 0 & \quad\textrm{in } (\Omega \cap B)\\
F & = & 0 & \quad\textrm{in } (\Omega\cap B)^c\\
\frac{F}{d^{s-1}} & = &  f& \quad \textrm{on }\partial \Omega,
\end{array}
\right.
\end{equation}
which is a possible analogy of the Dirichlet problem for the fractional Laplacian.
\end{prop}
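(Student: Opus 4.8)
The plan is to run the second-order version of the first-variation computation carried out in the proof of Proposition~\ref{prop.crs}. Fix $f\in C^\infty_c(\partial\Omega\cap B)$ and, as there, let $\Omega_\eps=\{x\in\Omega:d(x)\ge\eps f(\pi_\Omega(x))\}$, let $v_\eps$ solve $\fls v_\eps=0$ in $\Omega_\eps\cap B$, $v_\eps=u$ in $B^c$ and $v_\eps=0$ in $B\setminus\Omega_\eps$, and put $\Theta_\eps=(\Omega\setminus\Omega_\eps)\cap B$ (when $f$ changes sign, $\Omega_\eps$ simply grows where $f<0$ and $\Theta_\eps$ is the corresponding signed region; the computation below is insensitive to this). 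Since $v_\eps$ never raises the energy of the diffeomorphic competitor $u_\eps$, Lemma~\ref{lem.form1} together with $u\,\fls u=v_\eps\,\fls v_\eps=v_\eps\,\fls u=0$ in $B$ and $u\,\fls v_\eps=0$ off $\Theta_\eps$ gives, exactly as in \eqref{eq.20},
\[
\mathcal{J}_{\Lambda,B}(v_\eps)-\mathcal{J}_{\Lambda,B}(u)=-\int_{\Theta_\eps}u\,\fls v_\eps-\Lambda^2|\Theta_\eps|=:\eps\,A_1(u,f)+\eps^2 A_2(u,f)+o(\eps^2),
\]
where $A_1(u,f)=0$ by \eqref{eq.criticalpoint} (this is the content of Proposition~\ref{prop.crs}) and $A_2(u,f)\ge0$ by \eqref{eq.stablesolution}. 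The whole proof then amounts to an explicit computation of $A_2(u,f)$.

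The measure term is elementary: parametrizing $\Theta_\eps$ by $z+t\nu(z)$, $z\in\partial\Omega$, $0<t<\eps f(z)$, with volume element $\big(1+t\,H(z)+O(t^2)\big)\,d\sigma(z)\,dt$, one finds
\[
|\Theta_\eps|=\eps\int_{\partial\Omega}f\,d\sigma+\frac{\eps^2}{2}\int_{\partial\Omega}H f^2\,d\sigma+o(\eps^2).
\]
For the nonlocal term I need a second-order refinement of Lemma~\ref{lem.constants}. By $C^{2,\alpha}$ boundary regularity for $\fls$ (\cite{RS17}), near $\partial\Omega_\eps$ one has
\[
v_\eps(x)=U_0^\eps(z_\eps)\,d_\eps^s(x)+U_1^\eps(z_\eps)\,d_\eps^{1+s}(x)+(\text{tangential})+o(d_\eps^{1+s}),\qquad z_\eps:=\pi_{\Omega_\eps}(x),
\]
and, feeding this into the reduction-to-dimension-one argument of Lemma~\ref{lem.constants} while keeping the next order (this is the delicate Lemma~\ref{lem.form2}), in $\Omega_\eps^c$
\[
\fls v_\eps(x)=\bar c_s\,U_0^\eps(z_\eps)\,\bar d_\eps^{-s}(x)+\big(\bar c_*\,U_0^\eps(z_\eps)\,H(z_\eps)+U_1^\eps(z_\eps)\big)\bar d_\eps^{1-s}(x)+o(\bar d_\eps^{1-s}),
\]
with $\bar d_\eps={\rm dist}(\cdot,\Omega_\eps)$ and the tangential term not contributing. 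Combining with $u(z+t\nu)=U_0^0(z)\,t^s+U_1^0(z)\,t^{1+s}+o(t^{1+s})$ — where by Proposition~\ref{prop.crs} $U_0^0\equiv\Lambda/\Gamma(1+s)$ is constant and $U_1^0=\partial_\nu(u/d^s)$ — and with the distance identity $d_\eps(z+t\nu)=(\eps f(z)-t)\big(1+O(\eps)\big)$ (its $O(\eps)$ correction carrying $H$ and $|\nabla f|^2$), the substitution $t=\eps f(z)\tau$ turns $\int_{\Theta_\eps}u\,\fls v_\eps$ into an $\eps$-expansion with coefficients given by one-dimensional Beta integrals, e.g. $\int_0^1\tau^s(1-\tau)^{-s}\,d\tau=\Gamma(1+s)\Gamma(1-s)$ and $\int_0^1\tau^s(1-\tau)^{1-s}\,d\tau=\tfrac12\Gamma(1+s)\Gamma(2-s)$.

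The one genuinely new point — and the main obstacle — is the first-order-in-$\eps$ behaviour of $U_0^\eps$, since the $\eps^2$-coefficient of $\int_{\Theta_\eps}u\,\fls v_\eps$ sees $U_0^\eps$ to order $\eps$ (whereas $U_1^\eps=U_1^0+o(1)$ suffices). To capture it I would show that $\eps^{-1}(v_\eps-u)$ converges, as $\eps\downarrow0$, to the solution $F_s$ of the large-solution Dirichlet problem \eqref{eq.nicolaeq} — that is, $\fls F_s=0$ in $\Omega\cap B$, $F_s=0$ outside, and $F_s/d^{s-1}=sf$ on $\partial\Omega$ — which is precisely where the large solutions of \cite{Aba15,Gru15} enter. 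The delicate part here is that $F_s$ blows up like $d^{s-1}$, so the convergence must be established in a weighted norm compatible with the singular integrals above, and one has to identify $\lim_{\eps\downarrow0}\eps^{-1}(U_0^\eps-U_0^0)$ with a boundary quantity of $F_s$, namely (up to explicit constants) $\partial_\nu(F_s/d^{s-1})$. Granting this, $U_0^\eps=U_0^0+\eps\,\tilde A(u,f)+o(\eps)$ with $\tilde A$ linear in $\partial_\nu(F_s/d^{s-1})$, one substitutes all the expansions into $A_2(u,f)$, and the curvature contributions combine (using $\Lambda=\Gamma(1+s)U_0^0$) so that $H$ drops out entirely. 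The surviving inequality $A_2(u,f)\ge0$ then reads, after using $F_s=sF$ by linearity and simplifying the Beta constants,
\[
\frac{\Gamma(2+s)}{s\,\Lambda}\int_{\partial\Omega}\partial_\nu\!\left(\frac{u}{d^s}\right)f^2\,d\sigma\ \ge\ \int_{\partial\Omega}f\,\partial_\nu\!\left(\frac{F}{d^{s-1}}\right)d\sigma,
\]
and since $\Gamma(2+s)/s=\tfrac{1+s}{s}\Gamma(1+s)$, multiplying through by $-1$ is exactly \eqref{eq.condFinB}. As the construction above requires no sign assumption on $f$, this proves the claim for all $f\in C^\infty_c(\partial\Omega\cap B)$.
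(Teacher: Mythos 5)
Your proposal reproduces the core of the paper's argument (their Steps 1--3): the competitor $v_\eps$, the reduction of the energy difference to $-\int_{\Theta_\eps}u\,\fls v_\eps-\Lambda^2|\Theta_\eps|$, the second-order area expansion, the refined boundary expansion of $\fls v_\eps$ from Lemma~\ref{lem.form2}, the identification of $F_s=\lim_{\eps\downarrow 0}\eps^{-1}(u-v_\eps)$ with the large solution of the singular Dirichlet problem of Abatangelo--Grubb, the first-order correction $U_0^\eps=U_0^0+\eps\,\tilde A+o(\eps)$ with $\tilde A$ carried by $\partial_\nu(F_s/d^{s-1})$, the cancellation of the curvature term, and the Beta-integral bookkeeping leading to $\Gamma(2+s)/s=\tfrac{1+s}{s}\Gamma(1+s)$.

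The genuine gap is the parenthetical dismissal of the sign-changing case (``the computation below is insensitive to this''). It is not, and the paper devotes a whole step to it. If $f$ takes both signs, $\Omega_\eps\not\subset\Omega$, so the identity \eqref{eq.20} must be replaced: Lemma~\ref{lem.form1} now produces two contributions, $-\int_{(\Omega\setminus\Omega_\eps)\cap B}u\,\fls v_\eps$ and $+\int_{(\Omega_\eps\setminus\Omega)\cap B}v_\eps\,\fls u$, and the convergence argument for $F_\eps=\eps^{-1}(u-v_\eps)$ also breaks, since $w_\eps=u-v_\eps$ is no longer $s$-harmonic in all of $\Omega_\eps\cap B$ (one only has $\fls w_\eps=0$ in $\Omega\cap\Omega_\eps\cap B$), so the Poisson representation $w_\eps(x_\circ)=\int_{\Theta_\eps}P_\eps(x_\circ,y)u(y)\,dy$ used to control $F_\eps$ uniformly is not directly available. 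The paper handles this by first proving \eqref{eq.condFinB} for $f_+$ and $f_-$ separately (each nonnegative), then writing $F_s=F_+-F_-$ by linearity and observing that, since $f_-=0$ on $\{f_+>0\}$, the maximum principle for the large Dirichlet problem forces $\partial_\nu(F_- d^{1-s})\ge 0$ there (and symmetrically), which yields the one-sided estimate $f\,\partial_\nu(F_s d^{1-s})\le f_+\partial_\nu(F_+ d^{1-s})+f_-\partial_\nu(F_- d^{1-s})$ needed to assemble the inequality for signed $f$. You should either restrict to $f\ge 0$ and add this separate polarization/maximum-principle step, or carefully redo the expansion including the $(\Omega_\eps\setminus\Omega)$ contribution; as written, the claim that the computation is ``insensitive'' is unjustified.

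A minor notational inaccuracy: in your stated expansion of $\fls v_\eps$ the coefficient of $\bar d_\eps^{1-s}$ should carry the explicit constant $\bar c_{1+s}$ multiplying $U_1^\eps$ (as in Lemma~\ref{lem.form2}); dropping it does not affect the final constant only because it cancels in the algebra, but one should make that visible.
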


Problem~\eqref{eq.FinB} is a singular boundary value problem for the fractional Laplacian, and it is well posed for any boundary value $f\in C^0(\partial\Omega)$. 
It was first studied in \cite{Aba15, Gru15}.

To prove the result we will need the following lemma, which is a higher order version of Lemma~\ref{lem.constants} above. 

\begin{lem}
\label{lem.form2}
Let $\Omega\subset \R^n$ be any $C^{2,\alpha}$ domain, with $0\in \partial \Omega$, and let $u$ solve 
\[
\left\{
\begin{array}{rcll}
\fls u & = & 0 & \quad\textrm{in } B_1\cap \Omega\\
u & = & 0 & \quad\textrm{in } B_1\setminus \Omega.
\end{array}
\right.
\]
Let us define, for $x\in \Omega$,
\[
\eta(x) = \frac{u(x)}{d^s(x)},
\]
where  $d(x) = {\rm dist}(x, \partial \Omega)$. 
Then, $\eta\in C^{1,\alpha}(\overline\Omega \cap B_{1/2})$ and we can express the expansion of $u$ around $0$ as 
\[
u(x) = \eta(0) d^s(x) +(\nabla \eta(0) \cdot x )d^{s}(x) + o(|x|)\,d^{s}(x)\quad\textrm{ in } \Omega.
\]
Moreover, we can expand $\fls u$ at 0 as
\begin{align*}
\fls u(x) & = \bar c_s \eta(0) d^{-s}(x) + \bar c_* \eta(0) H d^{1-s}(x) + \bar c_{1+s}\partial_\nu \eta(0) d^{1-s}(x)\, + \\
& \qquad\qquad\qquad\qquad\quad +  \bar c_s (\nabla_{\tau} \eta(0)\cdot x) d^{-s}(x)  + o(|x|)\,d^{-s}(x) \quad\textrm{ in } B\setminus \Omega
\end{align*}
around 0, where $H$ denotes the mean curvature of $\partial\Omega$ at $0$ (with respect to $\{u = 0\}$), $\nu$ denotes the unit inward normal derivative to $\partial\Omega$ at 0, and $\nabla_\tau$ denotes the tangential component of the gradient at 0. The constants are given by 
\begin{equation}
\label{eq.constants}
\bar c_s = -\frac{\Gamma(1+s)}{\Gamma(1-s)}, \qquad \bar c_* = \frac{s\Gamma(1+s)}{\Gamma(2-s)},\qquad  \bar c_{1+s} = \frac{\Gamma(2+s)}{\Gamma(2-s)}.
\end{equation}
\end{lem}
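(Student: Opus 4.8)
The strategy is to reduce everything to the one-dimensional fractional Laplacian by freezing coefficients along the inward normal at the origin, exactly as in the proof of Lemma~\ref{lem.constants}, but now carrying the Taylor expansion of $u/d^s$ and of $d(x)$ one order further. First I would invoke the boundary regularity theory of \cite{RS14, RS17} for $C^{2,\alpha}$ domains: since $\Omega$ is $C^{2,\alpha}$ and $\fls u = 0$ in $B_1 \cap \Omega$ with $u \equiv 0$ outside, one has $\eta = u/d^s \in C^{1,\alpha}(\overline{\Omega} \cap B_{1/2})$. This immediately gives the claimed expansion of $u$: write $\eta(x) = \eta(0) + \nabla\eta(0)\cdot x + o(|x|)$ and multiply by $d^s(x)$. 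It will also be convenient to split $\nabla\eta(0) = \partial_\nu\eta(0)\,\nu + \nabla_\tau\eta(0)$ into normal and tangential parts, since these two pieces will produce the $d^{1-s}$ term and the $(\nabla_\tau\eta(0)\cdot x)d^{-s}$ term respectively.

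Next I would set up the blow-up/normal-coordinate computation. Fix $x = -t\nu \in \Omega^c$ with $t > 0$ small (the general $x \in B\setminus\Omega$ is handled afterwards by reducing to the nearest boundary point $z \in \partial\Omega$ and using that the constants and $H$ vary by $O(|z|^\alpha)$). On the line $\{x + r\theta\}$ through $x$, the restriction of $u$ is, to the relevant order, a one-dimensional function of the signed distance to $\partial\Omega$; because $\partial\Omega$ is $C^{2,\alpha}$, the distance function expands as $d(x+r\theta) = (\text{linear in }r) + \tfrac12 (\text{second fundamental form term}) r^2 + \dots$, and it is precisely this quadratic correction, averaged over the sphere $\theta \in \mathbb{S}^{n-1}$, that brings in the mean curvature $H$. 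The plan is therefore: (i) expand $u(x+r\theta) \approx \eta(0)\,\tilde d^{\,s} + (\nabla\eta(0)\cdot(x+r\theta))\,\tilde d^{\,s}$ where $\tilde d = d(x+r\theta)$; (ii) insert the $C^{2,\alpha}$ expansion of $\tilde d$ in $r$; (iii) plug into the singular-integral formula
\[
\fls u(x) = -\frac{c_{n,s}}{2}\int_{\mathbb{S}^{n-1}}\int_{-\infty}^\infty \frac{u(x+r\theta)+u(x-r\theta)-2u(x)}{r^{1+2s}}\,dr\,d\theta;
\]
(iv) collect terms by homogeneity in $t$: the $\eta(0)$ piece with no curvature correction gives $\bar c_s \eta(0) t^{-s}$ (the $n=1$ case of Lemma~\ref{lem.constants}); the curvature correction to the $\eta(0)$ piece gives a term $\propto \eta(0) H t^{1-s}$; the normal part of the linear term $\partial_\nu\eta(0)(x\cdot\nu)$, being essentially $-t\,\partial_\nu\eta(0)$ times a $(-s)$-homogeneous kernel, produces $\propto \partial_\nu\eta(0)\, t^{1-s}$; and the tangential part $\nabla_\tau\eta(0)\cdot x$ is itself a $(-\Delta)^s$ of the $s$-homogeneous profile tilted in a tangential direction, giving $\bar c_s(\nabla_\tau\eta(0)\cdot x)t^{-s}$. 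All the remainders are $o(|x|)d^{-s}(x)$ by the $C^{1,\alpha}$ and $C^{2,\alpha}$ regularity.

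The main obstacle — and the one step I would be most careful about — is the precise identification of the two constants $\bar c_* = \tfrac{s\Gamma(1+s)}{\Gamma(2-s)}$ and $\bar c_{1+s} = \tfrac{\Gamma(2+s)}{\Gamma(2-s)}$. This requires the explicit one-dimensional computations $\fls_\R(t_+^{s})$, $\fls_\R(t_+^{1+s})$ and, for the curvature term, the Beta-integral that appeared already in Proposition~\ref{prop.crs} (namely $\int_0^1 t^s(1-t)^{-s}dt = \Gamma(1+s)\Gamma(1-s)$) together with its $1+s$-analogue, plus the spherical average $\int_{\mathbb{S}^{n-1}}|\theta_n|^{2s}d\theta$ and the geometric fact that the $r^2$-coefficient in the expansion of $d(x+r\theta)$ along $\theta$, summed over the sphere, reconstitutes the full mean curvature $H$ (not just one principal curvature). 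Bookkeeping the Gamma-function identities via the duplication formula to land exactly on \eqref{eq.constants} is the delicate part; everything else is the same reduction-to-1D machinery as in Lemma~\ref{lem.constants}, now iterated one order higher. Finally, transplanting from $x = -t\nu$ at the origin to a general $x \in B\setminus\Omega$: pick $z \in \partial\Omega$ with $|x - z| = d(x)$, apply the above at $z$, and note $H(z) = H(0) + O(|z|^\alpha)$, $\partial_\nu\eta(z) = \partial_\nu\eta(0) + O(|z|^\alpha)$ and $\eta(z) = \eta(0) + O(|z|^\alpha)$, so the errors are absorbed into $o(|x|)d^{-s}(x)$, completing the expansion.
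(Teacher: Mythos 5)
Your proposal takes a genuinely different route from the paper's proof, and in doing so it runs into a gap that the paper's approach is specifically designed to sidestep.

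The paper does not work with the singular-integral formula on $\R^n$ at all at this stage. Instead it passes to the Caffarelli--Silvestre extension, invokes the local polynomial expansion of the globally $a$-harmonic function $u(x,y)$ near $\partial\Omega\times\{0\}$ from \cite{JN17,DS15b} (namely $u(x,y)=2^{-s}\,\U_s\big(P(x,r)+o(|(x,r)|^{k+1/2})\big)$ for $\U_s=(r+\delta)^s$), and identifies the coefficients: $a_0,a_1,A_3$ are read off from the trace on $\{y=0\}$ (i.e.\ from $\eta\in C^{1,\alpha}$), while the coefficient of $r$ --- which is \emph{invisible} from $\{y=0\}$ --- is determined by imposing $L_a u=0$; this is precisely how the mean curvature $H$ enters. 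The value of $\fls u$ is then read off as $-d_s\lim_{y\downarrow 0}y^a\partial_y u$, entirely from the local expansion.

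The step where your plan breaks down is item (iii)--(iv): computing $\fls$ of the local Taylor blocks directly via the singular-integral formula. The normal piece of the linear correction to $\eta$ leads to the model function $(y\cdot\nu)_+^{1+s}$, and the curvature correction to $d(y)$ leads to $(y\cdot\nu)_+^{s-1}\,(y')^T A\, y'$. Neither of these belongs to $L^1\big(\R^n,(1+|y|)^{-n-2s}dy\big)$; the integrals $\int (y\cdot\nu)_+^{1+s}|y+t\nu|^{-n-2s}\,dy$ diverge at infinity for every $s\in(0,1)$. Consequently, "collecting terms by homogeneity in $t$'' by freezing coefficients and reducing to $\fls_\R(t_+^{1+s})$ is not licensed: what one actually integrates is the true function $u$, whose expansion agrees with these model blocks only locally, and the long-range ($|y|\gtrsim 1$) part of the integral contributes a term that is \emph{a priori} of order $O(1)$ in $t$, not $o(t^{1-s})$. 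The reason this $O(1)$ term is in fact absent is hidden: it follows from the hypothesis that $\fls u\equiv 0$ in $B_1\cap\Omega$, which forces the far-field potential $z\mapsto\int_{B_1^c}\frac{u(y)-\eta(0)(y\cdot\nu)_+^s}{|y-z|^{n+2s}}\,dy$ to vanish on $B_1\cap\Omega$ and hence, by smoothness, to vanish to infinite order at $0$. Your plan never invokes the PDE in this way and therefore cannot control the non-local tail. The extension-based argument avoids this entirely because the local polynomial expansion of the $a$-harmonic $u(x,y)$ already incorporates the global data through its coefficients, and the error $o(|(x,r)|^{k+1/2})$ is controlled in a norm strong enough to survive the operation $y^a\partial_y(\cdot)|_{y\downarrow 0}$.

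A secondary point: to get the correct constant $\bar c_*$, the paper needs the auxiliary $a$-harmonic building block $\mathcal V_s=\U_s+\tfrac{H}{2}\tfrac{s}{1-s}\U_s(\delta-r)$ and the observation that the $r$-coefficient can only be pinned down by cancelling $L_a\mathcal V_s$; there is no analogue of this mechanism in a pure boundary-Taylor computation of $d(x+r\theta)$, so even setting aside the convergence issue, the bookkeeping you describe for $\bar c_*$ would have to be re-derived from scratch and is not "the same reduction-to-1D machinery as in Lemma~\ref{lem.constants}, iterated one order higher.''
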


\begin{proof}
Notice that, by \cite[Theorem 1.4]{AR20}, $\eta\in C^{1,\alpha}(\overline{\Omega}\cap B_{1/2})$, so 
\[
u(x) = \eta(0) d^s(x) + (\nabla \eta(0) \cdot x) d^s(x) + o(|x|)\,d^s.
\]
Let us now divide the proof into three steps. In the first step we perform some computations that will be useful in the following ones.
\\[0.1cm]
{\bf Step 1.} Let us denote by $\delta$ the signed distance to $\Omega$, so that $\delta > 0$ in $\Omega$ and $\delta \le  0$ in $\Omega^c$. We also consider the extension problem: by taking coordinates in $\R^{n+1}$, $(x, y)$ with $x\in \R^n$ and $y\in \R_+$ (see \cite{CS07} and subsection~\ref{ss.local}).

Let us also define by $r = r(X)$ the distance to $\partial\Omega$ in $\R^n\times \R_+$, namely
\[
r = (\delta^2+y^2)^{\frac12}.
\] 
Notice that we also have, on $\partial \Omega$, $\Delta \delta = \Delta_x \delta = H$, where $H = H(x)$ is the mean curvature of the level set of $\{\delta = t\}$ with respect to $\{\delta \le t\}$ (in particular, when $\delta$ goes to zero, $H$ is the mean curvature of $\partial \Omega$ with respect to $\{u = 0\}$). 

Let us consider the operator 
\[
L_a v \coloneqq {\rm div}(y^a\nabla v),\qquad \text{where }\quad a = 1-2s. 
\]
We define also 
\[
\mathcal{U}_s \coloneqq (r+\delta)^s.
\]
For convenience to the reader, we collect some identities that are useful in the computations below.
\[
\begin{array}{ll}
L_a \delta =  H y^a & \quad L_a(F^\alpha) = \alpha F^{\alpha-1} L_aF + \alpha(\alpha-1)F^{\alpha-2}|\nabla F|^2 y^a\\
\nabla r = \frac{1}{r}(\delta\nabla \delta, y) & \quad L_a \mathcal{U}_s  = s H\frac{\mathcal{U}_s}{r} y^a\\
|\nabla \delta|^2 = |\nabla r|^2 = 1 & \quad \nabla \mathcal{U}_s = s (r+\delta)^{s-1}(\nabla \delta + \nabla r)  \\
r L_a r = (H\delta +1 +a)y^a&  \quad \nabla \delta \cdot \nabla r = \frac{\delta}{r}\\
|\nabla (r+\delta)|^2 = 2\left(1+\frac{\delta}{r}\right)&  \quad \nabla \mathcal{U}_s \cdot \nabla \delta = \nabla \mathcal{U}_s \cdot \nabla r = s \frac{\mathcal{U}_s}{r}.\\
 & 
\end{array}
\]
From here, we can also compute 
\begin{align*}
L_a(\mathcal{U}_s \delta ) & = sH \mathcal{U}_s \frac{\delta}{r} y^a + H\mathcal{U}_s y^a +2s\frac{\mathcal{U}_s}{r} y^a\\
L_a(\mathcal{U}_s r) & = H\mathcal{U}_s \frac{\delta}{r} y^a + sH \mathcal{U}_s y^a + 2\frac{\mathcal{U}_s}{r} y^a. 
\end{align*}
In particular, 
\[
L_a (\mathcal{U}_s(sr - \delta)) = -(1-s^2)H y^a \mathcal{U}_s,
\]
and, if we denote 
\[
\mathcal{V}_s \coloneqq \mathcal{U}_s -\frac{H}{2} \mathcal{U}_s \delta + \frac{1}{1-s}\frac{H}{2} \mathcal{U}_s (\delta-sr) = \mathcal{U}_s + \frac{H}{2} \mathcal{U}_s \frac{s}{1-s} (\delta-r),
\]
we have
\[
L_a \mathcal{V}_s = s \frac{H^2}{2} \frac{\mathcal{U}_s }{r} (r-\delta) y^a.
\]
Moreover, by considering the region where $\delta = -t$ for some $t > 0$ (i.e., in $\Omega^c$)
\begin{equation}
\label{eq.comp1}
y^a \partial_y \mathcal{U}_s = \frac{s}{r (r+t)^{s-1}} \to s2^{1-s}t^{-s}\quad\text{as}\quad y \downarrow 0,
\end{equation}
so that 
\begin{equation}
\label{eq.comp2}
y^a\partial_y \mathcal{V}_s \to s2^{1-s} t^{-s} - H\frac{s^2}{1-s} 2^{1-s} t^{1-s}\quad\text{as}\quad y \downarrow 0. 
\end{equation}
We also recall that, if $U(x, y)$ is $a$-harmonic ($L_a U = 0$ in $\{y > 0\}$), then 
\[
\fls U(\cdot, 0) = -d_s \lim_{y \downarrow 0} y^a \partial_y U(x, y),\qquad \text{with}\quad  d_s = 2^{2s-1}\frac{\Gamma(s)}{\Gamma(1-s)}.
\]
(See \cite{ST10}.)

Moreover, if $\nabla \delta(0) = \boldsymbol{e}_n$ and $x = (x', x_n)\in \R^{n-1}\times \R$, given $v\in \R^{n-1}$,
\[
L_a \left[\mathcal{U}_s v \cdot x' \right]  =  s H \frac{U_a}{r} (v \cdot x') y^a  +2\nabla_{x'} \mathcal{U}_s \cdot v y^a,
 \]
 where $\nabla_{x'}$ denotes the gradient in the first $n-1$ coordinates.
 We can compute $\nabla_{x'} \mathcal{U}_s$ as
  \[
 \nabla_{x'} \mathcal{U}_s = s\, \mathcal{U}_s \frac{1}{r}  \nabla_{x'} \delta .
 \]
 Notice that, since $\nabla_{x'} \delta(0) = 0$ and $\Omega$ is smooth, so
\[
 L_a \left[\mathcal{U}_s v\cdot x'\right] =  \frac{\mathcal{U}_s}{r} y^a O(|x|),
\]
 where we have used that $\nabla_{x'} \delta(x) = O(|x|)$.
\\[0.1cm]
{\bf Step 2.} By \cite[Proposition 4.1]{JN17}-\cite[Theorem 3.1]{DS15b}, we can express  the $a$-harmonic extension of $u$ towards $\{y > 0\}$ as
\[
u(x, y) = 2^{-s}\U_s \left(P(x, r) + o(|(x, r)|^{k+\frac12})\right) 
\]
for a polynomial $P$ in $x$ and $r$ of degree $k$. Expanding it in terms of $\delta$, and denoting by $\nu$ the unit inward normal vector to $\partial\Omega$ at 0,  we have that 
\[
u(x, y) = 2^{-s}\U_s \left(a_0 +a_1 \delta + a_2 r + A_3\cdot (x-(x\cdot\nu)\nu) + o(|(x, r)|^{\frac32})\right),
\] 
where we are using that, at first order, $(x\cdot\nu)\nu$ is like $\delta$ and $(x-(x\cdot\nu)\nu)$ corresponds to the tangential space to $\partial\Omega$ at 0. 

Without loss of generality, let us assume $\nu = \boldsymbol{e}_n$ and let us denote points in $\R^n$ as $x = (x',x_n)\in \R^{n-1}\times \R$ (notice that, then, $\delta(x) = x_n + o(|x|)$). Then
\[
u(x, y) = 2^{-s}\U_s \left(a_0 +a_1 \delta + a_2 r + A_3\cdot x' + o(|(x, r)|^{\frac32})\right),
\] 
For future convenience, we re-express it as
\[
u(x, y) = 2^{-s}\U_s \left(a_0 +\tilde a_1 (\delta-sr) + \tilde a_2 (\delta-r) + A_3\cdot x' + \dots\right),
\] 
so that, from the definition of $\eta(x)$,
\begin{equation}
\label{eq.from1}
\eta(x) = \frac{u(x ,0)}{\delta^s(x)} = a_0+\tilde a_1 (1-s)  \delta +  A_3\cdot x'+o(|x|). 
\end{equation}

By expanding $\eta(x)$ at $0\in \partial\Omega$ we also have 
\[
\eta(x) = \eta(0) + \nabla \eta(0) \cdot x + o(|x|) = \eta(0) + \nabla \eta(0) \cdot x'  + \partial_n \eta(0) x_n + o(|x|).
\]
Comparing it with \eqref{eq.from1}, and using that $\delta = x_n + o(|x|)$, we reach  
\[
a_0 = \eta(0),\quad \tilde a_1(1-s) = \partial_n \eta(0),\quad A_3 = \nabla_{x'} \eta(0).
\]

Thus
\[
\frac{u(x, y)}{2^{-s}\mathcal{U}_s} = \eta(0) + \frac{\partial_n\eta(0)}{1-s} (\delta-sr) +\tilde a_2 (\delta-r) +\nabla_{x'} \eta(0)\cdot x' + o(|(x, r)|). 
\]  
We now use the fact that $u(x, y)$ is the $a$-harmonic expansion of $u$, and thus $L_a u(x, y) = 0$ for $y > 0$. That is, 
\begin{align*}
y^{-a}L_a \bigg(\mathcal{U}_s\bigg[\eta(0) + \frac{\partial_n\eta(0)}{1-s} (\delta-sr) & +\nabla_{x'} \eta(0)\cdot x'\bigg]\bigg)\,+ \\
& + \tilde a_2 y^{-a}L_a \bigg(\mathcal{U}_s(\delta-r) \bigg) = o(|(x, r)|^{s-1}).
\end{align*}
This implies some cancellations that give the only value of $\tilde a_2$ possible (notice that we cannot proceed as before, since the term multiplying $\tilde a_2$ cannot be seen from $\{y = 0\}$).

From the calculations in the first step, 
\[
L_a(\mathcal{U}_s(\delta-sr) ) = C\mathcal{U}_s y^a  = Cy^aO(|(x, r)|^s) = y^a o(|(x, r)|^{s-1})
\]
and 
\[
L_a (\mathcal{U}_s \nabla_{x'}\eta(0)\cdot x') = \frac{\mathcal{U}_s}{r} y^a O(|x|) = y^a O(|(x, r)|^s) = y^a o(|(x, r)|^{s-1})
\]
are already ``almost'' $a$-harmonic, so we have to impose 
\begin{equation}
\label{eq.impose}
y^{-a}L_a \bigg(\mathcal{U}_s\bigg[\eta(0)  +\nabla_{x'} 	\eta(0)\cdot x'+\tilde a_2 (\delta-r)\bigg]\bigg)  = o(|(x, r)|^{s-1}).
\end{equation}

 Therefore, from \eqref{eq.impose} 
 \[
y^{-a}L_a \bigg(\mathcal{U}_s\bigg[\eta(0)  +\tilde a_2 (\delta-r)\bigg]\bigg)  = o(|(x, r)|^{s-1}).
 \]
 Notice that, in Step 1, we had $y^{-a}L_a\mathcal{V}_s = o(|(x, r)|^{s-1})$, so we must have 
 \[
\frac{\tilde a_2}{\eta(0)} = \frac{H}{2}\frac{s}{1-s}.
 \]
 In all, the expansion of $u$ at 0 must be
\[
\frac{u(x, y)}{2^{-s}\mathcal{U}_s} = \eta(0) + \frac{\partial_n\eta(0)}{1-s} (\delta-sr) + \eta(0)\frac{H}{2}\frac{s}{1-s} (\delta-r) + \nabla \eta(0) \cdot x' + o(|(x, r)|^{\frac32}).
\] 
Alternatively,
\[
2^s u(x, y) =  \eta(0) \mathcal{V}_s   + \frac{\partial_n\eta(0)}{1-s} \mathcal{U}_s (\delta-sr) + \mathcal{U}_s \nabla \eta(0) \cdot x' + \U_s o(|(x, r)|^{\frac32}),
\]
and this expression will allow us to compute the fractional Laplacian. 
\\[0.1cm]
{\bf Step 3.} Now, the fractional Laplacian $\fls u(x)$ can be computed as 
\[
\fls u (x) = - d_s \lim_{y \downarrow 0} y^a\partial_y u(x, y). 
\]
As in Step 1, we use that $\delta = -t$ for some $t > 0$ (we are the area where $\{u = 0\}$, since otherwise we already know by assumption that $u$ has vanishing fractional Laplacian), and we also recall the computations \eqref{eq.comp1} and \eqref{eq.comp2}. We can then compute
\[
y^a\partial_y \left\{ \mathcal{U}_s (-t-sr) \right\} = -s\frac{t+sr}{t(r+t)^{s-1}}-s \mathcal{U}_s \frac{y^{1+a}}{r}\to - s(1+s)2^{1-s}t^{1-s}\text{ as $y\downarrow 0$}. 
\]
Putting it all together, and from the expansion of $u$ we have 
\begin{align*}
-2^{-a}d_s^{-1}\fls u(x) & = \eta(0) \left[s t^{-s} - H\frac{s^2}{1-s}  t^{1-s}\right]-\frac{\partial_n \eta(0)}{1-s}s(1+s)t^{1-s} \,+\\
 & \hspace{50mm} + s t^{-s}\nabla \eta(0) \cdot x' + o(|x|t^{-s}).
\end{align*}
Substituting the corresponding values we obtain the desired result. 
\end{proof}

Let us now give the proof of the stability condition:

\begin{proof}[Proof of Proposition~\ref{prop.main}]
We divide the proof into four steps. Without loss of generality we will assume that $\Lambda = \Gamma(1+s)$ so that, by Proposition~\ref{prop.crs}, $u/{d^s} \equiv 1$ on $\partial \Omega$. 
\\[0.1cm]
{\bf Step 1.}
Let us assume $f \ge 0$, and let us build a competitor, as in the proof of Proposition~\ref{prop.crs}. We will take general $f$ in the last step. 

That is, let us consider a fixed function $f\in C^\infty_c(\partial \Omega)$, $f\ge 0$, and we assume that ${\rm supp}(f) \subset B\cap \partial \Omega$. We recall the definition of $\Omega_\eps$ and $v_\eps$ as in Proposition~\ref{prop.crs}: for $x\in \R^n$, we denote $z = \pi_\Omega(x)$ for any $z\in \partial \Omega$ such that $d(x) = {\rm dist}(x, z)$. We define the domain $\Omega_\eps$ as
\[
\Omega_\eps = \big\{x\in \Omega : d(x) \ge \eps f(\pi_\Omega(x))\big\}.
\]
Denote by $v_\eps$ our competitor, which is the solution of  
\[
\left\{
\begin{array}{rcll}
\fls v_\eps & = & 0 & \quad\textrm{in } (\Omega_\eps\cap B)\\
v_\eps & = & u & \quad\textrm{in } B^c\\
v_\eps & = & 0& \quad \textrm{in }B \setminus \Omega_\eps.
\end{array}
\right.
\]
(Recall Figure~\ref{fig.0}.)

We now define the function 
\[
F_\eps = \frac{u-v_\eps}{\eps},\qquad F_s = \lim_{\eps\downarrow 0} F_\eps. 
\]

Notice that $F_s$ satisfies $F_s = 0$ in $(\Omega \cap B)^c$, and $\fls F_s = 0$ in $\Omega\cap B$. Let us now see that $F_s$ is well defined in the interior of $\Omega$. In particular, we will show that $F_\eps$ is smooth at any $x_\circ\in \Omega$, with estimates independent of $\eps$. 

Let $x_\circ\in \Omega$, $t_\circ \coloneqq {\rm dist}(x_\circ, \partial\Omega) > 0$. Let us denote $w_\eps = u-v_\eps$. Then, $\fls w_\eps = 0$ in $\Omega_\eps \cap B$ and ${\rm dist}(x_\circ, \Omega_\eps) > \frac12 t_\circ$ for $\eps$ small enough. Let us denote by $P_\eps(x, y): (\Omega^\eps\cap B) \times (\Omega_\eps\cap B)^c\to \R$ the Poisson kernel associated to the (smooth) domain $\Omega_\eps$. That is, if $w$ is such that
\[
\left\{
\begin{array}{rcll}
\fls w & = & 0 & \quad\textrm{in } (\Omega_\eps\cap B)\\
w & = & h & \quad\textrm{in } (\Omega_\eps\cap B)^c,
\end{array}
\right.
\]
then 
\[
w(x) = \int_{(\Omega_\eps\cap B)^c} P_\eps(x, y) h(y)\, dy. 
\]
For our function $w_\eps$, we have that $w_\eps  =0$ in $\Omega^c$, and thus
\begin{equation}
\label{eq.hPoisson}
w_\eps(x_\circ) = \int_{\Theta_\eps} P_\eps(x_\circ, y) u(y)\, dy. 
\end{equation}
where $\Theta_\eps = (\Omega\setminus \Omega_\eps)\cap B$. From the growth condition of $u$ at the boundary we know that $u \sim d^s$ on $\Theta_\eps$. On the other hand, we have estimates for the Poisson kernel (see \cite[Theorem 1.5]{CS98})
\begin{equation}
\label{eq.Poisson}
P_\eps(x, y) \le C \frac{d_\eps^s(x)}{d_\eps^s(y) (1+d_\eps(y))^s} \frac{1}{|x-y|^n} 
\end{equation}
where $d_\eps(z) = {\rm dist}(z, \partial\Omega_\eps)$, and the constant $C$ depends only on $s$ and the regularity of $\Omega_\eps$. From the smoothness of $\Omega$ and $f$, we can take a uniform $C$ in $\eps$.

Finally, combining the fact that $u\sim d^s$, \eqref{eq.Poisson}, \eqref{eq.hPoisson}, and $f$ is bounded, we reach that 
\[
w_\eps(x_\circ) \le C \int_{\partial\Omega} \int_0^{\eps f(z)} \left(\frac{\eps f(z)- t}{t}\right)^s\, dt d\sigma(z)= C \eps,
\]
for some $C$ depending only on $n$, $s$, $t_\circ$, and the exterior and interior ball condition of $\Omega$ and $\|f\|_{C^2(\partial\Omega)}$. In particular, $F_\eps$ is bounded in $B_{t_\circ/4}(x_\circ)$ independently of $\eps$. We can repeat the same argument for the derivatives of $w_\eps$ to deduce that $F_\eps$ is smooth (independently of $\eps$) in $B_{t_\circ/4}(x_\circ)$, so that by Arzel\`a-Ascoli we can take subsequences and conclude that $F_\eps \to F_s$ uniformly and $F_s$ is smooth in $B_{t_\circ/4}(x_\circ)$. 

In all, we have proved that $F_s$ is well defined (and smooth) in the interior of $\Omega$. Nonetheless, the function $F_s$ might explode when approaching the boundary $\partial\Omega$. Notice, however, that from the argumentation above, if $z_\circ\in \partial (B\cap \Omega)\setminus \partial\Omega$, then $F_s(z) \to 0$ as $z\to z_\circ$ and $F_s$ is continuous across $\partial B$. 
\\[0.1cm]
{\bf Step 2.}
Let us now compute an expansion around boundary points on $\partial\Omega$ for $F_s$. Recall that $F_s = 0$ in $(\Omega \cap B)^c$, and $\fls F_s = 0$ in $\Omega\cap B$. On the other hand, if we denote by $\nu : \partial \Omega \to \mathbb{S}^{n-1}$ the unit inward normal vector to $\partial \Omega$, then by Lemma~\ref{lem.form2} we can expand $u$ at $z$ as 
\[
u(z+t\nu(z) ) = t^s + U_1(z) t^{1+s} + o(t^{1+s}),
\]
for $t \ge 0$, and where $U_1(z) = \partial_\nu \left(\frac{u}{d^s}\right)(z)$,  since $u$ is a solution to the one-phase problem, \eqref{eq.OPT}, and by Proposition~\ref{prop.crs} the normal derivative is constant (recall $\Lambda = \frac{1}{\Gamma(1+s)}$); in particular, the tangential term in the expansion at order $1+s$ vanishes.

Let us also expand $v_\eps$ in the $\nu(z)$ direction at the point $P_\eps(z) \coloneqq z+\eps f(z) \nu(z)$. To do so, let us denote by $d_\eps(x) = {\rm dist}(x, \partial\Omega_\eps)$, and $\eta_\eps(x) = {v_\eps(x)}/{d_\eps^s(x)}$ for $x\in \overline{\Omega_\eps}$. Then, we have that, expanding $v_\eps$ at $P_\eps(z)$,
\[
v_\eps(x) = \eta_\eps(P_\eps(z)) d^s_\eps(x) + (\nabla \eta(P_\eps(z))\cdot x) d_\eps^s(x) + o(|x-P_\eps(z)|d_\eps^s(x)). 
\]
We can now separate the term $\nabla \eta(P_\eps(z))\cdot x$ between its normal and its tangential directions to $\partial\Omega_\eps$ at $P_\eps(z)$. Thus, we have
\begin{equation}
\label{eq.expandas}
v_\eps(x) = V_{0,\eps} d^s_\eps(x) + V_{1,\eps} d^{1+s}_\eps(x)+(\tilde V_{\tau,\eps} \cdot x) d_\eps^s(x) + o(|x-P_\eps(z)|d_\eps^s(x)),
\end{equation}
where $\tilde V_\tau = \nabla_\tau \eta(P_\eps(z))$ is the tangential (to $\partial\Omega_\eps$) gradient of $\eta$ at $P_\eps(z)$, and $x\in \Omega_\eps$, and we have denoted for convenience $V_{0,\eps} = \eta_\eps(P_\eps(z))$ and $V_{1,\eps} = \partial_\nu \eta_\eps(P_\eps(z))$. Notice that, from the convergence of $v_\eps\to u$ as $\eps\downarrow 0$, we have 
\[
V_{0,\eps} = 1+o(1),\qquad  |\tilde V_{\tau,\eps}| = o(1),
\]
i.e. $\tilde V_{\tau,\eps}\to 0$ and $V_{0,\eps}\to 1$ as $\eps\downarrow 0$.

As in \eqref{eq.dist_asimpt} we have that, if $x = z+t f(z)\nu(z)$ for $t > 0$,
\[
d_\eps(x) = (t-\eps f(z))(1 + o(\eps)). 
\]
That is, 
\begin{equation}
\label{eq.ded115}
\begin{split}
v_\eps(x) & = V_{0,\eps} (t-\eps f(z))^s+ V_{1,\eps} (t-\eps f(z))^{1+s}\,+
\\& \quad +(\tilde V_{\tau,\eps} \cdot x) (t-\eps f(z))^s+ o((t-\eps f(z))^{1+s}) + o(\eps),
\end{split}
\end{equation}
We can rewrite it as  
\begin{align*}
v_\eps(x)  & = V_{0,\eps} \left(t^s-st^{s-1}\eps f(z) + o(\eps)\right) + V_{1,\eps} \left(t^{1+s} -(1+s)t^s\eps f(z) + o(\eps)\right)\,+ \\
& \quad +(\tilde V_{\tau,\eps} \cdot x) \left(t^s-st^{s-1}\eps f(z) + o(\eps)\right)+o((t-\eps f(x))^{1+s})+o(\eps).
\end{align*}
We want now to consider $\frac{u-v_\eps}{\eps}$ and let $\eps \downarrow 0$. Notice that $(\tilde V_{\tau,\eps}\cdot x) = t\, o(1)$, so
\begin{align*}
(u-v_\eps)(x)& = s f(z) \eps t^{s-1} + \left(1-V_{0,\eps} +(1+s)\eps V_{1,\eps} f(z)\right)t^s\,+\\
& \quad + (U_1(z) - V_{1,\eps} )t^{1+s} + o(\eps) + \eps o(t^{1+s}).
\end{align*}
That is, recalling $x = z+tf(z) \nu(z)$, 
\begin{align*}
F_s(x) = \lim_{\eps\downarrow 0} \frac{(u-v_\eps)(x)}{\eps} = & \,s f(z) t^{s-1} + \left( \lim_{\eps\downarrow 0}\frac{1-V_{0,\eps}}{\eps} + (1+s) V_{1,\eps} f(z) \right) t^s\,+\\
& \quad + t^{1+s}\lim_{\eps\downarrow 0}\frac{U_1(z) - V_{1,\eps}}{\eps} + o(t^{1+s}). 
\end{align*}
In particular, $F_s$ is a solution to 
\[
\left\{
\begin{array}{rcll}
\fls F_s & = & 0 & \quad\textrm{in } (\Omega \cap B)\\
F_s & = & 0 & \quad\textrm{in } (\Omega\cap B)^c \vspace{1mm}\\
\displaystyle \frac{F_s}{d^{s-1}} & = & s f& \quad \textrm{on }\partial \Omega,
\end{array}
\right.
\]
which is a well-posed Dirichlet-type problem for the fractional Laplacian (see \cite{Aba15}).
Moreover, from the expansion of solutions at points on the boundary, we know that the term in $t^{1+s}$ in the expansion of $F_s$ is bounded, so 
\begin{equation}
\label{eq.v0v10}
\lim_{\eps \downarrow 0 } V_{1,\eps} = U_1(z)\quad\Longleftrightarrow \quad V_{1,\eps} = U_1(z) + o(1). 
\end{equation}
Finally, from the coefficient of $t^s$, which corresponds to $\partial_\nu\left(\frac{F_s}{d^{s-1}}\right) $, and using the previous expression \eqref{eq.v0v10}, we have
\[
\partial_\nu\left(\frac{F_s}{d^{s-1}}\right) =  \lim_{\eps\downarrow 0}\frac{1-V_{0,\eps}}{\eps} + (1+s) U_1(z) f(z).
\]
Alternatively,
\begin{equation}
\label{eq.v0v1}
V_{0,\eps} = V_{0,\eps}(z) = 1-\eps \partial_\nu\left(\frac{F_s}{d^{s-1}}\right)+(1+s) \eps U_1(z) f(z) +o(\eps),
\end{equation}
we have an expansion up to order $\eps$ for $V_{0,\eps}$. 
\\[0.1cm]
{\bf Step 3.}
We now want to perform an expansion of $\mathcal{J}_{\Lambda, B}(v_\eps) - \mathcal{J}_{\Lambda, B}(u)$. This is formed by two terms (see \eqref{eq.20}). 

On the one hand, let us, as in the proof of Proposition~\ref{prop.crs}, consider a parametri\-zation of the points in $\Theta_\eps = (\Omega\setminus \Omega_\eps)\cap B$ as $z+t\nu(z)$ with $z\in \partial\Omega$, $0 < t < \eps f(z)$. We need another term in the expansion of the volume element, which is $dV = (1+Ht + o(t) ) d\sigma(z) dt$. 

Thus, we expand equation~\eqref{eq.21} to one more term as
\[
|\{u > 0\}| - |\{v_\eps > 0\}| = |\Theta_\eps| = \int_{\partial \Omega} \int_0^{\eps f(z) } \big(1+tH(z) + o(t) \big)\, dt\, d\sigma(z),
\]
where $H(z)$ is the mean curvature of $\partial \Omega$ at $z$ (with respect to $\{u = 0\}$). Namely,  
\begin{equation}
\label{eq.diffmean}
|\{u > 0\}| - |\{v_\eps > 0\}| = \eps \int_{\partial \Omega} f + \frac{\eps^2}{2} \int_{\partial \Omega} H f^2 + o(\eps^2).
\end{equation}

On the other hand, let us now compute the remaining term in $\mathcal{J}_{\Lambda, B}(v_\eps)-\mathcal{J}_{\Lambda, B}(u)$. Namely, 
\[
\int_{\Theta_\eps} u \fls v_\eps. 
\]
We expand both $u$ and $\fls v_\eps$ as in the proof of Proposition~\ref{prop.crs}, but we need one more term in the expansion now. First, we already know
\begin{equation}
\label{eq.epsexpu}
u(z+t\nu(z) ) = t^s + U_1(z) t^{1+s} + o(t^{1+s}).
\end{equation}

We can also expand $v_\eps$ at $x_\circ = z+\eps f(z) \nu(z)\in \partial \Omega_\eps$ as \eqref{eq.expandas}. 
By Lemma~\ref{lem.form2} we then have that 
\begin{align*}
\fls v_\eps(x) & = \bar c_s V_{0,\eps} d^{-s}_\eps(x) + \bar c_{1+s} V_{1,\eps} d^{1-s}_\eps(x) + \bar c_* V_{0,\eps} H_\eps  d^{1-s}_\eps(x)\,+\\
& \qquad\qquad\qquad\quad+\bar c_s (\tilde V_{\tau,\eps} \cdot x)  d^{-s}_\eps(x)+ o(|x-P_\eps(z)|d_\eps^s(x)) \quad \text{ in }\Omega_\eps^c,
\end{align*}
where $H_\eps(z) = H(z+\eps f(z)\nu(z))$ is the mean curvature of $\partial\Omega_\eps$ (with respect to $\{v_\eps =0 \}$). That is, from \eqref{eq.dist_asimpt} and as in the deduction of \eqref{eq.ded115},
 \begin{equation}
 \label{eq.flsvepsexp}
 \begin{split}
  \fls v_\eps(x) & = \bigg[\bar c_s V_{0,\eps}(z) (\eps f(z) - t)^{-s} +  \bar c_* V_{0, \eps}(z) H_\eps (\eps f(z) - t)^{1-s} \,+\\
 & \qquad + \bar c_{1+s} V_{1,\eps}(z) (\eps f(z) - t)^{1-s}+\bar c_s(\tilde V_{\tau,\eps}\cdot x)(\eps f(z) -t)^{-s}\bigg](1+o(\eps)). 
 \end{split}
 \end{equation}
Now, we can compute
\[
\int_{\Theta_\eps} u \fls v_\eps = \int_{\partial \Omega}\int_0^{\eps f(z)} u(z+t\nu(z)) \fls v_\eps (z+t\nu(z)) \big(1+tH + o(t)\big) \, dt\, d\sigma(z). 
\]
Using \eqref{eq.epsexpu} and \eqref{eq.flsvepsexp}, and noticing that the term involving $\tilde V_{\tau,\eps}$ is $o(\eps^2)$ in the integral, since $\tilde V_{\tau,\eps} = o(1)$, we have that 
\begin{align*}
&\int_{\Theta_\eps} u \fls v_\eps  =\\
& ~~ = \int_{\partial \Omega}\int_0^{\eps f(z)} t^s(1+U_1(z) t)(\eps f(z) - t)^{-s}\bigg\{ \bar c_* V_{0,\eps}(z) H_\eps(z) (\eps f(z) - t)\,+ \\
& \qquad  +\bar c_s V_{0,\eps}(z)+ \bar c_{1+s} V_{1,\eps}(z) (\eps f(z) - t)\bigg\}(1+tH(z))dtd\sigma + o(\eps^2)\\
& = \eps \int_{\partial \Omega}f(z)\int_0^{1} \left(\frac{t}{1-t}\right)^s(1+U_1(z)\eps f(z) t)\bigg\{ \bar c_* V_{0,\eps}(z) H_\eps(z) \eps f(z) (1 - t)\,+\\
& \qquad + \bar c_s V_{0,\eps}(z) + \bar c_{1+s} V_{1,\eps}(z) \eps f(z)(1 - t)\bigg\}(1+\eps f(z) tH(z))dtd\sigma + o(\eps^2).
\end{align*}
By making use now of the expansions of $V_{0,\eps}(z) $ and $V_{1, \eps}(z)$, \eqref{eq.v0v1} and \eqref{eq.v0v10}, 
 \begin{align*}
\int_{\Theta_\eps} u \fls v_\eps   = &\,\eps \int_{\partial \Omega}f(z)\int_0^{1} \left(\frac{t}{1-t}\right)^s(1+U_1(z)\eps f(z) t)\bigg\{ \bar c_*  H_\eps(z) \eps f(z) (1 - t)\,+ \\
&  + \bar c_s  \big(1-\eps\partial_\nu (F_s d^{1-s}) + (1+s)\eps f(z) U_1(z) \big) \\
& + \bar c_{1+s} U_1(z) \eps f(z)(1 - t)\bigg\}\big(1+\eps f(z) t H(z)\big)dt\,d\sigma + o(\eps^2).
\end{align*}
 Since the terms of order $\eps$ will vanish with those in \eqref{eq.diffmean} (by Proposition~\ref{prop.crs}), we are interested in the terms of order $\eps^2$. That is, if 
 \begin{equation}
 \label{eq.31}
 \int_{\Theta_\eps} u \fls v_\eps = \mathcal{K}_1\eps + \mathcal{K}_2 \eps^2+ o(\eps^2), 
 \end{equation}
 then we are interested in $\mathcal{K}_2$. From the previous expressions, also using that $H_\eps(z) = H(z) + o(1)$, 
 \begin{align*}
 \mathcal{K}_2 = & \int_{\partial \Omega} f \int_0^1 \left(\frac{t}{1-t}\right)^s\bigg[\bar c_s  U_1(z) f t  + \bar c_s f H t  -\bar c_s \partial_\nu (F_s d^{1-s})   \\
 &+\bar c_s (1+s) f(z) U_1(z)+\bar c_* H f (1-t)+ \bar c_{1+s} U_1 f (1-t)\bigg]\, dt\, d\sigma.
 \end{align*}
 Now notice that 
  \[
 \int_0^1 \left(\frac{t}{1-t}\right)^s  \, dt = s  \Gamma(s) \Gamma(1-s),
 \]
 \[
 \int_0^1 \left(\frac{t}{1-t}\right)^s t \, dt = \frac12 \Gamma(2+s) \Gamma(1-s),
 \]
  \[
 \int_0^1 \left(\frac{t}{1-t}\right)^s (1-t) \, dt = \frac12  \Gamma(1+s) \Gamma(2-s).
 \]
 In particular, using the values of $\bar c_s$ and $\bar c_{1+s}$ we have
 \begin{align*}
 \mathcal{K}_2 =&  \int_{\partial \Omega} H f^2 \int_0^1 \left(\frac{t}{1-t}\right)^s\left[\bar c_s t + \bar c_* (1-t) \right]\, dt\, d\sigma\,+ \\
 &  +(\Gamma(1+s))^2 \left(\int_{\partial\Omega} f \partial_\nu (F_s d^{1-s}) d\sigma-(1+s) \int_{\partial \Omega} U_1 f^2 \, d\sigma \right). 
 \end{align*}
 A direct computation yields that 
 \begin{equation}
 \label{eq.32}
 \mathcal{K}_2 = (\Gamma(1+s))^2 \left\{ \int_{\partial\Omega} f \partial_\nu (F_s d^{1-s}) d\sigma- \frac{1}{2}\int_{\partial \Omega} H f^2d\sigma   -(1+s) \int_{\partial \Omega} U_1 f^2 \, d\sigma\right\}.
 \end{equation}
 
 That is, recalling that $\Lambda = \Gamma(1+s)$, and from \eqref{eq.20}-\eqref{eq.diffmean}-\eqref{eq.31}-\eqref{eq.32}
 \begin{align*}
\mathcal{J}_{\Lambda, B}(v_\eps) & - \mathcal{J}_{\Lambda, B}(u) = \\
& 
= - (\Gamma(1+s))^2 \bigg\{ \int_{\partial\Omega} f \partial_\nu (F_s d^{1-s}) d\sigma- \frac{1}{2}\int_{\partial \Omega} H f^2d\sigma   \\
& \quad -(1+s) \int_{\partial \Omega} U_1 f^2 \, d\sigma \bigg\} \eps^2 -(\Gamma(1+s))^2\frac{\eps^2}{2}\int_{\partial\Omega} Hf^2 d\sigma + o(\eps^2).
 \end{align*}
 That is,
 \[
 	\frac{\mathcal{J}_{\Lambda, B}(v_\eps) - \mathcal{J}_{\Lambda, B}(u)}{\Gamma(1+s)^2} = \left\{ (1+s) \int_{\partial \Omega} U_1 f^2 \, d\sigma-\int_{\partial\Omega} f \partial_\nu (F_s d^{1-s}) d\sigma\right\}\eps^2 + o(\eps^2). 
 \]
 Now, since $u$ is a minimizer or a stable critical point, we get the desired result. 
 \\[0.1cm]
 {\bf Step 4.} Let us now show how to take general $f\in C^\infty_c(\partial\Omega)$ without the sign restriction. Just split $f = f_+ - f_-$ with $f_+ = \max\{f, 0\}$ and $f_- = -\min\{f, 0\}$, so that $f_+ \ge 0$ and $f_- \ge 0$. By the previous steps, we then have that 
 \[
-(1+s) \int_{\partial \Omega} \partial_\nu\left(\frac{u}{d^s}\right) f_\pm^2 \, d\sigma \le -\int_{\partial \Omega} f_\pm \partial_\nu\left(\frac{F_\pm}{d^{s-1}}\right)d\sigma,
\]
where $F_\pm$ is the solution to 
\[
\left\{
\begin{array}{rcll}
\fls F_\pm & = & 0 & \quad\textrm{in } (\Omega \cap B)\\
F_\pm & = & 0 & \quad\textrm{in } (\Omega\cap B)^c\\
\frac{F_\pm}{d^{s-1}} & = & s f_\pm& \quad \textrm{on }\partial \Omega.
\end{array}
\right.
\]
Let us also consider the solution $F_s$ to 
\[
\left\{
\begin{array}{rcll}
\fls F_s & = & 0 & \quad\textrm{in } (\Omega \cap B)\\
F_s & = & 0 & \quad\textrm{in } (\Omega\cap B)^c\\
\frac{F_s}{d^{s-1}} & = & s f& \quad \textrm{on }\partial \Omega.
\end{array}
\right.
\]
Notice that by linearity (and uniqueness of solution) we have that $F_s  = F_+ - F_-$. In particular, we have that 
\[
f \partial_\nu (F_sd^{1-s})= f_+\partial_\nu (F_+d^{1-s}) + f_- \partial_\nu (F_-d^{1-s}) - f_+ \partial_\nu (F_-d^{1-s}) - f_- \partial_\nu (F_+d^{1-s}).
\]

Notice, also, that when $f_+ > 0$, then $f_- = 0$ and therefore we must have $\partial_\nu(F_- d^{1-s}) \ge 0$. Thus, $f_+ \partial_\nu (F_-d^{1-s}) \ge 0$ and $f_- \partial_\nu (F_+d^{1-s}) \ge 0$, and 
\begin{equation}
\label{eq.ffpfm}
f \partial_\nu (F_sd^{1-s})\le f_+\partial_\nu (F_+d^{1-s}) + f_- \partial_\nu (F_-d^{1-s}).
\end{equation}
Thus
\begin{align*}
-(1+s) \int_{\partial \Omega} \partial_\nu\left(\frac{u}{d^s}\right) f^2 \, d\sigma & = -(1+s) \int_{\partial \Omega} \partial_\nu\left(\frac{u}{d^s}\right) (f_+^2+f_-^2) \, d\sigma\\
& \le -\int_{\partial \Omega} f_+ \partial_\nu\left(\frac{F_+}{d^{s-1}}\right)d\sigma-\int_{\partial \Omega} f_- \partial_\nu\left(\frac{F_-}{d^{s-1}}\right)d\sigma\\
& \le -\int_{\partial \Omega} f \partial_\nu\left(\frac{F_s}{d^{s-1}}\right)d\sigma,
\end{align*}
where in the last inequality we are using \eqref{eq.ffpfm}. The proof is now complete, noticing that $F_s = s F$. 
\end{proof}

Let us now prove our main result, the stability condition in Theorem~\ref{thm.main}. Before doing so, we first prove the following result, which is valid for non-homogeneous functions, too. 

\begin{thm}
\label{thm.cor.1}
Let $u\in C^s(\R^n)$ be a global  stable solution for \eqref{eq.energy}, in the sense \eqref{eq.criticalpoint}-\eqref{eq.stablesolution}. Let $\Omega \coloneqq \{u > 0\}$, and assume that $\partial\Omega$ is $C^{2,\alpha}$ outside the origin. 
Let us consider $\mathcal{K}_{\Omega, s}$ and $H_{\Omega, s}$ as defined in Definition~\ref{defi.kernel}. 

Then, we have
\begin{equation}
\label{eq.stabilityFin}
 \int_{\partial\Omega} \int_{\partial\Omega} \big(f(x) - f(y)\big)^2\mathcal{K}_{\Omega, s}(x, y)d\sigma(x)d\sigma(y) \ge \int_{\partial\Omega} H_{\Omega, s}  f^2 d\sigma 
\end{equation}
for all $f\in C^\infty_c(\partial\Omega\setminus \{0\})$.
\end{thm}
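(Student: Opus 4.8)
\emph{Proof plan.} The plan is to start from the inequality of Proposition~\ref{prop.main} and rewrite it entirely in terms of the kernel $\mathcal{K}_{\Omega,s}$. First, I would apply Proposition~\ref{prop.main} on a ball $B=B_R$ (its proof is local near $\partial\Omega\cap{\rm supp}(f)$, hence insensitive to the behavior of $\partial\Omega$ near the origin, where it may fail to be $C^{2,\alpha}$), obtaining, for each large $R$,
\[
-\frac{1+s}{s}\cdot\frac{\Gamma(1+s)}{\Lambda}\int_{\partial\Omega}\partial_\nu\!\left(\frac{u}{d^s}\right)f^2\,d\sigma\;\le\;-\int_{\partial\Omega}f\,\partial_\nu\!\left(\frac{F_R}{d^{s-1}}\right)d\sigma ,
\]
where $F_R$ solves \eqref{eq.FinB} in $\Omega\cap B_R$. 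Then I would pass to the limit $R\to\infty$: by the well-posedness and uniqueness of the large-solution problem \cite{Aba15,Gru15} together with the boundary regularity for $\fls$ in $C^{2,\alpha}$ domains \cite{RS17,AR20}, $F_R\to F$ locally uniformly in $\overline{\Omega}\setminus\{0\}$ and the boundary expansions pass to the limit, so the inequality survives with $F_R$ replaced by the global decaying large solution $F$ from the introduction. Writing $T_{\Omega,s}f\coloneqq-\partial_\nu(F/d^{s-1})$, what remains is to express $\int_{\partial\Omega}f\,T_{\Omega,s}f\,d\sigma$ through $\mathcal{K}_{\Omega,s}$.

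The central step is the identity
\[
T_{\Omega,s}f(x)\;=\;c_s\,{\rm PV}\!\int_{\partial\Omega}\big(f(x)-f(y)\big)\,\mathcal{K}_{\Omega,s}(x,y)\,d\sigma(y)\;+\;b(x)\,f(x),\qquad c_s>0,
\]
with an explicit local coefficient $b$. I would obtain the integral term from the Green-function representation of the large solution: by Definition~\ref{defi.kernel}, $\mathcal{K}_{\Omega,s}$ is the double boundary limit of $G_{\Omega,s}(\bar x,\bar y)/\big(d^s(\bar x)d^s(\bar y)\big)$, and the Poisson-type kernel for the $E$-trace problem \eqref{eq.FinB} is built from $G_{\Omega,s}$ by a boundary differentiation in the source variable; isolating the coefficient of $d^s$ in the expansion of $F(x)/d^{s-1}(x)$ along the inward normal $x=x'+t\nu(x')$ then produces the displayed formula. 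The principal value is genuinely needed because $\mathcal{K}_{\Omega,s}\asymp|x-y|^{-n}$ is not integrable across the $(n-1)$-dimensional surface $\partial\Omega$ (the $\sqrt{-\Delta}$-type scaling mentioned in the introduction), but the integral converges once tested against the increment $f(x)-f(y)=O(|x-y|)$.

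I would then pin down $b$ by feeding the \emph{same} representation to the solution $u$ itself: since $u/d^s\equiv\Lambda/\Gamma(1+s)$ on $\partial\Omega$ by Proposition~\ref{prop.crs}, the function $u$ is reconstructed from constant boundary data through the Martin kernel of $G_{\Omega,s}$, and differentiating this reconstruction along the inward normal expresses $\partial_\nu(u/d^s)(x)$ in terms of $\int_{\partial\Omega}\mathcal{K}_{\Omega,s}(x,y)\,d\sigma(y)$ plus a correction arising from the expansion of the distance function $d$ near $\partial\Omega$, in which the difference of inward normals $|\nu(x)-\nu(y)|^2$ --- hence $\mathcal{H}_{\Omega,s}(x)=\int_{\partial\Omega}|\nu(x)-\nu(y)|^2\mathcal{K}_{\Omega,s}(x,y)\,d\sigma(y)$ --- appears. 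Matching this with the boundary expansion of $F/d^{s-1}$ should yield
\[
b(x)\;=\;-\frac{c_s}{2}\,\mathcal{H}_{\Omega,s}(x)\;-\;\frac{1+s}{s}\cdot\frac{\Gamma(1+s)}{\Lambda}\,\partial_\nu\!\left(\frac{u}{d^s}\right)(x).
\]
Plugging both identities into the inequality above, and using the symmetry of $\mathcal{K}_{\Omega,s}$ to write
\[
\int_{\partial\Omega}\!\!\int_{\partial\Omega}f(x)\big(f(x)-f(y)\big)\mathcal{K}_{\Omega,s}(x,y)\,d\sigma(x)\,d\sigma(y)=\frac12\int_{\partial\Omega}\!\!\int_{\partial\Omega}\big(f(x)-f(y)\big)^2\mathcal{K}_{\Omega,s}(x,y)\,d\sigma(x)\,d\sigma(y)
\]
(legitimate since $f$ is Lipschitz, so $(f(x)-f(y))^2\mathcal{K}_{\Omega,s}$ is integrable), the terms carrying $\partial_\nu(u/d^s)$ cancel and the constant $c_s>0$ factors out, leaving exactly \eqref{eq.stabilityFin}.

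\emph{Main obstacle.} The hard part will be the two Green-function identities, for $T_{\Omega,s}f$ and for $b$. They require the sharp first-order boundary behavior of $G_{\Omega,s}$ in a $C^{2,\alpha}$ domain --- not merely $G_{\Omega,s}\asymp d^s(x)d^s(y)|x-y|^{-n}$ but the full expansion, including its curvature-dependent correction --- and a careful accounting of which divergent boundary integrals are to be read as principal values, so that the local leftovers produced by $F$ and by $u$ recombine into exactly the density $\mathcal{H}_{\Omega,s}$ of Definition~\ref{defi.kernel}. This is the step where the fact that $U_1=-\Lambda^{-1}\partial_\nu(u/d^s)$ is \emph{related but not equal} to $\mathcal{H}_{\Omega,s}$ turns into a genuine computation rather than a cosmetic rearrangement.
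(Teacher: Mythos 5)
Your overall plan matches the paper's: apply Proposition~\ref{prop.main}, pass to a global large solution, write $T_{\Omega,s}$ as a kernel operator built from $\mathcal{K}_{\Omega,s}$ plus a local term $b$, then cancel the $\partial_\nu(u/d^s)$ piece against $b$ and symmetrize. Your guessed formula for $b$ is in fact correct (with $c_s=1$, $b(x)=-T_\Omega(1)(x)$ in the paper's notation, and the paper indeed shows $T_\Omega(1)(x)=\tfrac{(1+s)\Gamma(1+s)}{s\Lambda}\partial_\nu(u/d^s)(x)+\tfrac12\mathcal H_{\Omega,s}(x)$). You also correctly flag the principal value issue that the paper leaves implicit.

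The gap is in the mechanism you propose to derive $b$. You want to ``reconstruct $u$ from constant boundary data through the Martin kernel of $G_{\Omega,s}$'' and differentiate along $\nu$. But $u\sim d^s$ at $\partial\Omega$ is a \emph{standard} solution, not a large one: the Martin-kernel representation $F(x)=\int_{\partial\Omega}f(y)\bar G_{\Omega,s}(x,y)\,d\sigma(y)$ produces only functions blowing up like $d^{s-1}$, so it cannot reproduce $u$. (In a bounded $C^{2,\alpha}$ domain the only $s$-harmonic function vanishing on $\Omega^c$ is $u\equiv 0$; in a cone, $u$ corresponds to a Martin point at infinity, not to a measure concentrated on $\partial\Omega$.) Consequently there is no ``reconstruction of $u$'' to differentiate, and the route as stated stalls. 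The idea the paper actually uses is a sharper version of this intuition: \emph{differentiate $u$ first}. For a fixed direction $\boldsymbol e$, the function $u_{\boldsymbol e}=\partial_{\boldsymbol e}u$ \emph{is} a large solution, with explicit boundary data
\[
\frac{u_{\boldsymbol e}}{d^{s-1}}(x)=\frac{s\Lambda}{\Gamma(1+s)}\,\boldsymbol e\cdot\nu(x)\qquad\text{on }\partial\Omega,
\]
and the next coefficient in its expansion computes $\partial_\nu(u_{\boldsymbol e}/d^{s-1})(x_\circ)=(1+s)\,\partial_\nu(u/d^s)(x_\circ)$ when $\boldsymbol e=\nu(x_\circ)$. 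Feeding this into the kernel representation gives
\[
\frac{s\Lambda}{\Gamma(1+s)}\,T_\Omega\bigl(\nu(x_\circ)\cdot\nu(\cdot)\bigr)(x_\circ)=(1+s)\,\partial_\nu\!\left(\frac{u}{d^s}\right)(x_\circ),
\]
and since $1-\nu(x_\circ)\cdot\nu(\cdot)$ vanishes at $x_\circ$, the problematic $T_\Omega(1)$ drops out of $T_\Omega\bigl(1-\nu(x_\circ)\cdot\nu(\cdot)\bigr)(x_\circ)$, leaving exactly $\tfrac12\int_{\partial\Omega}|\nu(x_\circ)-\nu(y)|^2\mathcal{K}_{\Omega,s}(x_\circ,y)\,d\sigma(y)=\tfrac12\mathcal H_{\Omega,s}(x_\circ)$. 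This ``use $\partial_{\boldsymbol e}u$ as a test large solution'' trick is the concrete step your sketch is missing; without it the curvature term $\mathcal H_{\Omega,s}$ does not fall out of a generic expansion of $d$.
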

\begin{proof}
Let $\Omega\subset\R^n$ be a smooth domain (not necessarily bounded). Let $f\in C^\infty_c(\partial \Omega)$ be a smooth function defined on the  boundary of $\Omega$. Alternatively, let us assume $\partial\Omega$ is smooth in ${\rm supp}\, f$. We then define $T_\Omega(f):\partial \Omega\to \R$ as follows. 

Let $F$ be the unique solution to 
\begin{equation}
\label{eq.Finall}
\left\{
\begin{array}{rcll}
\fls F & = & 0 & \quad\textrm{in } \Omega \\
F & = & 0 & \quad\textrm{in } \R^n\setminus \Omega\\
\displaystyle\frac{F}{d^{s-1}} & = &  f& \quad \textrm{on }\partial \Omega\\
F(x) & \to & 0 & \quad\textrm{as } |x|\to \infty,
\end{array}
\right.
\end{equation}
which can be obtained by a Green kernel representation (as in \cite{Aba15}) or as the limit, when ${\rm diam}(B)\to \infty$, of solutions to \eqref{eq.FinB}. Then, we define for $x_\circ \in \partial\Omega$,
\begin{equation}
\label{eq.defL}
T_\Omega(f)(x_\circ)  = \partial_\nu \left(\frac{F}{d^{s-1}}\right)(x_\circ) = \lim_{\Omega\ni x\to x_\circ} \frac{F(x)-d^{s-1}(x)f(x_\circ)}{d^s(x)},
\end{equation}
where $\nu = \nu(x)\in \mathbb{S}^{n-1}$ denotes the unit inward normal vector to $\partial\Omega$ at $x$.

Let us denote by $G_{\Omega,s}(x, y)$ the Green function of the operator $\fls$ for the domain $\Omega$. That is, given a function $h:\Omega\to \R$, $u_h(x) = \int_\Omega h(y) G_{\Omega, s}(x, y) \, dy$ satisfies 
\[
\left\{
\begin{array}{rcll}
\fls u_h & = & h & \quad\textrm{in } \Omega \\
u_h & = & 0 & \quad\textrm{in } \R^n\setminus \Omega.
\end{array}
\right.
\]

Then, we define for each $x\in \Omega$, $y\in \partial\Omega$, 
\[
\bar G_{\Omega, s}(x, y) \coloneqq \lim_{\Omega\ni\bar y \to y} \frac{G_{\Omega, s}(x, \bar y)}{d^s(\bar y)}
\]
which is well defined by the Green function estimates (see \cite{CS98}). 
By the arguments in \cite{Aba15}, given $f\in C^\infty_c(\partial\Omega)$ then we have that
\[
F(x) = \int_{\partial\Omega}  f(y) \bar G_{\Omega, s}(x, y) \, d\sigma(y)
\]
satisfies \eqref{eq.Finall}. Now notice that, from \eqref{eq.defL},
\begin{align*}
T_\Omega(f)(x_\circ) & = \lim_{\Omega\ni x\to x_\circ} \frac{1}{d(x)}\left\{\int_{\partial\Omega} f(y) \frac{\bar G_{\Omega, s}(x, y)}{d^{s-1}(x)}\, d\sigma(y) -f(x_\circ)\right\}\\
& = \lim_{\Omega\ni x\to x_\circ}\int_{\partial\Omega} (f(y) - f(x_\circ)) \frac{\bar G_{\Omega, s}(x, y)}{d^{s}(x)} d\sigma(y)\,+\\
& \quad + f(x_\circ) \lim_{\Omega\ni x\to x_\circ}\frac{1}{d(x)}\left\{\int_{\partial\Omega} \frac{\bar G_{\Omega, s}(x, y)}{d^{s-1}(x)}\, d\sigma(y) -1\right\}.
\end{align*}

We now see that the second term corresponds to the operator $T_\Omega$ applied to the constant function 1, $f(x_\circ)T_\Omega(1)$. For the first term, we recover the kernel $\mathcal{K}_{\Omega, s}$ from Definition~\ref{defi.kernel}. Thus, 
\begin{equation}
\label{eq.Tomega}
T_\Omega(f)(x) = \int_{\partial\Omega} \big(f(y) - f(x)\big) \mathcal{K}_{\Omega, s}(x, y) d\sigma(y)  + f(x) T_\Omega(1). 
\end{equation}

Taking limits when ${\rm diam}( B) \to \infty$, condition \eqref{eq.condFinB} in Proposition~\ref{prop.main} can be expressed as 
\[
-C_s \int_{\partial \Omega} \partial_\nu\left(\frac{u}{d^s}\right) f^2  \le -\int_{\partial \Omega} fT_\Omega(f),\qquad C_s = \frac{1+s}{s\Lambda} \Gamma(1+s).
\]
Using \eqref{eq.Tomega} and from the symmetry of the kernel, $\mathcal{K}_{\Omega, s}$, and re-ordering terms we have
\begin{equation}
\label{eq.exp1}
\int_{\partial\Omega} \left(T_\Omega(1) - C_s U_1 \right) f^2d\sigma \le \frac12 \int_{\partial\Omega}\int_{\partial\Omega} \left(f(x) - f(y)\right)^2\mathcal{K}_{\Omega, s}(x, y)d\sigma(x)d\sigma(y), 
\end{equation}
where we are defining, for $x\in \partial\Omega$,
\[
U_1(x) \coloneqq \partial_\nu\left(\frac{u}{d^s}\right)(x).
\]
That is, we can expand $u$ at boundary points as
\[
u = \frac{\Lambda}{\Gamma(1+s)} d^s + U_1 d^{1+s} + \dots.
\]
Let now $\be \in \mathbb{S}^{n-1}$, and consider the function $\partial_{\be} u$. From the previous expansion, $u_\be \coloneqq \partial_\be u$ satisfies
\[
\left\{
\begin{array}{rcll}
\fls  u_\be& = & 0 & \quad\textrm{in } \Omega \\
u_\be & = & 0 & \quad\textrm{in } \R^n\setminus \Omega\\
\displaystyle\frac{u_\be}{d^{s-1}}(x) & = &  \be \cdot \nu(x) \frac{s\Lambda}{\Gamma(1+s)} & \quad \textrm{for }x\in \partial \Omega\\
u_\be(x) & \to & 0 & \quad\textrm{as } |x|\to \infty,
\end{array}
\right.
\]
where $\nu(x)$ denotes the unit inward normal vector at $x\in \partial\Omega$. Thus, we can compute $U_1(x_\circ)$ at $x_\circ\in \partial\Omega$ as 
\[
\frac{s\Lambda}{\Gamma(1+s)} T_\Omega(v_{x_\circ})(x_\circ) = (1+s) U_1(x_\circ). 
\]
where $v_{x_\circ}(x) \coloneqq \nu(x_\circ) \cdot \nu(x)$. 

Putting it back in \eqref{eq.exp1} we get 
\[
\int_{\partial\Omega} T_\Omega(1-v_x)(x)  f(x)^2d\sigma(x) \le \frac12 \int_{\partial\Omega}\int_{\partial\Omega} \left(f(x) - f(y)\right)^2\mathcal{K}_{\Omega, s}(x, y)d\sigma(x)d\sigma(y).
\]

Notice now that $1-v_{x_\circ} (x)= \frac12 |\nu(x_\circ) - \nu(x)|^2$ so, if we define 
\[
H_{\Omega, s}(x) \coloneqq T_{\Omega}(|\nu(x)- \nu(\cdot)|^2)(x),
\]
then the stability condition reads as 
\[
\int_{\partial\Omega} H_{\Omega, s}  f^2 d\sigma \le \int_{\partial\Omega}\int_{\partial\Omega} \left(f(x) - f(y)\right)^2\mathcal{K}_{\Omega, s}(x, y)d\sigma(x)d\sigma(y).
\]
for all $f\in C^\infty_c(\partial\Omega)$ and assuming $\partial\Omega$ is smooth on ${\rm supp} f$. 

On the other hand, if $g(y) = |\nu(x) - \nu(y)|^2$ then $g(x) = 0$ and so we can express
\[
H_{\Omega, s}(x) \coloneqq \int_{\partial \Omega} |\nu(x) - \nu(y)|^2 \mathcal{K}_{\Omega, s}(x, y) d\sigma(y),
\]
as we wanted to see. 
\end{proof}

In case of homogeneous solutions, we have the following:

\begin{lem}
\label{lem.estimatesK}
Let $u$ be a global $s$-homogeneous solution to the fractional one-phase problem, i.e.,
\[
\left\{
\begin{array}{rcll}
\fls u & = & 0 & \quad\textrm{in } \Omega\\
u & = & 0 & \quad\textrm{in } \Omega^c\\
\displaystyle\Gamma(1+s) \frac{u}{d^{s}} & = & \Lambda& \quad \textrm{on }\partial \Omega.
\end{array}
\right.
\]
Assume that $\Omega$ is a $C^{1,\alpha}$ cone outside the origin. 
Then, the kernel $\mathcal{K}_{\Omega, s}$ defined in \eqref{eq.kernelK_C} is homogeneous of degree $-n$, and satisfies
\[
\frac{1}{C} \frac{1}{|x-y|^n} \le \mathcal{K}_{\Omega, s}(x, y) \le C \frac{1}{|x-y|^n}\quad\text{for all}~~x, y \in \partial\Omega,
\]
for some constant $C$ depending only on $n$, $s$, and $\Omega$. 
\end{lem}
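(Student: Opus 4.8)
My plan is to treat the two assertions separately: the $(-n)$-homogeneity is soft and follows from scaling, while the two-sided bound is the substantive part and will rest on sharp boundary estimates for the Green function. First, for the homogeneity I would use that $\Omega$, being a cone, is invariant under the dilations $x\mapsto\lambda x$, $\lambda>0$. From the scaling identity $\fls[v(\lambda\,\cdot)]=\lambda^{2s}(\fls v)(\lambda\,\cdot)$ together with uniqueness of the Green function of $\fls$ on $\Omega$ (with vanishing exterior data), one gets $G_{\Omega,s}(\lambda\bar x,\lambda\bar y)=\lambda^{2s-n}G_{\Omega,s}(\bar x,\bar y)$; since also $d(\lambda\bar x)=\lambda\,d(\bar x)$, changing variables $\bar x\mapsto\lambda\bar x$, $\bar y\mapsto\lambda\bar y$ in \eqref{eq.kernelK_C} gives $\mathcal{K}_{\Omega,s}(\lambda x,\lambda y)=\lambda^{-n}\mathcal{K}_{\Omega,s}(x,y)$, i.e.\ $\mathcal{K}_{\Omega,s}$ is homogeneous of degree $-n$.

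The core step — and the point I expect to be the main obstacle — is the global two-sided bound for the Green function of the cone,
\[
G_{\Omega,s}(\bar x,\bar y)\asymp \min\!\left(\frac{1}{|\bar x-\bar y|^{n-2s}},\ \frac{d^s(\bar x)\,d^s(\bar y)}{|\bar x-\bar y|^{n}}\right),\qquad \bar x,\bar y\in\Omega,
\]
with constants depending only on $n$, $s$ and $\Omega$. At points of $\partial\Omega$ lying away from the vertex this is the classical estimate for $\fls$ in $C^{1,\alpha}$ domains, which one obtains from the boundary behaviour of $s$-harmonic functions vanishing on the boundary (\cite{RS14,RS17}) together with the interior and boundary Harnack inequalities; since both sides are $(2s-n)$-homogeneous, the verification at all scales reduces to a fixed compact configuration, so no uniformity in scale is lost. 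The genuinely delicate part is the region near the vertex, where $\partial\Omega$ fails to be $C^{1,\alpha}$. There I would invoke the boundary Harnack principle for $\fls$ in Lipschitz cones, applied to $G_{\Omega,s}(\cdot,\bar y)$ and to the positive $s$-harmonic function $u$, and crucially use that \emph{because $u$ is homogeneous of degree exactly $s$} the ratio $u/d^s$ is $0$-homogeneous, continuous and strictly positive on the compact set $\overline\Omega\cap\mathbb{S}^{n-1}$, hence $u\asymp d^s$ uniformly on $\Omega\setminus\{0\}$; thus the homogeneous positive $s$-harmonic reference function behaves like $d^s$ up to the vertex, and the correction factor that would otherwise appear near the origin stays comparable to a constant. (Were $u$ only $\lambda$-homogeneous with $\lambda<s$ — the generic situation for axially symmetric cones — then $u/d^s$ would be $(\lambda-s)$-homogeneous, blow up at the vertex, and $\mathcal{K}_{\Omega,s}$ would fail to be comparable to $|x-y|^{-n}$ near the origin, so the $s$-homogeneity is used in an essential way.)

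Finally, to conclude, I would fix $x\neq y$ in $\partial\Omega$ and let $\Omega\ni\bar x\to x$, $\Omega\ni\bar y\to y$. Then $d(\bar x),d(\bar y)\to0$ while $|\bar x-\bar y|\to|x-y|>0$, so eventually $d^s(\bar x)d^s(\bar y)|\bar x-\bar y|^{-n}\le|\bar x-\bar y|^{2s-n}$ and the minimum above equals its second argument; dividing by $d^s(\bar x)d^s(\bar y)$ and passing to the limit in \eqref{eq.kernelK_C} yields $\mathcal{K}_{\Omega,s}(x,y)\asymp|x-y|^{-n}$, with constants depending only on $n$, $s$ and $\Omega$. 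Together with the homogeneity proved above (and noting that $|x-y|^{-n}$ is itself $(-n)$-homogeneous) this completes the proof.
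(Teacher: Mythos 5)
Your treatment of the $(-n)$-homogeneity via the scaling identity for the Green function is exactly the paper's argument. The two-sided bound is where you diverge: you route the proof through the global two-sided Green function estimate
\[
G_{\Omega,s}(\bar x,\bar y)\asymp \min\!\left(|\bar x-\bar y|^{2s-n},\ \frac{d^s(\bar x)\,d^s(\bar y)}{|\bar x-\bar y|^{n}}\right)
\]
on the whole cone, and then divide by $d^s(\bar x)d^s(\bar y)$ and pass to the limit. The paper instead never establishes (or needs) this global estimate. After noting $u\asymp d^s$ on $\overline\Omega$ — the same crucial use of $s$-homogeneity you identify — it rescales to $|x-y|=1$ and applies the boundary Harnack principle in Lipschitz domains \emph{twice, directly to the quantity defining $\mathcal{K}_{\Omega,s}$}: once in $\bar x$, comparing $G_{\Omega,s}(\cdot,\bar y_0)$ to $u$ to see that the limit $\mathcal M_{\Omega,s,x}(\bar y)=\lim_{\bar x\to x}G(\bar x,\bar y)/d^s(\bar x)$ exists and is positive, and once more in $\bar y$, noting that $\mathcal M_{\Omega,s,x}(\cdot)$ is again $s$-harmonic near $\partial\Omega$ and comparing it to $u$ once more to get $\mathcal M_{\Omega,s,x}(\bar y)\asymp d^s(\bar y)$. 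This gives the sharp $d^s\otimes d^s$ behaviour without ever addressing the free-space regime $|\bar x-\bar y|\lesssim d(\bar x),d(\bar y)$, which your $\min$-formula requires. Your route is a valid alternative in spirit, but the Chen--Song-type estimate you invoke is stated in the literature for bounded $C^{1,1}$ domains, and you are asserting it for an unbounded Lipschitz cone with constants uniform up to the vertex; your sketch of how to close this (scaling reduction plus boundary Harnack near the origin) is plausible but is substantially more to verify than the paper's minimal double boundary Harnack, and you do not actually carry it out. In short: same key observation ($u\asymp d^s$ plus boundary Harnack in Lipschitz cones), but a heavier intermediate claim that would need a genuine proof rather than a citation, whereas the paper's argument is shorter and self-contained.
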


\begin{proof}
The domain $\Omega$ is a cone, so the Green function satisfies the scaling property $G_{\Omega, s}(rx, ry) = r^{2s-n}G_{\Omega, s}(x, y)$.
This implies that
\[
\mathcal{K}_{\Omega, s}(rx, ry) = r^{-n}\mathcal{K}_{\Omega, s}(x, y).
\]
Thus, it only remains to prove that, if $|x-y| = 1$, then 
\[
\frac{1}{C} \le \mathcal{K}_{\Omega, s}(x, y) \le C \quad\text{for all}~~x, y \in \partial\Omega\quad\text{s.t.}\quad |x-y| = 1.
\]

Notice first that, since $u$ is an $s$-harmonic function in $\Omega$, then by well-known estimates in $C^{1,\alpha}$ domains \cite{RS17} we have that $u(x)\asymp d^s(x)$ for $|x|\asymp 1$.
Then, since both $u$ and $d^s$ are homogeneous of degree $s$, we deduce that 
\[u\asymp d^s\quad \textrm{in}\quad \overline\Omega.\]

On the other hand, given $y_0\in \Omega$, we know that the Green function $G_{\Omega,s}(x,y_0)$ is $s$-harmonic in $x$ in the domain $\Omega\setminus\{y_0\}$.
Thus, if $|x-y_0|\asymp 1$, then by the boundary Harnack principle for the fractional Laplacian in Lipschitz domains \cite{Bogdan} (applied to $G_{\Omega,s}(x,y_0)$ and $u(x)$) we know that the function $G(x,y_0)$ must be comparable to $d^s(x)$, for every such $y_0\in \Omega$.
This means that
\[\mathcal M_{\Omega,s,x}(\bar y)\coloneqq\lim_{\Omega\ni \bar x\to x} \frac{G_{\Omega,s}(\bar x,\bar y)}{d^s(\bar x)}\in (0,\infty)\]
for every fixed $x\in\partial\Omega$ and $\bar y\in \Omega$ such that $|x-\bar y|\asymp 1$.
Now notice that, for every fixed $x\in \partial\Omega$, the function $\mathcal M_{\Omega,s,x}(\bar y)$ is $s$-harmonic in $\bar y$ in a neighborhood of $y\in \partial\Omega$, provided that $|x-y|=1$.
Hence, using again the boundary Harnack inequality for the fractional Laplacian, we deduce that actually 
\[\mathcal M_{\Omega,s,x}(\bar y) \asymp d^s(\bar y),\]
which clearly implies $\mathcal{K}_{\Omega, s}(x, y)\asymp~1$, as wanted.
\end{proof}

Thus, as a consequence, we have:

\begin{proof}[Proof of Theorem~\ref{thm.main}]
We apply Theorem~\ref{thm.cor.1} to $s$-homogeneous solutions $u$. In this case, $\Omega$ is a cone, and the homogeneity of $\mathcal{K}_{\Omega, s}$ comes from scaling of the Green function. The estimate \eqref{eq.Kcone} follows from Lemma~\ref{lem.estimatesK}. 

Finally, the $(-1)$-homogeneity of $H_{\Omega, s}$ follows from the $(-n)$-homogeneity of the kernel together with the $0$-homogeneity of $\nu$. 
\end{proof}

\section{Stable cones are trivial in 2D}

\label{sec.4}

Let us now give the proof of Corollary~\ref{cor.2D}, stating that cone-like solutions in $\R^2$ are not stable, in the sense \eqref{eq.criticalpoint}-\eqref{eq.stablesolution}. 

\begin{proof}[Proof of Corollary~\ref{cor.2D}]
We have that
\[
\left\{
\begin{array}{rcll}
\fls u & = & 0 & \quad\textrm{in } \Omega\\
u & = & 0 & \quad\textrm{in } \Omega^c\\
\frac{u}{d^{s}} & = & 1& \quad \textrm{on }\partial \Omega,
\end{array}
\right.
\]
$u\in C^s(\R^2)$ and $\Omega^c = \{u = 0\}$ is a cone.
By boundary regularity for $s$-harmonic functions, it is not difficult to see that the cone $\Omega^c$ cannot have zero density points.
In particular, the contact set is the union of circular sectors and they are smooth outside of the origin. 

We argue by contradiction, and we assume that $u$ is not a half-space solution, but it is stable. In this case, by Theorem~\ref{thm.main}, the stability condition \eqref{eq.stabilityFin_C} implies
\begin{equation}
\label{eq.contradictionIR0}
C \int_{\partial\Omega}\int_{\partial \Omega} \frac{\left(f(x)-f(y)\right)^2}{|x-y|^2}  d\sigma(x)d\sigma(y) \ge \int_{\partial\Omega} \frac{f(x)^2}{|x|} d\sigma(x)
\end{equation}
for some $C$ that depends on $\Omega$, and for all $f\in C^\infty_c(\partial\Omega\setminus\{0\})$. Let us show that, for an appropriate $f$, \eqref{eq.contradictionIR0} does not hold, thus reaching a contradiction. 

In particular, we choose $f = f_R(x) = \varphi(|x|)\zeta_R(|x|)$ radial with
 \[
\zeta_R(t) = 
\left\{
\begin{array}{ll}
1 & \text{for } 0< t < R\\
(2R-t)R^{-1} & \text{for } R\le t < 2R\\
0 & \text{for } t\ge 2R,\\
\end{array}
\right.
\]
and $\varphi(t)$ any smooth function such that $\varphi \equiv 1$ for $t > 1$ and $\varphi \equiv 0$ for $t \le \frac12$. Thus, $f_R$ is simply a Lipschitz function, that equals $1$ for $|x|\le R$, vanishes for $|x|\ge 2R$, and is linearly (radially) connected in-between. We have also multiplied by $\varphi$ to avoid dealing with the origin, that will not play a role in the computations below. 

Notice that, on the one hand, since $\partial\Omega$ is one-dimensional,
\begin{equation}
\label{eq.Ilinf}
\int_{\partial\Omega\setminus B_{1/2}} \frac{f_R(x)^2}{|x|} \ge \int_{1}^R \frac{dr}{r}= \ln(R)\to \infty,\quad\text{as}\quad R\to \infty.
\end{equation}

On the other hand,  using again that $\partial\Omega$ are rays emanating from the vertex and thus the problem can be reduced to a one dimensional question, we have that
\[
\int_0^{\infty}\int_0^{\infty} \frac{(\zeta_R(t)-\zeta_R(\tau))^2}{(t-\tau)^2}dtd\tau = \int_0^\infty\int_0^\infty \frac{(\s_1(t) - \s_1(\tau))^2}{(t-\tau)^2}\, dt\, d\tau\le C <\infty
\]
for some $C$ depending only on $\Omega$. We have also used here that $\s_R(R t') = \s_1(t')$ and that $\s_1$ is Lipschitz and compactly supported. Combined with \eqref{eq.Ilinf}, this implies that, for $R$ large enough, \eqref{eq.stabilityFin_C} with $f = f_R$ does not hold, thus reaching a contradiction. 
\end{proof}

\section{The stability condition in the extension domain}
\label{sec.5}

We finish this section by proving the stability condition in the extension domain, \eqref{eq.condFinB_E_I}, expressed in Proposition~\ref{prop.main_E} below.

Let us first define the operators ${\lambda}$, $\gamma_0$, and $\gamma_1$ acting on a function $F = F(x, y):\R^{n+1}_+\to \R$ (where $\R^{n+1}_+ = \R^n\times\R_+$) and returning a function on $\partial\Omega$ as follows. Here, $\Omega\subset\R^n$ is a fixed smooth domain (it will be used with $\Omega = \{u  >0 \}$), and $d(x) = {\rm dist}(x, \partial \Omega)$ (the distance in the thin space), where $x\in \R^n$. 

Let $\bar F(x) = F(x, 0)$. Then, we define
\begin{equation}
\label{eq.gamma0}
\gamma_0(F):\partial\Omega\to \R \quad \text{as} \quad \gamma_0(F) (x) = \lim_{t\downarrow 0} \left(\frac{\bar F}{d^{s-1}}\right) (x+t\nu(x)),
\end{equation}
and
\begin{equation}
\label{eq.gamma1}
\gamma_1(F) :\partial\Omega\to \R \quad \text{as} \quad \gamma_1(F) (x) = \lim_{t\downarrow 0} \left[\partial_\nu\left(\frac{\bar F}{d^{s-1}}\right) \right](x+t\nu(x)),
\end{equation}
where $\nu$ is the unit inward normal vector on $\partial\Omega$. On the other hand, we define
\begin{equation}
\label{eq.gamma-1}
{\lambda}(F) :\partial\Omega\to \R \quad \text{as} \quad {\lambda}(F) (x) = \lim_{y\downarrow 0} y^{1+s} L_a F(x, y).
\end{equation}

We sometimes refer to ${\lambda}(F)$, $\gamma_0(F)$, and $\gamma_1(F)$ simply as $L_a F y^{1+s}$, $F/d^{s-1}$ and $\partial_\nu(F/d^{s-1})$ on $\partial\Omega$, respectively.

The stability condition now can be alternatively stated as follows. Here, we use the notation introduced in Lemma~\ref{lem.form2} by denoting $\delta(x)$ the signed distance to $\partial\Omega$ (i.e. $\delta(x)  ={\rm dist}(x, \partial\Omega)$ for $x\in \Omega$, $\delta(x) = -{\rm dist}(x, \partial\Omega)$ for $x\in \Omega^c$), and
\[
\mathcal{U}_s  = (r+\delta)^s,\quad r = (\delta^2 + y^2)^\frac12. 
\]

\begin{prop}
\label{prop.main_E}
Let $u$ be a stable solution for \eqref{eq.energy} in $\R^n$, in the sense \eqref{eq.criticalpoint}-\eqref{eq.stablesolution}. 
Let $\Omega \coloneqq \{u > 0\}\subset \R^n$, and assume that $\partial \Omega$ is $C^{2,\alpha}$. Then, $u$ satisfies
\begin{equation}
\label{eq.condFinB_E}
\begin{split}
\int_{\partial \Omega} \gamma_0(F) \gamma_1(F) & -\frac{\Gamma(2+s)}{s\Lambda} \int_{\partial \Omega} \partial_\nu\left(\frac{u}{d^s}\right) \gamma_0(F)^2 \, \,-2^{s-1}\frac{1-s}{s}\int_{\partial\Omega} \gamma_0(F){\lambda}(F)\leq  \\
& \qquad \qquad\le -\frac{d_s}{\Gamma(1+s)\Gamma(s)} \int_{\Omega} F \partial_y^a F - \frac{d_s}{\Gamma(1+s)\Gamma(s)} \int_{\{y > 0\}} F L_a F,
\end{split}
\end{equation}
for all $F:\overline{\R^{n+1}_+}\to \R$ compactly supported such that $F \equiv 0$ in $\Omega^c\times\{0\}$ and $F/|\nabla \mathcal{U}_s|\in C^{1,\alpha}(\overline{\R^{n+1}_+})$; where $\nu$ is the unit inward normal vector on $\partial\Omega$, and we have used the notation introduced in \eqref{eq.gamma0}-\eqref{eq.gamma1}-\eqref{eq.gamma-1} with $\Omega = \{u > 0\}$, and \eqref{eq.La}-\eqref{eq.FracDer}-\eqref{eq.ds}.
\end{prop}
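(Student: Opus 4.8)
The plan is to deduce Proposition~\ref{prop.main_E} from the nonlocal stability condition already established (Proposition~\ref{prop.main}, in the global form obtained by letting $\mathrm{diam}(B)\to\infty$, exactly as in the proof of Theorem~\ref{thm.cor.1}), by rewriting each term in the extension variable and, at the same time, enlarging the class of admissible test functions.

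First I would fix $F$ as in the statement and set $f\coloneqq\gamma_0(F)$; the hypothesis $F/|\nabla\mathcal{U}_s|\in C^{1,\alpha}(\overline{\R^{n+1}_+})$ together with $|\nabla\mathcal{U}_s|\asymp d^{s-1}$ on $\Omega\times\{0\}$ guarantees that the trace operators $\gamma_0,\gamma_1,\lambda$ are well defined and that $f$ is a legitimate (compactly supported, regular enough) boundary datum for Proposition~\ref{prop.main}. Let $\hat F$ be the $a$-harmonic extension to $\{y>0\}$ of the unique ``large'' Dirichlet solution on $\R^n$ with data $f$ (studied in \cite{Aba15,Gru15}: $\fls(\hat F(\cdot,0))=0$ in $\Omega$, $\hat F(\cdot,0)\equiv0$ in $\Omega^c$, $\hat F(\cdot,0)/d^{s-1}=f$ on $\partial\Omega$, $\hat F(\cdot,0)\to0$ at infinity). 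By definition of $\gamma_0,\gamma_1$, the term $\int_{\partial\Omega}f\,\partial_\nu(\hat F(\cdot,0)/d^{s-1})$ appearing in Proposition~\ref{prop.main} equals $\int_{\partial\Omega}\gamma_0(F)\,\gamma_1(\hat F)$, while $\partial_\nu(u/d^s)$ is unchanged; so, using $\Gamma(2+s)=(1+s)\Gamma(1+s)$, the stability condition of Proposition~\ref{prop.main} reads
\[
\int_{\partial\Omega}\gamma_0(F)\,\gamma_1(\hat F)\,d\sigma\ \le\ \frac{\Gamma(2+s)}{s\Lambda}\int_{\partial\Omega}\partial_\nu\!\Big(\frac{u}{d^s}\Big)\,\gamma_0(F)^2\,d\sigma .
\]

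Since \eqref{eq.condFinB_E} is equivalent to $\Phi(F)\le\frac{\Gamma(2+s)}{s\Lambda}\int_{\partial\Omega}\partial_\nu(u/d^s)\,\gamma_0(F)^2$, where $\Phi(F)$ denotes the quantity
\begin{multline*}
\Phi(F)\coloneqq\int_{\partial\Omega}\gamma_0(F)\gamma_1(F)\,d\sigma-2^{s-1}\tfrac{1-s}{s}\int_{\partial\Omega}\gamma_0(F)\lambda(F)\,d\sigma\\
{}+\tfrac{d_s}{\Gamma(1+s)\Gamma(s)}\Big(\int_\Omega F\,\partial_y^aF+\int_{\{y>0\}}F\,L_aF\Big),
\end{multline*}
it suffices to prove the identity $\Phi(F)=\int_{\partial\Omega}\gamma_0(F)\,\gamma_1(\hat F)\,d\sigma$. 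Equivalently, $\Phi(F)=\Phi(\hat F)$: indeed for $F=\hat F$ one has $L_a\hat F\equiv0$ on $\{y>0\}$, $\partial_y^a\hat F\equiv0$ on $\Omega\times\{0\}$ and $\lambda(\hat F)\equiv0$, so $\Phi(\hat F)=\int_{\partial\Omega}\gamma_0(\hat F)\gamma_1(\hat F)=\int_{\partial\Omega}\gamma_0(F)\gamma_1(\hat F)$. Thus what must be shown is that $\Phi(F)$ depends on $F$ only through $\gamma_0(F)$. To this end I would write $F=\hat F+w$, so that $\gamma_0(w)=0$ (equivalently $w=O(d^s)$ near $\partial\Omega\times\{0\}$), $L_aw=L_aF$ on $\{y>0\}$, $\partial_y^aw=\partial_y^aF$ on $\Omega\times\{0\}$ and $\lambda(w)=\lambda(F)$, expand $\Phi(F)-\Phi(\hat F)$ bilinearly, and check that the remaining mixed and pure-$w$ contributions cancel. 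This is carried out by a weighted Green identity in $\R^{n+1}_+$: one excises the tube $N_\eps\coloneqq\{\mathrm{dist}(\,\cdot\,,\partial\Omega\times\{0\})<\eps\}$, integrates $\mathrm{div}(|y|^aF\nabla F)$ over $\{y>0\}\setminus N_\eps$, and lets $\eps\downarrow0$. The flat face yields $-\int_\Omega F\,\partial_y^aF$ (the part over $\Omega^c\times\{0\}$ vanishing because $F\equiv0$ there), the interior yields $-\int_{\{y>0\}}F\,L_aF$, and the lateral cylinder $\partial N_\eps\cap\{y>0\}$ yields, once the boundary expansion of $F$ is inserted, exactly $\int_{\partial\Omega}\gamma_0(F)\gamma_1(F)\,d\sigma$ together with $2^{s-1}\tfrac{1-s}{s}\int_{\partial\Omega}\gamma_0(F)\lambda(F)\,d\sigma$. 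The constants $2^{s-1}(1-s)/s$ and $d_s/(\Gamma(1+s)\Gamma(s))$ are produced precisely as in Lemma~\ref{lem.form2}: the $d^s$-part of $F$ is modelled on $\mathcal{U}_s=(r+\delta)^s$, its singular part on the $(s-1)$-homogeneous $a$-harmonic profile $\propto(r+\delta)^s/r$ (vanishing on $\Omega^c\times\{0\}$, blowing up like $d^{s-1}$ on $\Omega\times\{0\}$), and one uses the limits \eqref{eq.comp1}--\eqref{eq.comp2} together with the algebraic identities collected in Step~1 of the proof of Lemma~\ref{lem.form2}.

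I expect the main obstacle to be the $\eps\to0$ analysis of this lateral flux. Because $F\sim d^{s-1}$ one has $\int_{\{y>0\}}|y|^a|\nabla F|^2=+\infty$, and both the truncated weighted Dirichlet energy and the flux $\int_{\partial N_\eps}|y|^aF\,\partial_nF$ diverge like $\eps^{-1}$; the identity survives only because these divergences cancel, leaving a finite renormalised quantity. Pinning down the $O(1)$ part correctly --- deciding which coefficients of the $C^{1,\alpha}$-expansion of $F/|\nabla\mathcal{U}_s|$ feed the $\eps^0$ term, and verifying that the mean-curvature corrections combine so that the curvature $H$ drops out and only $\gamma_1(F)$, $\lambda(F)$ and $\partial_\nu(u/d^s)$ survive --- is the delicate computation, together with justifying the absolute convergence of $\int_\Omega F\,\partial_y^aF$ and $\int_{\{y>0\}}F\,L_aF$ near $\partial\Omega$, which again relies on $F/|\nabla\mathcal{U}_s|\in C^{1,\alpha}(\overline{\R^{n+1}_+})$ and the identities of Lemma~\ref{lem.form2}.
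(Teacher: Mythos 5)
Your overall strategy mirrors the paper's: reduce to the nonlocal stability inequality of Proposition~\ref{prop.main} with boundary datum $f=\gamma_0(F)$, account for the extension-variable terms via a weighted Green identity with a tube excision around $\partial\Omega\times\{0\}$, and extract the constants $2^{s-1}(1-s)/s$ and $d_s/(\Gamma(1+s)\Gamma(s))$ from the model profiles of Lemma~\ref{lem.form2}. However, there is a genuine gap in the central reduction: the claimed identity $\Phi(F)=\Phi(\hat F)$ is false. When you write $F=\hat F+w$ and expand $\Phi$ bilinearly, the pure-$w$ contribution does not cancel. Since $\gamma_0(w)=0$, the two boundary integrals in $\Phi(w)$ vanish, and what remains is
\[
\Phi(w)=\frac{d_s}{\Gamma(1+s)\Gamma(s)}\Big(\int_{\Omega} w\,\partial_y^a w+\int_{\{y>0\}} w\,L_a w\Big)=-\frac{d_s}{\Gamma(1+s)\Gamma(s)}\int_{\{y>0\}}|\nabla w|^2\,|y|^a\le 0,
\]
which is strictly negative unless $w\equiv 0$. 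More precisely, tracking all terms one finds
\[
\Phi(F)-\Phi(\hat F)=-\frac{d_s}{\Gamma(1+s)\Gamma(s)}\int_{\{y>0\}}|\nabla(F-F_2)|^2|y|^a-\frac{1}{\Gamma(1+s)\Gamma(s)}\int_\Omega F_1\,(-\Delta)^s F_1,
\]
where $F_2$ is the $a$-harmonic extension of the trace $F(\cdot,0)$ and $F_1$ is the regular part of that trace (vanishing $\gamma_0$-datum). Both summands are nonpositive; neither is zero in general.

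So "check that the remaining mixed and pure-$w$ contributions cancel" is not a step that will go through, and if you attempt it you will get stuck. Fortunately the sign is favorable: what you actually need, and what the paper establishes, is the one-sided bound $\Phi(F)\le\Phi(\hat F)=\int_{\partial\Omega}\gamma_0(F)\gamma_1(\hat F)$, after which Proposition~\ref{prop.main} finishes the argument. Concretely, the paper splits $\tilde F=F(\cdot,0)=F_0+F_1$ with $F_0$ the large $s$-harmonic solution ($\gamma_0(F_0)=\gamma_0(\tilde F)$, $\gamma_0(F_1)=0$), uses the integration-by-parts identity of Lemma~\ref{lem.IBP} to write $\int_{\partial\Omega}\gamma_0(F_0)\gamma_1(F_1)=\frac{1}{\Gamma(1+s)\Gamma(s)}\int_\Omega F_0(-\Delta)^sF_1$, and then \emph{discards} the nonnegative quantities $\int_\Omega F_1(-\Delta)^sF_1$ and $\int_{\{y>0\}}|\nabla(F-F_2)|^2|y|^a$ to pass from the large solution $\hat F$ to the general test function $F$. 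You should therefore replace the claimed identity by this inequality and verify explicitly that the two discarded quantities have the correct sign; the rest of your plan (the tube-excision Green identity, the blow-up profiles $\mathcal{U}_s$ and $\mathcal{U}_s/r$, the renormalisation of the divergent lateral flux) is indeed what is needed to produce the stated constants.
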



The condition that $F/|\nabla \mathcal{U}_s|$ is $C^{1,\alpha}$ is natural in order to make sense of ${\lambda}$, $\gamma_0(F)$, and $\gamma_1(F)$; and moreover it is the one satisfied by functions $F$ behaving like \emph{large}-solutions to the Dirichlet problem for the fractional Laplacian (as we will see in Proposition~\ref{prop.stab_E}).


\begin{rem}
In the context of Proposition~\ref{prop.main}, if we extend $F$ to be $L_a$-harmonic in $\{y>0\}$, then  we have that $\gamma_0(F) = f$ and the right-hand side of \eqref{eq.condFinB} is actually $-\int_{\partial\Omega}\gamma_0(F) \gamma_1(F)$. 
\end{rem}

Finally, before proving Proposition~\ref{prop.main_E}, let us state the following well-known integration by parts involving \emph{large} and \emph{standard} solutions.

\begin{lem}
\label{lem.IBP}
Let $\Omega \subset \R^n$ be a $C^2$ domain. Let $u$, $v$ be such that $u = v\equiv 0$ in~$\Omega^c$. 
Assume that $v$ is a \emph{large} solution (namely, $v/d^{s-1}\in C^0(\overline{\Omega})$ ) and let $u$ be a \emph{standard} solution (namely, $u\in C^0({\R^n})$). Then, 
\[
\int_{\partial\Omega} \frac{v}{d^{s-1}}\cdot \frac{u}{d^s} = \frac{1}{\Gamma(1+s)\Gamma(s)}\int_\Omega\big\{v \fls u - u \fls v\big\},
\]
where $d = {\rm dist}(x, \partial\Omega)$.
\end{lem}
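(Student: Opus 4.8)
## Proof Plan for Lemma~\ref{lem.IBP}

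\textbf{Setup and strategy.} The plan is to prove the identity by a regularized integration-by-parts argument, working at distance $\eps$ from $\partial\Omega$ and then letting $\eps\downarrow 0$. The point is that $v$ blows up like $d^{s-1}$ while $u$ vanishes like $d^s$, so the product $uv$ behaves like $d^{2s-1}$, which is integrable but not obviously harmless under integration by parts; one must track precisely the boundary contribution that survives in the limit. Set $\Omega_\eps \coloneqq \{x\in\Omega: d(x) > \eps\}$, which is a $C^2$ domain for $\eps$ small since $\Omega$ is $C^2$, with inward unit normal (with respect to $\Omega_\eps$) agreeing with $\nu\circ\pi_{\partial\Omega}$ to first order. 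Recall that both $u$ and $v$ are $s$-harmonic in $\Omega$ (this is what it means to be a ``standard'' resp.\ ``large'' solution with the stated boundary behavior), so $v\fls u - u\fls v = 0$ pointwise in $\Omega$, and the claimed identity is really an identity between a boundary integral and the (convergent) improper integral $\int_\Omega (v\fls u - u\fls v)$ understood as a principal value / limit of integrals over $\Omega_\eps$.

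\textbf{Main steps.} First I would recall the pointwise asymptotics near a boundary point $z\in\partial\Omega$: writing $x = z + t\nu(z)$ for small $t>0$, one has $u(x) = U_0(z)\,t^s + o(t^s)$ with $U_0 = u/d^s$ on $\partial\Omega$, and $v(x) = V(z)\,t^{s-1} + o(t^{s-1})$ with $V = v/d^{s-1}$ on $\partial\Omega$; these follow from the boundary regularity theory already invoked in the excerpt (\cite{RS17} for $u$, and \cite{Aba15,Gru15,AR20} for the large solution $v$). The second step is the Green-type identity on $\Omega_\eps$: for two functions vanishing outside $\Omega$ one has, by the nonlocal integration-by-parts formula (the one used in Lemma~\ref{lem.form1}, specialized appropriately),
\[
\int_{\Omega_\eps}\big(v\,\fls u - u\,\fls v\big) = \text{(boundary term on }\{d=\eps\}\text{)} + o(1),
\]
where the boundary term is, to leading order, an integral over $\partial\Omega$ of an explicit pairing of the traces $U_0$ and $V$ times a universal constant coming from the one-dimensional computation. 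Concretely, the boundary term arises from the mismatch, across $\{d=\eps\}$, of the nonlocal fluxes; computing it reduces — exactly as in the proof of Proposition~\ref{prop.crs} and Lemma~\ref{lem.constants} — to a one-dimensional Beta-function integral. The third step is to evaluate that constant: one expects the model computation with $u_0 = (x_1)_+^s$, $v_0 = (x_1)_+^{s-1}$ to produce, after the change of variables and the reflection formula for $\Gamma$, precisely the factor $\Gamma(1+s)\Gamma(s)$ (equivalently $\Gamma(1+s)\Gamma(s) = \pi s/\sin(\pi s)\cdot$const, matching $c_{1,s}$-type normalizations), so that the limiting boundary term equals $\Gamma(1+s)\Gamma(s)\int_{\partial\Omega} V\,U_0\,d\sigma$. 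Dividing through gives the stated formula. Finally I would remark that the left-hand side $\int_{\partial\Omega}(v/d^{s-1})(u/d^s)\,d\sigma$ converges absolutely since the traces are continuous and $\partial\Omega$ is $C^2$, and that the right-hand side is the (conditionally convergent, but here actually vanishing integrand) integral $\int_\Omega\{v\fls u - u\fls v\}$, so no ambiguity remains.

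\textbf{Main obstacle.} The delicate point is the rigorous justification of the limit of the boundary term on $\{d = \eps\}$: one must show that the contributions of the error terms $o(t^s)$ and $o(t^{s-1})$ in the expansions of $u$ and $v$, as well as the curvature corrections to the surface measure and to the distance function on $\{d=\eps\}$, all vanish as $\eps\downarrow 0$, uniformly enough to pass to the limit. This requires the pointwise expansions to hold with a uniform modulus along $\partial\Omega$ (on the compact support of the relevant quantities, or using the $C^2$ regularity of $\Omega$ globally if $\Omega$ is bounded), together with the fact that the singular kernel $|x-y|^{-n-2s}$ integrated against the difference of the two ``slabs'' produces a convergent integral precisely because $s + (s-1) = 2s-1 > -1$. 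Once this bookkeeping is done, identifying the constant as $\Gamma(1+s)\Gamma(s)$ is a routine Gamma-function manipulation identical in spirit to the one carried out in the proof of Lemma~\ref{lem.constants}. I would also note that this lemma is stated as ``well-known'' and appears in \cite{Aba15,Gru15}, so a short proof along these lines, or a direct citation, suffices.
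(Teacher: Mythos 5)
Your proposal contains a genuine misreading of the lemma that makes your whole framing collapse. You assert that ``both $u$ and $v$ are $s$-harmonic in $\Omega$ (this is what it means to be a `standard' resp.\ `large' solution)'' and hence that $v\fls u - u\fls v = 0$ pointwise, so that the right-hand side is a ``conditionally convergent, but here actually vanishing'' integral. This is not what the lemma says: the parenthetical clauses in the statement define ``large'' and ``standard'' purely in terms of boundary behaviour ($v/d^{s-1}\in C^0(\overline\Omega)$ and $u\in C^0(\R^n)$ respectively), with no assumption that either function is $s$-harmonic. The identity is a genuine nonlocal Green's formula relating a boundary pairing to a nonzero bulk integral; indeed, in its only use in this paper (Step 1 of the proof of Proposition~\ref{prop.main_E}) it is applied with $v=F_0$ $s$-harmonic but $u=F_1$ satisfying $\fls F_1 = \fls\tilde F\neq 0$, precisely so that $\int_\Omega v\fls u$ survives. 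If your reading were correct the lemma would say $0=0$ and would be useless for that application.

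There is a second, more technical gap: your ``second step'' invokes a Green-type identity on the truncated domain $\Omega_\eps = \{d>\eps\}$ producing a boundary term ``on $\{d=\eps\}$.'' For a local operator this is standard, but $\fls$ is nonlocal, and integrating $v\fls u - u\fls v$ over $\Omega_\eps$ does not produce a surface integral over $\{d=\eps\}$ by any pointwise flux argument; the operator sees all of $\R^n$. The cleanest rigorous route is to pass to the Caffarelli--Silvestre extension, apply the genuine (local) Green identity for $L_a$ in $\{y>0\}$ minus a small tubular neighbourhood of $\partial\Omega\times\{0\}$, and identify the surviving contribution on the small cylinder as the Beta-function integral you describe --- this is in fact exactly the computation carried out for $I_\eps$ in the proof of Proposition~\ref{prop.main_E}. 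Your sketch does not include this mechanism, and without it the appearance of a boundary term on $\{d=\eps\}$ is unjustified. Finally, for what it's worth, the paper does not prove this lemma by computation at all: it simply cites \cite[Proposition 1.2.2]{Aba15} together with \cite[Lemma B.1]{CGV20} (and Grubb's works for the smooth case), with a one-line density argument to pass from $\fls u\in C^\infty_c(\Omega)$ to $\fls u$ integrable. Your closing remark that a citation suffices is the one part of your plan aligned with the paper's actual proof.
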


\begin{proof}
When $(-\Delta)^s u\in C^\infty_c(\Omega)$, this integration by parts formula corresponds to \cite[Proposition 1.2.2]{Aba15} combined with \cite[Lemma B.1]{CGV20}.
(In $C^\infty$ domains it was first proved in \cite[Corollary 4.5]{Gru18}; see also \cite{Gru19}.)
By approximation, we can consider arbitrarily $\fls u$ integrable.
\end{proof}

We can now prove the stability condition in the extended variable.

\begin{proof}[Proof of Proposition~\ref{prop.main_E}]
We divide the proof into two steps.
\\[0.1cm]
{\bf Step 1.} Let us denote $\tilde F(x) = F(x, 0)$ and let us split
\[
\tilde F = F_0 + F_1,
\]
where $F_0 = F_1 = 0$ in $\Omega^c$, and 
\[
\left\{
\begin{array}{rcll}
\fls F_0 & = & 0 & \quad\textrm{in } \Omega\\
\gamma_0(F_0) & = & \gamma_0(\tilde F) & \quad\textrm{on } \partial\Omega,
\end{array}
\right.
\qquad 
\left\{
\begin{array}{rcll}
\fls F_1 & = & \fls \tilde F & \quad\textrm{in } \Omega\\
\gamma_0(F_1) & = & 0 & \quad\textrm{on } \partial\Omega,
\end{array}
\right.
\]
where we notice that from the condition on $F$, $\gamma_0(\tilde F)$ is well defined. In particular, $F_0$ is a large-solution, whereas $F_1$ is a standard-solution. 

From Proposition~\ref{prop.main} we have that
\[
-(1+s)\frac{\Gamma(1+s)}{s\Lambda} \int_{\partial \Omega} \partial_\nu\left(\frac{u}{d^s}\right) (\gamma_0(F_0))^2 \, d\sigma \le -\int_{\partial \Omega} \gamma_0(F_0) \gamma_1(F_0)d\sigma,
\]
where by approximation we are using that it is enough to assume $\gamma_0(F_0) \in C^1_c(\partial \Omega\cap B)$.
(Notice that we are also taking a global solution by letting ${\rm diam(B)}\to \infty$ in Proposition~\ref{prop.main}.) Take now $F_1$ such that $\gamma_0(F_1) \equiv 0$ (so it is not a \emph{large}-type solution), and suppose $F_1 \equiv 0$ in $\Omega^c$. 

Notice that, if we denote $C_{s,\Lambda} \coloneqq (1+s)\frac{\Gamma(1+s)}{s\Lambda}$, then from the previous inequality we obtain
\begin{equation}
\label{eq.follows1}
\int_{\partial\Omega}\gamma_0(\tilde F) \gamma_1(\tilde F) - C_{s,\Lambda} \int_{\partial\Omega}\partial_\nu\left(\frac{u}{d^s}\right) (\gamma_0(\tilde F))^2\le \int_{\partial\Omega}\gamma_0(F_0) \gamma_1(F_1). 
\end{equation}

We also used here that $\gamma_0(F_1) = 0$. In particular, we have $\gamma_1(F_1) = \frac{F_1}{d^s}$, and we can use Lemma~\ref{lem.form2} to get that
\begin{equation}
\label{eq.follows2}
\int_{\partial\Omega}\gamma_0(F_0) \gamma_1(F_1) = \frac{1}{\Gamma(1+s)\Gamma(s)}\int_{\Omega} F_0 \fls F_1\le \frac{1}{\Gamma(1+s)\Gamma(s)}\int_{\Omega} \tilde F \fls \tilde F,
\end{equation}
where in the last inequality we have used that $F_0$ is $s$-harmonic in $\Omega$ and $\int_\Omega F_1\fls F_1 \ge 0$ (which holds for all functions).


Finally, we want to deal with the last term, $\int_\Omega \tilde F\fls \tilde F$. Let us consider $F_2$ to be the $a$-harmonic extension of $\tilde F$ to $\{y > 0\}$. Namely, 
\[
\left\{
\begin{array}{rcll}
L_a F_2 & = & 0 & ~~\textrm{in } \{y > 0\}\\
F_2 & = & F & ~~\textrm{on } \{y = 0\},
\end{array}\quad \text{so}\quad
\right. 
\int_\Omega \tilde F \fls \tilde F= -d_s \lim_{\eps\downarrow 0}\int_{\Omega\cap \{\delta > \eps\}}F_2 \partial_y^a F_2.
\]
(Recall \eqref{eq.La}-\eqref{eq.FracDer}-\eqref{eq.ds}.) We have denoted here $\partial_y^a v = \lim_{y \downarrow 0} y^a \partial_y v(x, y)$, and from now on we use the notation from Lemma~\ref{lem.constants}. 

Let us  do some manipulations. We use the following Green's identity:
\[
\int_{D} (gL_a f -fL_a g) = \int_{\partial D} (-f\partial_{\vec{n}}  g y^a+g\partial_{\vec{n}} f y^a),
\]
for all pairs of functions $f$ and $g$ such that each of the previous terms is well-defined, and where ${\vec{n}}$ denotes the outward normal to corresponding domain. Then we have, if we denote $\Omega_\eps = \{\delta > \eps\}$ and $A^+ := A\cap  \{y > 0\}$ for any set $A \subset \R^{n+1}$,
\begin{align*}
\int_{\Omega_\eps} F_2 \partial_y^a F_2 = -\int_{\partial(\{y \ge 0\}\setminus \{r \le \eps\})} F \partial_{\vec{n}} F_2 y^a +\int_{(\partial\{r \ge \eps\})^+} F \partial_{\vec{n}} F_2 y^a.
\end{align*}
For the first term, and using the Green identity above, we have
\begin{align*}
\int_{\partial(\{y \ge 0\}\setminus \{r \le \eps\})} F \partial_{\vec{n}}F_2 y^a & = \int_{\partial(\{y \ge 0\}\setminus\{r \le \eps\})} \partial_{\vec{n}} F F_2 y^a - \int_{\{r\ge \eps\}^+} L_a F F_2\\
&   = -\int_{\Omega_\eps} \partial_y^a  F F+\int_{(\partial\{r \ge \eps\})^+} \partial_{\vec{n}} F F_2 y^a - \int_{\{r\ge \eps\}^+} L_a F F_2.
\end{align*}
If we denote 
\[
I_\eps := \int_{(\partial\{r \ge \eps\})^+} (F-F_2) \partial_{\vec{n}}F_2 y^a-\int_{(\partial\{r \ge \eps\})^+} \partial_{\vec{n}} (F-F_2) F_2 y^a
\]
we then have
\[
\int_{\Omega_\eps} F_2 \partial_y^a F_2 -\int_{\{r\ge \eps\}^+} FL_a F = I_\eps+\int_{\Omega_\eps}\partial_y^a F F -\int_{\{r\ge\eps\}^+} (F-F_2)L_a(F-F_2).
\]
Letting $\eps \downarrow 0$ and using that $\int_{\{y \ge 0\}} (F-F_2)L_a(F-F_2)\le 0$ (where the boundary term vanishes by scaling and because $F = F_2$ on $\{y = 0\}$) we obtain 
\begin{equation}
\label{eq.follows3}
\int_\Omega \tilde F \fls \tilde F \le -d_s\left(\int_\Omega F \partial_y^a F + \int_{\{y > 0\}} F L_a F+\lim_{\eps\downarrow 0} I_\eps \right),
\end{equation}
where it remains to be computed the explicit value of $\lim_{\eps \downarrow 0} I_\eps$ in terms of $F$. 
\\[0.1cm]
{\bf Step 2.}
 In order to do that, we use expansions of $F$ and $F_2$ in the spirit of those in Lemma~\ref{lem.form2} (from where we take the notation, as well). In this case, the role of the first order expansion $\frac{\mathcal{U}_s}{2^s}$ is played by $\frac{\mathcal{U}_s}{2^s r}$; while the $a$-harmonic function $\mathcal{V}_s$ now is $\tilde{\mathcal{V}}_s$ defined as
\[
\tilde{\mathcal{V}}_s := \frac{\mathcal{U}_s}{r} - \frac{H}{2} \frac{\mathcal{U}_s}{r} (\delta - r),\qquad L_a \tilde{\mathcal{V}}_s = - \frac{H\mathcal{U}_s y^a}{r^3} \left( s\delta r + (1-s) r^2 - \delta^2\right) = O(r^{-s}).
\] 
If we assume that $0\in \partial\Omega$, we can expand $F$ around a free boundary point as 
\[
F = h \frac{\mathcal{U}_s}{2^{s}r}+\dots = h_0 \tilde{\mathcal{V}}_s + \frac{\mathcal{U}_s}{2^s r}\left(h_1 (\delta - r) + h_2 r + A'\cdot x_\tau\right)+\dots,
\]
for some function $h\in C^{1,\alpha}$, and where $x_\tau$ denotes the directions tangent to $\partial\Omega$ (or perpendicular to the unit outward normal to $\partial \Omega$ on the thin space, $\nu$). 

In this way, the harmonic extension of $F|_{\{y = 0\}}$ towards $\{y \ge 0\}$ is  
\[
F_2 = h_0 \tilde{\mathcal{V}}_s + \frac{\mathcal{U}_s}{2^s r}\left(h_2 r + A'\cdot x_\tau\right)+\dots\quad\text{so that}\quad F - F_2 =  h_1 \frac{\mathcal{U}_s}{2^s r}(\delta -r)+\dots.
\]

Thus, in the definition of $I_\eps$ we can change variables and decompose the integral on $\{r = 0\}^+$ as an integral for $x\in \partial \Omega$ times an integral on $x+\eps(\nu\cos\theta + \hat{y}\sin\theta)$ for $\theta\in (0, \pi)$, and where $\hat y = \be_{n+1} = (0,\dots,0,1)\in \R^{n+1}$. Doing so, and plugging the previous functions on $I_\eps$, we obtain 
\[
\lim_{\eps\downarrow 0} I_\eps = 2^{-2s}\int_{\partial \Omega} h_0(x) h_1(x) \int_0^\pi (1+\cos(\theta))^{2s} (\cos\theta - 1)(\sin\theta)^{1-2s}\, d\theta\, d\sigma(x),
\]
where $h_0$ and $h_1$ are now functions corresponding to the respective coefficient at each boundary point $x\in \partial\Omega$. 
We obtain this result by observing that $\delta = \eps\cos\theta$ on $\{r\ge\eps\}^+$, $r = \eps$, $\partial_{\vec{n}}\delta = -\cos\theta$, and $\partial_{\vec{n}}r = -1$.  We now compute the innermost integral (using Mathematica 11.2 to do this computation)
\[
 \int_0^\pi (1+\cos(\theta))^{2s} (\cos\theta - 1)(\sin\theta)^{1-2s}\, d\theta = -\frac{2\pi s(1-s)}{\sin(\pi s)}
\]
to get
\begin{equation}
\label{eq.follows4}
\lim_{\eps\downarrow 0} I_\eps = -2^{-2s+1}\frac{\pi s(1-s)}{\sin(\pi s)} \int_{\partial\Omega} h_0(x) h_1(x).
\end{equation}
To finish, observe that $h_0(x) = \gamma_0(F)$, and that at first order 
\[
L_a F = h_1 L_a \left(\frac{\mathcal{U}_s}{2^sr} (\delta - r)\right)  = h_1 2 \frac{\mathcal{U}_s y^a}{2^sr^3}(rs - \delta)+\dots
\]
so that $h_1 = \frac{2^{1-s}r^3}{\mathcal{U}_s y^a(rs-\delta)}L_a F$ on $\partial\Omega$, understood as a limit. In particular, recalling the definition \eqref{eq.gamma-1}, 
$h_1 = s^{-1}2^{s-1} {\lambda}(F)$, and so the result follows joining \eqref{eq.follows1}-\eqref{eq.follows2}-\eqref{eq.follows3}-\eqref{eq.follows4}.
%
\end{proof}

\section{Axially symmetric stable cones}
\label{sec.6}
Let us use the stability condition to show that, at least in low dimensions, axially symmetric homogeneous (thus conical) solutions are either unstable or one-dimensional. Let us also fix 
\[
\Lambda = \Gamma(1+s)
\]
from now on, so that the fractional normal derivative at free boundary points is fixed to be 1. 

Namely, let us suppose that we have a (global) solution $u$ that is axially symmetric and $C^s$. That is, 
\begin{equation}
\label{eq.axsim}
u = u(x_1, x_2, \dots, x_n) = u( \s, \tau)\quad\text{where}\quad \s \coloneqq \sqrt{x_1^2+\dots+x_{n-1}^2}~\text{and}~\tau = x_n,
\end{equation}
 and it satisfies \eqref{eq.OPT} outside of the origin. We denote $\Omega = \{u > 0\}$.
 
 Let $\bar u:\R^{n}\times\R_+$ be the $a$-harmonic extension of $u$. Namely (recall \eqref{eq.La}.), 
\begin{equation}
\label{eq.aext}
 \bar u(x, 0) = u(x)\quad\text{for}~~x\in \R^n,\qquad\text{and}\qquad L_a \bar u = 0\quad\text{in}\quad \{y > 0\}. 
\end{equation}

Let us denote by 
\[u_\s \coloneqq \hat{\s}\cdot\nabla u\]
the derivative along the direction $\s$, so that $\hat{\s} \coloneqq \frac{\s}{|\s|}$, similarly we denote $\bar u_\s$. Then, the following stability condition holds for $u_\s$.

\begin{prop}
\label{prop.stab_E}
Let $u\in C^s(\R^n)$ be a global minimizer to \eqref{eq.energy}, in the sense \eqref{eq.energy2}-\eqref{eq.loc_min} for all $B\subset\R^n$; or a stable critical point to \eqref{eq.energy}, in the sense \eqref{eq.criticalpoint}-\eqref{eq.stablesolution}. Assume $u$ is that $\Omega \coloneqq \{u > 0\}$ is a $C^{2,\alpha}$ domain outside the origin.

Let us also assume that $u$ is axially symmetric (see \eqref{eq.axsim}). If we denote by $\bar u$ its $a$-harmonic extension, \eqref{eq.aext}, then we have that 
\begin{equation}
\label{eq.stab_E}
\int_{\{y > 0\}}  \bar u_\s^2 |\nabla \eta|^2y^a 
\ge (n-2) \int_{\{y > 0\}}\bar u_\s^2 \eta^2 \s^{-2} y^a 
\end{equation}
for all $\eta\in C^\infty_c(\overline{\R^{n+1}})$ such that $\partial_y \eta = 0$ on $\{y = 0\}$. 
\end{prop}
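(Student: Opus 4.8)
The strategy is to plug the test function $F = \eta\, \bar u_\s$ into the stability condition in the extension domain, Proposition~\ref{prop.main_E}, and to check that all the free-boundary terms on the left-hand side either vanish or combine with the curvature term to produce nothing — so that only the bulk Dirichlet-type terms on the right survive, giving \eqref{eq.stab_E} after sign bookkeeping. First I would verify that $F = \eta\,\bar u_\s$ is an admissible competitor: since $u$ is $s$-homogeneous with $u/d^s\equiv 1$ on $\partial\Omega$, the expansion $u = d^s + U_1 d^{1+s} + \cdots$ from Lemma~\ref{lem.form2} shows that $\bar u_\s = \hat\s\cdot\nabla \bar u$ behaves like a \emph{large} solution near $\partial\Omega$ (it has a $d^{s-1}$-type blow-up coming from differentiating $d^s$, with $\gamma_0(\bar u_\s) = s\,(\hat\s\cdot\nu)$), so $F/|\nabla \mathcal U_s| \in C^{1,\alpha}$ up to the boundary once $\eta$ is smooth with $\partial_y\eta = 0$ on $\{y=0\}$; the compact support in $\overline{\R^{n+1}}$ handles the far field and the origin. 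This is exactly the class of test functions for which \eqref{eq.condFinB_E} was stated.

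**Key steps.** (i) Compute each term of \eqref{eq.condFinB_E} with this $F$. The term $-\frac{d_s}{\Gamma(1+s)\Gamma(s)}\int_\Omega F\,\partial_y^a F$: since $\bar u$ is $a$-harmonic in $\{y>0\}$, so is $\bar u_\s$ up to the lower-order term coming from the fact that $\partial_\s$ and $L_a$ do not quite commute in cylindrical coordinates — this is the source of the $\s^{-2}$ weight, and it is precisely the computation done (in the local, $s=1$ case) in \cite{FR19} and (for $(-\Delta)^s$) in \cite{San18}. Concretely, writing $L_a$ in $(\s,\tau,y)$ coordinates, $L_a(\bar u_\s) = -(n-2)\,\s^{-2}\,\bar u_\s\, y^a$ in $\{y>0\}$ (the factor $n-2$ is the number of suppressed angular variables), and $L_a(\eta \bar u_\s) = \eta L_a \bar u_\s + \bar u_\s L_a\eta + 2 y^a\nabla\eta\cdot\nabla\bar u_\s$. (ii) Use the integration-by-parts/Green identity already invoked in the proof of Proposition~\ref{prop.main_E} to rewrite $\int_{\{y>0\}} F L_a F + \int_\Omega F\partial_y^a F$ as $-\int_{\{y>0\}}|\nabla F|^2 y^a + (\text{boundary on }\partial\Omega)$, and then expand $|\nabla F|^2 = |\nabla\eta|^2 \bar u_\s^2 + \eta^2|\nabla\bar u_\s|^2 + \frac12\nabla(\eta^2)\cdot\nabla(\bar u_\s^2)$, integrating the cross term by parts once more so it pairs with $\eta^2 L_a\bar u_\s$ to collapse everything (via the Rellich–Pohozaev–type cancellation of \cite{CC04,CR13,CFRS20}) to $\int|\nabla\eta|^2\bar u_\s^2 y^a - (n-2)\int \eta^2 \bar u_\s^2 \s^{-2} y^a$ plus boundary terms on $\partial\Omega$. (iii) Show that those $\partial\Omega$-boundary terms exactly match the left-hand side of \eqref{eq.condFinB_E} — i.e. that $\gamma_0(F),\gamma_1(F),\lambda(F)$ evaluated on $F=\eta\bar u_\s$ satisfy
\[
\int_{\partial\Omega}\gamma_0(F)\gamma_1(F) - \frac{\Gamma(2+s)}{s\Lambda}\int_{\partial\Omega}\partial_\nu\!\Big(\tfrac{u}{d^s}\Big)\gamma_0(F)^2 - 2^{s-1}\tfrac{1-s}{s}\int_{\partial\Omega}\gamma_0(F)\lambda(F) = (\text{the }\partial\Omega\text{ terms from (ii)}),
\]
using $\gamma_0(F) = s\eta(\hat\s\cdot\nu)$, the formula $U_1 = \partial_\nu(u/d^s)$, and the relation $\lambda(F) \sim H\gamma_0(F)$-type term from the $\tilde{\mathcal V}_s$ expansion in Proposition~\ref{prop.main_E}; the net effect is that the curvature contributions cancel against $\lambda(F)$ and against the $U_1$ term (this is the place where axial symmetry is used: $\partial_\s u$ is a derivative of $u$, hence automatically an $s$-harmonic large solution with a \emph{clean} boundary profile, so no leftover ``source'' term survives). (iv) Collect signs: after multiplying through by the positive constant $\frac{d_s}{\Gamma(1+s)\Gamma(s)}$, the inequality $\text{LHS}\le\text{RHS}$ of \eqref{eq.condFinB_E} becomes precisely \eqref{eq.stab_E}.

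**Main obstacle.** The delicate point is step (iii): tracking all the boundary terms on $\partial\Omega$ generated by the Green identities in step (ii) and proving they conspire to cancel the $H$- and $U_1$-terms on the left of \eqref{eq.condFinB_E}. This requires the second-order boundary expansions of $\bar u$ and of $\bar u_\s$ (the $\mathcal U_s$, $\mathcal V_s$, $\tilde{\mathcal V}_s$ machinery of Lemmas~\ref{lem.form2} and the proof of Proposition~\ref{prop.main_E}) and a careful check that differentiating $u$ in the $\s$-direction produces a large solution with $\gamma_1$ and $\lambda$ matching exactly what the stability functional expects. A secondary subtlety is the behaviour at $\s = 0$ (the axis) and at the origin: one must check $\bar u_\s$ vanishes fast enough as $\s\to 0$ that $\int \bar u_\s^2\,\s^{-2}\,y^a$ is finite against the cutoff $\eta$, and that the cone tip contributes nothing — both follow from the regularity of $u$ and a standard capacity/cutoff argument, so I would state them briefly rather than belabour them.
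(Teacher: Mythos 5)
Your overall route coincides with the paper's: take $F=\eta\,\bar u_\s$ in Proposition~\ref{prop.main_E}, use the boundary expansion of $\bar u$ and $\bar u_\s$ to compute the three boundary terms, integrate $\int FL_aF$ by parts, and show that the $\partial\Omega$-contributions from the two sides cancel identically so that only the Dirichlet-type bulk integrals remain. That is exactly the structure of the paper's proof. However, three points need attention.

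First, a sign: differentiating $L_a\bar u=0$ in the $\s$-direction in cylindrical coordinates gives
\[
L_a\bar u_\s \;=\; \frac{n-2}{\s^2}\,\bar u_\s\, y^a,
\]
with a plus sign, not a minus as you wrote (eq.~\eqref{eq.desiredresult} in the paper). Your subsequent claimed formula $\int|\nabla\eta|^2\bar u_\s^2 y^a - (n-2)\int\eta^2\bar u_\s^2\s^{-2}y^a$ is the correct outcome of the integration by parts, so this appears to be an isolated slip; but with your stated sign of $L_a\bar u_\s$ that formula does not follow, and the resulting inequality would be trivially true with no content.

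Second, you announce the term $-\frac{d_s}{\Gamma(1+s)\Gamma(s)}\int_\Omega F\,\partial_y^a F$ but never dispose of it. In the paper this term vanishes outright: with $\partial_y^a\eta\equiv 0$ on $\{y=0\}$ one has $\partial_y^a(\eta\bar u_\s)=\eta\,\partial_y^a\bar u_\s$, and $\partial_y^a\bar u_\s = (\partial_y^a\bar u)_\s\propto ((-\Delta)^s u)_\s\equiv 0$ on $\Omega$; so $\int_\Omega F\,\partial_y^a F=0$. Without this observation the argument does not close.

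Third — and this is the substantive gap — your step~(iii) is where the entire difficulty of the proof lives, and you only assert that it works. The singular integration by parts cannot be done directly: one must truncate to $E_{>\eps}=\{{\rm dist}(\cdot,\partial\Omega)>\eps\}$, integrate by parts there, and take $\eps\downarrow 0$. The surviving surface term on $E_\eps$ is not negligible; using the $\mathcal U_s$-expansion it converges to
\[
-2^{-2s}\,\frac{2\pi s^4}{\sin(\pi s)}\int_{\partial\Omega}\eta\,\partial_\nu\eta\,(\hat\s\cdot\nu)^2,
\]
via an explicit $\theta$-integral $\int_0^\pi\cos\theta(1+\cos\theta)^{2s}\sin^{1-2s}\theta\,d\theta=\frac{2\pi s^2}{\sin(\pi s)}$. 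On the other side, using \eqref{eq.relHu} the $U_1$-pieces of $\gamma_0(F)\gamma_1(F)$ cancel against the $\partial_\nu(u/d^s)$ term, leaving the $\partial_\nu\eta$-part of $\gamma_1(F)$ plus the $\lambda(F)$ term, which combine to $s^3\int_{\partial\Omega}\eta\,\partial_\nu\eta\,(\hat\s\cdot\nu)^2$. The cancellation then hinges on the precise constant identity $s^3=\frac{d_s}{\Gamma(1+s)\Gamma(s)}\,2^{-2s}\,\frac{2\pi s^4}{\sin(\pi s)}$. None of this is routine, and calling it ``a standard capacity/cutoff argument'' or a Rellich–Pohozaev cancellation undersells what has to be verified; as written, your proposal identifies the right place to look but does not demonstrate that the cancellation actually occurs.
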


\begin{proof}
By multiplying our solution by a constant, we assume without loss of generality that $\Lambda = \Gamma(1+s)$. Let us start by noting that, since $u$ solves the (fractional) one-phase problem and by Lemma~\ref{lem.form2}, we have that the expansion of $u$ around free boundary points is
\[
u(x) = d^s(x) + A(x) d^{1+s} + \dots\qquad\text{for $x\in \{u > 0\}$,}
\]
where we recall that $d(x) = {\rm dist}(x, \{u = 0\})$, and we have defined $A(x) = \partial_\nu\left(u/d^{s}\right)$ (from the notation in Lemma~\ref{lem.form2}, the tangential direction of $\nabla \eta$ along the free boundary vanishes, since $\eta$ is constant there). We can similarly compute an expansion of $u_\s$ as 
\begin{equation}
\label{eq.notsimply}
u_\s(x) = sd^{s-1}(x) (\hat{\s}\cdot \nu) + (1+s) A(x) d^s (\hat{\s}\cdot\nu)+\dots\qquad\text{for $x\in \{u > 0\}$}
\end{equation}
where we recall $\hat{\s} = \s / |\s|$.
Notice that, from the previous two expressions, we have
\begin{equation}
\label{eq.relHu}
\frac{1+s}{s}\partial_\nu\left(\frac{u}{d^s}\right) \frac{u_\s}{d^{s-1}} = \partial_\nu\left(\frac{u_\s}{d^{s-1}}\right), 
\end{equation}
where we are using that $u_\s$ is a large solution, and so the values of $u_\s d^{1-s}$ and $\partial_\nu(u_\s d^{1-s})$ are well-defined on $\partial\Omega = \partial\{u = 0\}$, and they are equal to
\begin{equation}
\label{eq.75is}
\frac{u_\s}{d^{s-1}} = s(\hat{\s}\cdot\nu),\quad \partial_\nu\left(\frac{u_\s}{d^{s-1}} \right) = (1+s) A(x) (\hat{\s}\cdot\nu) \quad\text{on $\partial\Omega$}.
\end{equation}

Let us now consider the stability condition from Proposition~\ref{prop.main_E} in the case $\Lambda = \Gamma(1+s)$. Namely, for any $F$ such that ${\lambda}(F)$, $\gamma_0(F)$, and $\gamma_1(F)$ (recall \eqref{eq.gamma0}-\eqref{eq.gamma1}-\eqref{eq.gamma-1}) are well-defined and $F = 0$ on $(\R^n\setminus \Omega)\times\{0\}$ then 
\begin{equation}
\label{eq.becomes}
\begin{split}
\int_{\partial \Omega} & \gamma_0(F) \gamma_1(F) -\frac{1+s}{s}\int_{\partial \Omega}  \partial_\nu\left(\frac{u}{d^s}\right) \gamma_0(F)^2 -2^{s-1}\frac{1-s}{s}\int_{\partial \Omega} {\lambda}(F) \gamma_0(F) \,\leq  \\
& \qquad\qquad\le - \frac{d_s }{\Gamma(1+s)\Gamma(s)}\int_{\Omega} F \partial_y^a F -\frac{d_s}{\Gamma(1+s)\Gamma(s)}\int_{\{y > 0\}} F L_a F. 
\end{split}
\end{equation}

Take now, as test function $F$, $F = {\bar u}_\s \eta$ for some smooth, compactly supported $\eta$ such that $\eta|_{\{y = 0\}}$ is compactly supported outside of $\{{\s} = 0\}$ (so that, $\partial\Omega$ is smooth on ${\rm supp}~\eta$), $\eta\in C^\infty_c(\overline{\R^{n+1}}\setminus \{\s = y = 0\})$. Recall, also, that $\bar u$ denotes the $a$-harmonic extension of $u$ towards $\{y > 0\}$. Note that such choice of $F$ satisfies the condition that $F/|\nabla \mathcal{U}_s|$ is $C^{1,\alpha}$.

On the one hand, by means of \eqref{eq.relHu}, we have 
\begin{equation}
\label{eq.usethis}
\frac{1+s}{s}\int_{\partial \Omega} \partial_\nu\left(\frac{u}{d^s}\right) \gamma_0(F)^2  = \int_{\partial \Omega} \partial_\nu\left(\frac{u_\s}{d^{s-1}}\right) \frac{u_\s}{d^{s-1}}\eta^2 .
\end{equation}
On the other hand, let us compute ${\lambda}(F)$.  Differentiating the expression $L_a \bar u = 0$ in $\{y > 0\}$ in the $\hat{\s}$ direction, we obtain that 
\begin{equation}
\label{eq.desiredresult}
L_a {\bar u}_\s = \frac{n-2}{\s^2} {\bar u}_\s y^a.
\end{equation}

Thus, $L_a(F) = {\bar u}_\s L_a \eta + \eta L_a {\bar u}_\s+ 2y^a \nabla {\bar u}_\s\cdot \nabla \eta$, and from the definition of ${\lambda}(F)$, \eqref{eq.gamma-1}, and thanks to \eqref{eq.desiredresult} only the last terms survives, 
\[
{\lambda}(F) =\nabla \eta(x)\cdot \lim_{y \downarrow 0} y^{2-s}\nabla {\bar u}_\s(x, y).
\]

In order to compute it, let us consider the expansion of ${\bar u}_\s$ around $\partial\Omega$. Notice that ${\bar u}$ is the $a$-harmonic extension of our original $u$ towards $\{y > 0\}$, so the expansion around $\partial\Omega$ is not simply \eqref{eq.notsimply} and rather we have to consider the variable $y$ as well.

From the proof of Lemma~\ref{lem.form2}, and using the notation there, we have that at first order around free boundary points,
\[
{\bar u}(x, y) = 2^{-s} (\delta  + r)^{s} + \dots
\]
where we recall that $\delta$ denotes the signed distance to $\partial\Omega$ (in the first $n$ variables) and $r$ is the distance in $\R^{n+1}_+$ to $\partial \Omega$, that is, $r = (\delta^2+y^2)^\frac12$. In particular, differentiating the previous expression in the direction $\s$, we obtain an expansion of the derivative around $\partial\Omega$,
\[
{\bar u}_\s(x, y) = s2^{-s} (\delta+r)^{s-1} \hat{\s}\cdot \nabla (\delta +r)+\dots = s2^{-s} \frac{(\delta+r)^{s}}{r} (\hat{\s}\cdot \nu) +\dots,
\]
where we used that $\nabla r = \frac{1}{r}(\delta\nabla \delta, y)$ and $\nabla \delta = \nu$ at first order. Notice that we differentiate the expansion to obtain an expansion for the derivative: indeed, since we are assuming that $\Omega$ is a $C^{2,\alpha}$ domain, the function $\frac{\bar u}{2^{-s}(\delta+r)^s}\in C^{1,\alpha}$. That is, 
$
\bar u = 2^{-s}(\delta+r)^s \phi
$
for some $\phi\in C^{1,\alpha}$. We can now differentiate $\bar u$ to get $\nabla \bar u = s2^{-s}(\delta+r)^{s-1}(\nabla \delta+\nabla r)\phi +2^{-s}(\delta+r)^s\nabla\phi$. Now, the second term is lower order with respect to the first one, so we get the desired expansion. 

Similarly, 
\[
\nabla\eta(x) \cdot \nabla {\bar u}_\s(x, y) = s2^{-s}\nabla \eta \cdot \nu (sr  - \delta)\frac{(\delta+r)^s}{r^3}(\hat{\s}\cdot \nu)+\dots,
\]
so that 
\[
{\lambda}(F) = s^2 2^{-s+1} \eta_\nu (\hat{\s}\cdot \nu)
\]

Thus, \eqref{eq.becomes} becomes, using \eqref{eq.75is}-\eqref{eq.usethis} as well
\begin{equation}
\label{eq.putback}
s^3 \int_{\partial\Omega}\eta\partial_\nu \eta (\hat{\s}\cdot\nu)^2  \le - \frac{d_s}{ \Gamma(1+s)\Gamma(s)}\int_{\Omega} F \partial_y^a F -\frac{d_s}{ \Gamma(1+s)\Gamma(s)}\int_{\{y > 0\}} F L_a F.
\end{equation}

Let us now deal with the right-hand side of the previous expression. For simplicity, we are assuming that
\[
\partial_y^a \eta \equiv 0\quad\text{on}\quad \{y = 0\}
 \] for all $a\in (-1, 1)$ (we will later choose $\eta$ as such). Notice that the first term is then   
 \begin{equation}
 \label{eq.Fvanishes}
 \int_{\Omega} F \partial_y^a F = \int_{\Omega} \eta^2 (u_\s) \partial_y^a({\bar u}_\s) = 0,
 \end{equation}
 since $\partial_y^a({\bar u}_\s) = (\partial_y^a {\bar u})_\s$, and $-d_s \partial_y^a {\bar u} = (-\Delta)^s u$, so that $u_\s (\partial_y^a {\bar u})_\s   = u_\s C((-\Delta)^s u)_\s\equiv 0$ on $\Omega$. For the second term, we have 
 \[
 \int_{\{y > 0\}} F L_a F = \int_{\{y > 0\}} (\eta L_a \eta {\bar u}_\s^2 + 2 {\bar u}_\s \eta \nabla {\bar u}_\s \cdot \nabla \eta y^a + {\bar u}_\s L_a {\bar u}_\s \eta^2).
 \]
 
We notice that the three terms above are integrable, and we would like to integrate by parts the first term. However, such integration, if done directly, would yield non-integrable terms, and we have to be a bit more delicate with this step. Let us, then, integrate by parts the first term above. 

We denote by $E_\eps \coloneqq \{X\in \R^{n+1}_+ : {\rm dist}(X, \partial\Omega) = \eps\}$ and $E_{>\eps} \coloneqq \{X\in \R^{n+1}_+ : {\rm dist}(X, \partial\Omega) > \eps\}$. We then want to compute
\[
\int_{\{y > 0\}} \eta L_a \eta {\bar u}_\s^2 = \lim_{\eps\downarrow 0}\int_{E_{> \eps}} \eta L_a \eta {\bar u}_\s^2.
\]
Notice that, in $E_{> \eps}$, $u_\s$ is smooth and we can integrate by parts the previous expression, to obtain (recall $\partial_y^a \eta = 0$ on $\{y = 0\}$) 
\[
\int_{E_{> \eps}} \eta L_a \eta {\bar u}_\s^2 = -\int_{E_{> \eps}} \nabla \eta\cdot\nabla (\eta {\bar u}_\s^2) y^a + \int_{E_\eps}\vec{n}\cdot\nabla \eta \eta {\bar u}_\s^2 y^a d\mathcal{H}_n,
\]
where the second integral is performed on $E_\eps$ (which has Hausdorff dimension $n$). The vector $\vec{n}$ denotes the outward unit normal vector to $E_{> \eps}$. 


Putting it all together and letting $\eps \downarrow 0$, we have that 
\[
\int_{\{y > 0\}} FL_a F = \int_{\{y > 0\}}\left( {\bar u}_\s L_a {\bar u}_\s \eta^2 - |\nabla \eta|^2 {\bar u}_\s^2y^a\right) + \lim_{\eps \downarrow 0 } \int_{E_\eps}\vec{n}\cdot\nabla \eta \eta {\bar u}_\s^2 y^a d\mathcal{H}_n.
\]

Let us compute this last term.  With this notation, the vector $\vec{n}$ corresponds to $-\nabla r$, and since we were assuming that $\partial_y \eta = 0$, $\vec{n}\cdot \nabla \eta = -\frac{\delta}{r}\nu\cdot\nabla \eta$ at first order. That is, 
\[
\lim_{\eps\downarrow 0} \int_{E_\eps}\vec{n}\cdot\nabla \eta \eta {\bar u}_\s^2 y^ad\mathcal{H}_n = -s^2  2^{-2s}\lim_{\eps\downarrow 0}\int_{E_\eps} \eta  \partial_\nu\eta \delta \frac{(\delta+r)^{2s}}{r^{3}} (\hat{\s}\cdot \nu)^2 y^a.
\]

Notice that $ r \equiv \eps$ on $E_\eps$. We decompose the integral on $E_\eps$ as an integral for $x\in \partial\Omega$ times an integral on $x+\eps (\nu\cos\theta + \hat{y}\sin\theta)$ for $\theta\in (0,\pi)$, where $\hat y = \boldsymbol{e}_{n+1} = (0,\dots,0,1)\in \R^{n+1}$. Such decomposition has Jacobian $\eps$ at leading order, so (also using that $\eta \partial_\nu\eta (\hat \s \cdot\nu)^2$ is smooth)
\begin{align*}
&\lim_{\eps\downarrow 0}  \int_{E_\eps}\vec{n}\cdot\nabla \eta \eta {\bar u}_\s^2 y^a d\mathcal{H}_n \,= \\
&\qquad\qquad = -s^2 2^{-2s} \lim_{\eps\downarrow 0}  \eps^{-3}\int_{\partial \Omega} \eta (x) \partial_\nu \eta(x) (\hat \s\cdot \nu)^2(x) \eps \int_0^\pi \mathcal{F}(x, \theta, \eps)\,d\theta\, d\sigma(x),
\end{align*}
where, if we denote by $f_0$ the function $f_0(x, y) = \delta (\delta + r)^{2s} y^a$, then 
\[
\mathcal{F}(x, \theta, \eps) = f_0 (x+\eps (\nu\cos\theta + \hat{y}\sin\theta)).
\]
In particular, at leading order we have that $\delta(x+\eps (\nu\cos\theta + \hat{y}\sin\theta)) = \eps\cos(\theta)$ and $r = \eps$ so 
\[
\mathcal{F}(x, \theta, \eps) = \eps^2 \cos(\theta) (1+\cos(\theta))^{2s} \sin(\theta)^{1-2s}	.
\]

Now we compute 
\[
\int_0^\pi \cos(\theta) ( 1+\cos(\theta))^{2s} \sin(\theta)^{1-2s} d\theta = \frac{2\pi s^2}{\sin(\pi s)}.
\]
(We used Mathematica 11.2 again to do this computation.) Putting everything together, we obtain that 
\[
\lim_{\eps\downarrow 0}  \int_{E_\eps}\vec{n}\cdot\nabla \eta \eta {\bar u}_\s^2 y^a d\mathcal{H}_n = -2^{-2s}\frac{2\pi s^4}{\sin(\pi s)}\int_{\partial\Omega}\eta_\nu\eta (\hat{\s}\cdot \nu)^2
\]
and therefore, 
\[
\int_{\{y > 0\}} FL_a F = \int_{\{y > 0\}}\left( {\bar u}_\s L_a {\bar u}_\s \eta^2 - |\nabla \eta|^2 {\bar u}_\s^2y^a\right) -2^{-2s}\frac{2\pi s^4}{\sin(\pi s)}\int_{\partial\Omega}\eta_\nu\eta (\hat{\s}\cdot \nu)^2.
\]
Putting it back in \eqref{eq.putback} and recalling that \eqref{eq.Fvanishes}, 
\[
\int_{\{y > 0\}} |\nabla \eta|^2 {\bar u}_\s^2y^a 
\ge \int_{\{y > 0\}}{\bar u}_\s L_a {\bar u}_\s \eta^2 
\]
for all $\eta \in C^\infty_c(\overline{\R^{n+1}}\setminus \{\s = y = 0\})$ such that the previous expression is well-defined on both sides, and with $\partial_y^a \eta = 0$ on $\{y = 0\}$. We have used that (recalling \eqref{eq.ds})
 \[
 s^3 = \frac{d_s}{\Gamma(1+s)\Gamma(s)} 2^{-2s}\frac{2\pi s^4}{\sin(\pi s)}. 
 \]
Using now \eqref{eq.desiredresult} we get the desired result.

 We end up by noticing that, from the regularity of $\eta$, if $\partial_y \eta = 0$ on $\{y = 0\}$ then $\partial_y \eta = 0$ on $y = 0$ for all $a\in (-1, 1)$. Also, the fact that $\eta$ is compactly supported outside of $\{\s = y = 0\}$ can also be removed by standard arguments using the axial symmetry of $u$ (see \cite[Proof of Proposition 1.3]{San18} and also \cite[Proof of Theorem 1.7]{FR19}).
\end{proof}

Let us now use an appropriate test function in the expression \eqref{eq.stab_E} to deduce properties of axially-symmetric global stable solutions. We will apply such result to cones, so from now on we will assume that $u$ is a $C^s$ global solution (local minimizer or stable solution)  which is $s$-homogeneous. 
In particular, $\bar u$ is also $s$-homogeneous, $u_\s$ is $(s-1)$-homogeneous and ${\bar u}_\s$ is $(s-1)$-homogeneous as well. 

\begin{proof}[Proof of Theorem~\ref{thm.low-dim}]
Let $\alpha>0$ to be fixed. For any $\eps > 0$ and $R > 1$, let us define $\eta_{\eps, R}$ as 
\[
  \eta_{\eps, R} = \left\{ \begin{array}{ll}
  \s^{-\alpha}\rho_R & \textrm{ if } \s > \eps\\
  \eps^{-\alpha}\rho_R  & \textrm{ if } \s \le \eps
  \end{array}\right.,\qquad \rho_{R}(X) = \left\{ \begin{array}{ll}
  1 & \textrm{ in } B_R^+\\
  0 & \textrm{ in } \R^{n+1}_+ \setminus B^+_{2R}
  \end{array}\right.,
\]
where $\rho_R \in C^\infty_c(\overline{\R^{n+1}_+})$, $\rho_R\ge 0$, is a smooth, radial, non-increasing function such that $|\nabla \rho_R|\le C R^{-1}$ for some fixed universal constant $C$. We have denoted here, as an abuse of notation, 
\[
B_\rho^+ \coloneqq \{(x, y)\in \R^n\times \R: |x|^2+y^2 \le \rho\quad\text{and}\quad y \ge 0\}.
\]

Then, 
\[
  |\nabla \eta_{\eps, R}|^2 \le \left\{ \begin{array}{ll}
  \alpha^2 \s^{-2\alpha-2} \rho_R^2 & \textrm{ in } B_R^+\cap \{\s > \eps\}\\
    \alpha^2(1+\delta) \s^{-2\alpha-2} \rho_R^2 + C_\delta \s^{-2\alpha}|\nabla \rho_R|^2 & \textrm{ in } B^+_{2R}\setminus B^+_R\cap \{\s > \eps\}\\
  \eps^{-2\alpha}|\nabla \rho_R|^2  & \textrm{ if } \s \le \eps.
  \end{array}\right.
\]
for any $\delta > 0$, and where $C_\delta$ is a constant depending on $\delta$. 

Let us now use $\eta = \eta_{\eps, R}$ as a test function in \eqref{eq.stab_E} (notice that we can do so by approximation, since $\eta_{\eps, R}$ is Lipschitz). On the right-hand side of \eqref{eq.stab_E} we get 
\begin{equation}
\label{eq.combine0}
\int_{\{y > 0\}} {\bar u}_\s^2\eta^2\s^{-2} y^a\, dx  = \int_{\{\s > \eps\}\cap B_{2R}^+} {\bar u}_\s^2 \s^{-2\alpha-2}\rho^2_R y^a\, dx + \eps^{-2\alpha} \int_{\{\s\le \eps\}\cap B^+_{2R}} {\bar u}_\s^2 \rho_R^2 y^a. 
\end{equation}
On the other hand, on the left hand-side we have
\begin{align*}
\int_{\{y > 0\}} {\bar u}_\s^2|\nabla \eta|^2 & y^a\, dx \le  \alpha^2(1+\delta)\int_{\{\s > \eps\}\cap B_{2R}^+} {\bar u}_\s^2 \s^{-2\alpha-2} \rho_R^2 y^a\, dx\,+ \\
& + C_\delta R^{-2}\int_{B_{2R}^+\setminus B_R^+\cap \{\s > \eps\}} {\bar u}_\s^2 \s^{-2\alpha} y^a
 + C \eps^{-2\alpha}R^{-2}\int_{\{\s \le \eps\}\cap B^+_{2R}}{\bar u}^2_\s y^a\, dx.
\end{align*}
where we have used that $|\nabla \rho_R|\le C R^{-1}$. Combining this with \eqref{eq.combine0} and \eqref{eq.stab_E} we deduce that (also notice that $\nabla \rho_R$ vanishes on $B_R^+$)
\begin{equation}
\label{eq.boundtwoterms}
\begin{split}
(n-&2-\alpha^2(1+\delta))\int_{\{\s > \eps\}\cap B_{2R}^+} {\bar u}_\s^2 \s^{-2\alpha-2}\rho^2_R y^a\, dx\,\leq \\
& \le C_\delta R^{-2}\int_{B_{2R}^+\setminus B_R^+\cap \{\s > \eps\}} {\bar u}_\s^2 \s^{-2\alpha} y^a  + CR^{-2} \eps^{-2\alpha}\int_{B^+_{2R}\setminus B_R^+\cap \{\s \le \eps\}}{\bar u}^2_\s y^a\, dx.
\end{split}
\end{equation}

In particular, for the previous inequality to be useful we will assume, from now on, that
\[
n > 2+\alpha^2
\]
(and we can choose $\delta$ appropriately).

Let us bound the two terms on the right-hand side of \eqref{eq.boundtwoterms}. We start with the second term, by scaling the integral and using that $\bar u_\s$ is $(s-1)$-homogeneous (recall that $a = 1-2s$):
\[
\int_{B^+_{2R}\setminus B_R^+\cap \{\s \le \eps\}}{\bar u}^2_\s y^a\, dx = R^n \int_{B^+_{2}\setminus B_1^+\cap \{\s \le \eps/R\}}{\bar u}^2_\s y^a\, dx.
\]
Notice now that $\bar u_\s^2$ is bounded in the region $B^+_{2}\setminus B_1^+\cap \{\s \le \eps/R\}$ if $\eps/R$ is small enough. Indeed, for $\eps/R$ small enough, $B_2^+\setminus B_1^+\cap \partial \Omega = \varnothing$ (we are fixing the function $u$) and if $\{y = 0\}$ we have that either $u_\s\equiv 0$ in this set, or it is $s$-harmonic; then, for $\{y > 0\}$ we can use classical estimates for $L_a$-extensions (see, for example, \cite{JN17}). 

Thus, the last integral can just be bound by $ |B^+_{2}\setminus B_1^+\cap \{\s \le \eps/R\}| = C \eps^{n-1} R^{-n+1}$ and we get 
\[
 CR^{-2} \eps^{-2\alpha}\int_{B^+_{2R}\setminus B_R^+\cap \{\s \le \eps\}}{\bar u}^2_\s y^a\, dx \le C\eps^{n-1-2\alpha} R^{-1}.
\]
On the other hand, 
\[
\int_{B_{2R}^+\setminus B_R^+\cap \{\s > \eps\}} {\bar u}_\s^2 \s^{-2\alpha} y^a  = R^{n-2\alpha}\int_{B_{2}^+\setminus B_1^+\cap \{\s > \eps/R\}} {\bar u}_\s^2 \s^{-2\alpha} y^a.
\]

We now notice that the last integral is bounded by a constant independently of $\eps$ (and therefore, we can let $\eps\downarrow 0$). Indeed, we want to show that
\[
\int_{B_2^+\setminus B_1^+} \bar u_\s^2\s^{-2\alpha}y^a\le C
\] 
for some $C$ that might depend on $u$. 

We separate the integral for $\{\s > c\}$ and $\{\s \le c\}$, for some $c>0$ small enough such that $B_2^+\setminus B_1^+\cap \{\s \le c\} \cap \partial\Omega = \varnothing$. As before, in this case, $\bar u_\s$ is bounded in $B_2^+\setminus B_1^+\cap \{\s \le c\} $ so we only need to show that 
\[
\int_{B_2^+\setminus B_1^+} \s^{-2\alpha} y^a \le C
\]
for some $C$. Notice now that, by using $dx dy \mapsto \s^{n-2} d\s dt dy$ we have
\[
\int_{B_2^+\setminus B_1^+} \s^{-2\alpha} y^a \le C\int_0^1 y^a\, dy \int_0^1 \s^{-2\alpha+n-2}\, d\s,
\]
which is bounded if $a > -1$ and $-2\alpha+n-2 > -1$. The first part always holds, and the second part is true since by assumption $n > 2+\alpha^2 \ge 1+2\alpha$.

On the other hand, for $\{\s > c\}$ we just need to show that 
\[
\int_{B_2^+\setminus B_1^+} \bar u_\s^2y^a \le C.
\]
This is just a localised $W^{1,2}(y^a dX)$ norm (the extended $W^{s,2}$ norm), which is bounded for solutions.

Putting everything together, we have obtained that
\[
(n-2-\alpha^2(1+\delta))\int_{\{\s > \eps\}\cap B_{2R}^+} {\bar u}_\s^2 \s^{-2\alpha-2}\rho^2_R y^a\, dx \le C\eps^{n-1-2\alpha} R^{-1} + CR^{n-2\alpha-2}.
\]

Since $n > 2+\alpha^2$, we have $n-1-2\alpha> 0$ and letting $\eps\downarrow 0$ we get 
\[
\int_{B_{R}^+} {\bar u}_\s^2 \s^{-2\alpha-2} y^a\, dx \le CR^{n-2\alpha-2}.
\]

Now, if $n -2\alpha -2 < 0$, by letting $R\to \infty$ we obtain that $\bar u_\s\equiv 0$ in $\R^{n+1}_+$, so that $u_\s \equiv 0$. That is, $u$ depends only on $t$ and it is one-dimensional. We just need to check for which $n$ the previous conditions can be satisfied for some $\alpha > 0$. 

We have that
\[
2\alpha> n-2 >\alpha^2 \quad\Longrightarrow\quad \left(\frac{n-2}{2}\right)^2 < n-2,
\]
which holds for $2 < n < 6$, with $n = 6$ being the critical case. The case $n = 2$ has been already shown in Corollary~\ref{cor.2D}, so $u$ is one-dimensional for $n \le 5$. 
\end{proof}

\section{Numerical stability condition for axially symmetric cones}
\label{sec.7}

Let us consider $\mathcal{C}_{s, n}$ the axially symmetric cone
\[
\mathcal{C}_{s, n} \coloneqq \{x = (x', x_n)\in \R^{n-1}\times\R : |x'|> \beta_{n,s} |x_n| \}
\]
where $\beta_{n,s} > 0$ is the unique constant such that there exists an $s$-homogeneous solution $u_s$ to
\[
\left\{
\begin{array}{rcll}
\fls u_s & = & 0 & \quad\textrm{in } \mathcal{C}_{s, n}\\
u_s & = & 0 & \quad\textrm{in } \R^n\setminus \mathcal{C}_{s, n} \vspace{1mm} \\
\displaystyle\frac{u_s}{d^{s}} & = & 1& \quad \textrm{on }\partial \Co_{s, n}.
\end{array}
\right.
\]

In this section, we will study what is the expression of the stability condition from Theorem~\ref{thm.main} when applied to $u_s$, for radial functions $f = f(r)$, $r = |x|$. We already know that cones $\mathcal{C}_{s, n}$ are unstable for $n \le 5$ by Theorem~\ref{thm.low-dim}, and we expect them to be stable for $n \ge 7$. We believe that this approach might be useful to understand the case $n = 6$, in which we expect axially-symmetric cones to be unstable (given that the previous proposition seemed to hold until $n = 6-\delta$ for any $\delta > 0$ independently of $s\in (0, 1)$, and the result holds for $s = 1$.

We do so by finding an inequality that can be numerically checked, whose validity would imply the instability of the conical solution $u_s$. We refer to \cite{CJK04} for the analogous result for $s=1$. 

Let us denote, from now on, $\mathcal{C} \coloneqq \mathcal{C}_{n, s}$. The stability condition \eqref{eq.stabilityFin} when applied to the cone $\mathcal{C}$ is 
\begin{equation}
\label{eq.stab_fg}
\int_{\partial\Co} H_{\Co}  f^2 d\sigma \le \int_{\partial\Co\times\partial\Co} \left(f(x) - f(y)\right)^2\mathcal{K}_{\Co}(x, y)d\sigma(x)d\sigma(y)
\end{equation}
with
\[
H_{\mathcal{C}}(x) \coloneqq \int_{\partial \Co} |\nu(x) - \nu(y)|^2 \mathcal{K}_{\Co}(x, y) d\sigma(y),
\]
where $\mathcal{K}_\Co(x, y)$ is our boundary kernel, obtained from the Green function $G_{\Co}(x, y)$ as 
\[
\mathcal{K}_\Co (x, y) =  \lim_{\substack{\Co\ni \bar x\to x\\\Co\in \bar y \to y }} \frac{G_{\Co}(\bar x ,\bar y)}{d^s(\bar x)d^s(\bar y)}. 
\]

Notice, moreover, that from the symmetry of our problem and the fact that $\mathcal{K}_\Co$ is $(-n)$-homogeneous, $H_\Co(\lambda x) = \lambda^{-1}H_\Co(x)$, so
\[
H_\Co(x) = \frac{H_1}{|x|},\qquad H_1 = \int_{\partial\Co}|\nu(x_1) - \nu(y)|^2\mathcal{K}_\Co(x_1, y)d\sigma(y)
\]
where $x_1\in \partial\Co\cap\partial B_1$ is arbitrary.

Let $f = f(|x|) = f(r)$ be given by $f(r) = r^{\frac{2-n}{2}} g(r)$ for some $g$. Then, the left-hand side of \eqref{eq.stab_fg} can be rewritten as
\[
\int_\Co H_\Co f^2 d\sigma = H_1 |\partial\Co \cap \mathbb{S}^{n-1}|\int_0^\infty g^2(r) \frac{dr}{r},
\]
where $|\partial\Co \cap \mathbb{S}^{n-1}|$ is explicit depending only on the angle $\beta_{n, s}$ in the definition of $\Co = \Co_{n, s}$ (and $n$). 

For the fractional semi-norm part, let us define for any $x\in \Co$ with $|x| = r$,
\[
\tilde{\mathcal{K}}_{\Co}:\R_+\times\R_+ \to \R,\qquad \tilde{\mathcal{K}}_{\Co}(r, t) \coloneqq t^{2-n} \int_{|y| = t} \mathcal{K}_\Co(x, y) d\sigma(y)
\]
(notice that in this definition, the value is independent of $x$, as long as $|x| = r$, by symmetry).
Then, 
\begin{align*}
\int_{\partial\Co}\int_{\partial\Co} & \big(f(x) - f(y)\big)^2\mathcal{K}_{\Co}(x, y)d\sigma(x)d\sigma(y) =\\
\quad &= 2\int_{\partial\Co} f(x) \int_{\partial\Co} \left(f(x) - f(y)\right)\mathcal{K}_{\Co}(x, y)d\sigma(x)d\sigma(y)\\
\quad & = 2\int_0^\infty \int_{|x| = r} f(r) \int_0^\infty t^{n-2}(f(r) - f(t))\tilde{\mathcal{K}}_\Co(r, t) dt\, dr\\
\quad & = 2|\partial\Co \cap \mathbb{S}^{n-1}|\int_0^\infty f(r)r^{n-2} \int_0^\infty t^{n-2}(f(r) - f(t))\tilde{\mathcal{K}}_\Co(r, t) dt\, dr\\
\quad & = 2|\partial\Co \cap \mathbb{S}^{n-1}|\int_0^\infty g(r) r^{\frac{n}{2}}\int_0^\infty t^{n-2}(g(r)r^{\frac{2-n}{2}} - g(t)t^{\frac{2-n}{2}})\tilde{\mathcal{K}}_\Co(r, t) dt\, \frac{dr}{r}.
\end{align*}

If we denote 
\[
(\tilde \Lambda g )(r) \coloneqq 2r^{\frac{n}{2}}\int_0^\infty t^{n-2}(g(r)r^{\frac{2-n}{2}} - g(t)t^{\frac{2-n}{2}})\tilde{\mathcal{K}}_\Co(r, t) dt
\]
then our condition \eqref{eq.stab_fg} is 
\[
H_1 \int_0^\infty g^2(r) \frac{dr}{r} \le \int_0^\infty g(r) \tilde \Lambda g (r) \frac{dr}{r}.
\]
We now want to apply the Mellin transform ($\tilde h(\xi)  \coloneqq \int_0^\infty h(r) r^{-i\xi}\frac{dr}{r}$) and use Plancherel's theorem. To do so, notice that $\tilde \Lambda$ is invariant under dilations, or $0$-homogeneous ($[\Lambda g(\lambda \cdot) ](r) = [\Lambda g] (\lambda r)$), and so it is represented by a Fourier-Mellin multiplier operator on $L^2(\R^+, dr/r)$, with symbol that we denote $m(\xi)$ (cf. \cite{CJK04}). Using Plancherel's theorem to the previous inequality we have 
\[
H_1\int_{-\infty}^\infty|\tilde g(\xi)|^2d\xi \le \int_{-\infty}^\infty|m(\xi)||\tilde g(\xi)|
\]
for all $\tilde g$ such that $g\in C^\infty_0(0, \infty)$. This class is dense in $L^2(\R_+, dr/r)$ and thus we have
\[
H_1\le \inf_{\xi}|m(\xi)|\le m(0). 
\] 
The value of $m(0)$ can be computed as $\tilde \Lambda (1)$ (the operator $\tilde \Lambda$ applied to the constant function equal to 1), which by homogeneity is constant (again, cf. \cite{CJK04}), and so 
\[
m(0) = [\tilde \Lambda(1)](1)  =2\int_0^\infty t^{n-2}\big(1-t^{\frac{2-n}{2}}\big)\tilde{\mathcal{K}}_\Co(1, t)\, dt. 
\]
Finally, notice that $\tilde{\mathcal{K}}_\Co(1, t) = t^{-n} \tilde{\mathcal{K}}_\Co(1, 1/t)$, so
\begin{align*}
m(0) & = 2\int_0^1 t^{n-2}\big(1-t^{\frac{2-n}{2}}\big)\tilde{\mathcal{K}}_\Co(1, t)\, dt+2\int_0^1 t^{2-n}\big(1-t^{\frac{n-2}{2}}\big)t^{n}\tilde{\mathcal{K}}_\Co(1, t)\, \frac{dt}{t^2}\\
& = 2\int_0^1\big(1-t^{\frac{n-2}{2}}\big)^2 \tilde{\mathcal{K}}_\Co(1, t)\, dt. 
\end{align*}

In all, we need to check for which $n$ does the following inequality fail, to deduce that for those $n$ the corresponding cones is unstable. 
\[
\int_{\partial\Co}|\nu(x_1) - \nu(y)|^2\mathcal{K}_\Co(x_1, y)d\sigma(y)\le 2\int_0^1\big(1-t^{\frac{n-2}{2}}\big)^2 \tilde{\mathcal{K}}_\Co(1, t)\, dt,
\]
which can be numerically computed once the aperture $\beta_{n, s}$ and the corresponding Green function $G_{\mathcal{C}}$ are computed. As said before, we already know the cases $n \le 5$, so we are only interested in the case $n = 6$.

\end{document}